\def\dis{\displaystyle}
\def\0vec{\mathbf{0}}
\def\jjnt{\dis\int\!\!\!\!\int}
\def\\Phivec{\mathbf{\Phi}}
\newcommand{\Fin}{\hfill$\Box$}
\def \black {\color{black}}
\newcommand{\R}{\mathbb R}
\newcommand{\iio}{\int_0^T\!\!\!\!\int_{\omega_{0}}}
\newcommand{\iil}{\int_0^T\!\!\!\!\int_{\omega}}
\newcommand{\be}{\begin{equation}}
\newcommand{\ee}{\end{equation}}
\newcommand{\ba}{\begin{eqnarray}}
\newcommand{\ea}{\end{eqnarray}}
\let \dis \displaystyle
\let \ol \overline
\let \bb \mathbb
\let \cal \mathcal
\let \vp \varphi
\title{\bf Exact controllability to the trajectories of the one-phase Stefan problem}
\author{Jon Asier Bárcena-Petisco\thanks{Department\ of Mathematics, University\ of the Basque Country, Barrio Sarriena s/n, 48940 Leioa, Spain. E-mail: {\tt jonasier.barcena@ehu.eus}} \and Enrique Fernández-Cara\thanks{University of Sevilla, Dep.\ EDAN  and IMUS, Univ.\ of Sevilla, Aptdo.~1160, 41080 Sevilla, Spain. E-mail: {\tt cara@us.es}} \and Diego A.  Souza\thanks{University of Sevilla, Dep.\ EDAN  and IMUS, Univ.\ of Sevilla, Aptdo.~1160, 41080 Sevilla, Spain. E-mail: {\tt desouza@us.es}}}
\begin{document}
\date{}

\maketitle

\numberwithin{equation}{section}
\newtheorem{theorem}{Theorem}
\numberwithin{theorem}{section}
\newtheorem{proposition}[theorem]{Proposition}
\newtheorem{conjecture}{Conjecture}
\newtheorem{fact}[theorem]{Fact}
\newtheorem{lemma}[theorem]{Lemma}
\newtheorem{step}{Step}
\newtheorem{corollary}[theorem]{Corollary}
\theoremstyle{remark}
\newtheorem{rmq}[theorem]{Remark}
\newtheorem{definition}[theorem]{Definition}
\newtheorem{example}[theorem]{Example} 
\newtheorem{hypothesis}{Hypothesis}
%\paragraph{}

% ===========================================================
% ABSTRACT
% ===========================================================

\begin{abstract}
        This paper deals with the exact controllability to the trajectories of the one--phase Stefan problem in  one spatial dimension. 
        This is a free-boundary problem that models solidification and melting processes.
        It is assumed that the physical domain is filled by a medium whose state is liquid on the left and solid, with constant temperature, on the right.
        In between we find a free-boundary (the interface that separates the liquid from the solid).
        In the liquid domain, a parabolic equation completed with initial and boundary conditions must be satisfied by the temperature.
        On the interface, an additional free-boundary requirement, called the {\it Stefan condition,} is imposed.
        We prove the local exact controllability to the (smooth) trajectories.
        To this purpose, we first reformulate the problem as the local null controllability of a coupled PDE-ODE system with  distributed controls.
        Then, a new Carleman inequality for the adjoint of the linearized PDE-ODE system, coupled on the boundary through nonlocal in space and memory terms, is presented.
        This leads to the null controllability of an appropriate linear system.
        Finally, a local result is obtained via local inversion, by using {\it Liusternik-Graves' Theorem}. 
        As a byproduct of our approach, we find that some parabolic equations which contains memory terms localized on 
        the boundary are null-controllable.
\end{abstract}

\noindent {\bf Keywords:} Free-boundary problems, one-phase Stefan problem, exact controllability to the trajectories, global Carleman inequalities, Inverse Function Theorem.
%exact controllability.
\vskip 0.25cm\par\noindent
\noindent {\bf Mathematics Subject Classification:} 35R35, 80A22, 93B05, 93C20

\tableofcontents

%%%%%%%%%%%%%%%%%%%%%%%%%%%%%%
%%%%%%%%%%%%%%%%%%%%%%%%%%%%%%
%%%%%%%%%%%%%%%%%%%%%%%%%%%%%%
%%%%%%    INTRODUCTION       %%%%%%%%%%%%
%%%%%%%%%%%%%%%%%%%%%%%%%%%%%%
%%%%%%%%%%%%%%%%%%%%%%%%%%%%%%
%%%%%%%%%%%%%%%%%%%%%%%%%%%%%%

\section{Introduction}

	The phenomena of melting and solidification occurs in a plenty of situations in nature and industry, from melting and freezing of polar ice sheets
	to the continuous casting of steel, {\black see for instance~\cite{saguez}.} The mathematical formulation describing this thermodynamical model of liquid-solid phase transition is known 
	as the {\it Stefan problem}, named after the work of the Slovene physicist and mathematician {\it Josef Stefan}. In such a problem, the model involves a 
	moving free boundary, i.e.\ the spatial physical domain is time-dependent. Physically, in the Stefan problem the dynamics of the liquid-solid
	interface is influenced by the heat flux induced by melting or solidification. Mathematically, the time-evolution of the liquid-solid
	interface is modeled through a nonlinear ordinary differential equation. Among other situations, Stefan problem has also been employed to model 
	population dynamics that describe  {\black tumor growth process~\cite{MR1684873}} and information diffusion in online {\black social networks~\cite{MR2997373}.} 
	 
	 For the sake of completeness, we will give a short description of the mathematical formulation of the Stefan problem. A detailed presentation is given  {\black for instance in~\cite{gupta}.}
	
	Let $\ell_* \in \mathbb{R}_{>0}$ be given.
	At each time $t$, the material domain is separated in two parts:
	the set $(0, \ell(t))$ (the liquid phase domain) and the set $(\ell(t), +\infty)$ (the solid phase domain).
	Here, $\ell = \ell(t)$ indicates the position of the interface;
	it must satisfy $\ell(0) = \ell_0$ and $\ell(t) \in (\ell_*,+\infty)$ at least for all small times, where $\ell_0$ and $\ell_*$ are given and $\ell_0>\ell_*$. 
	Hereafter, for any $T>0$ and any $\ell\in C^0([0,T];\mathbb{R}_{>0})$, we set
$$
	Q_\ell:=\{(x,t): t\in(0,T), \ \ x\in(0,\ell(t))\}~~
	\hbox{and}~~
	H^{1,2}(Q_\ell):=\{u\in L^2(Q_\ell) : u_x,u_{xx},u_t\in L^2(Q_\ell)\}.
$$
	
	This paper deals with the  controllability properties of the following {\it one-phase Stefan problem}:      
\begin{equation}\label{eq:conStefan}
\left\{
\begin{array}{lcl}
u_t-u_{xx}=0& \mbox{ in }& Q_\ell,\\
\noalign{\smallskip}\displaystyle
u(0,t)=v(t)& \mbox{ in }&(0,T),\\
\noalign{\smallskip}\displaystyle
u(\ell(t),t)=0& \mbox{ in }&(0,T),\\
\noalign{\smallskip}\displaystyle
\beta\ell_t(t)=-u_x(\ell(t),t)& \mbox{in} &(0,T),\\
\noalign{\smallskip}\displaystyle
\ell(0)=\ell_0, &&\\
\noalign{\smallskip}\displaystyle
u(\cdot\,,0)=u_0& \mbox{ in }&(0,\ell_0) .
\end{array}
\right.
\end{equation}

	Here, $\beta$ is the so called Stefan number (a positive constant) and the initial state $u_0\in H^1(0,\ell_0)$ satisfies $u_0(x) \geq 0$ for all $x\in[0,\ell_0]$ and $u_0(\ell_0)=0$.
	The functions $u = u(x,t)$ and $v = v(t)$ may be respectively viewed as the temperature of the liquid phase and the imposed temperature on the left.
	In~\eqref{eq:conStefan}, $v$ is the {\it control} (devised for heating or freezing the liquid) and~$(u,\ell)$ is the {\it state.}

	In this paper, the objective is to prove the local exact controllability of \eqref{eq:conStefan} to the (smooth) trajectories at time $T>0$.
	By definition, a trajectory of~\eqref{eq:conStefan} is a triplet  $(\bar u,\bar \ell, \bar v)$ belonging to $H^{1,2}(Q_{\bar \ell})\times H^1(0,T)\times H^{3/4}(0,T)$ satisfying
\begin{equation}\label{eq:freeStefan}
\left\{
\begin{array}{lcl}
\bar u_t-\bar u_{xx}=0& \mbox{ in }& Q_{\bar \ell},\\
\noalign{\smallskip}\displaystyle
\bar u(0,t)=\bar v(t)& \mbox{ in }&(0,T),\\
\noalign{\smallskip}\displaystyle
\bar u(\bar \ell(t),t)=0& \mbox{ in }&(0,T),\\
\noalign{\smallskip}\displaystyle
\beta\bar \ell_t(t)=-\bar u_x(\bar \ell(t),t)& \mbox{in} &(0,T),\\
\noalign{\smallskip}\displaystyle
\bar \ell(0)=\bar \ell_0, &&\\
\noalign{\smallskip}\displaystyle
\bar u(\cdot\,,0)=\bar u_0& \mbox{ in }&(0,\bar \ell_0),
\end{array}
\right.
\end{equation}
	where $\bar \ell_0>\ell_*$, $\bar\ell(t) \in (\ell_*,+\infty)$  for all $t\in[0,T]$, $\bar u_0\in H^1(0,\bar \ell_0)$, $\bar u_0(x)\geq0$ for all $x\in[0,\ell_0]$, $\bar u_0(\ell_0)=0$, $\bar v(t)\geq 0$ for all $t\in[0,T]$ and the compatibility condition $\bar u_0(0)=\bar v(0)$ holds.
	
	We will denote by $\mathcal{T}$ the space of triplets  $(\bar u,\bar \ell, \bar v) \in H^{1,2}(Q_{\bar \ell})\times H^1(0,T)\times H^{3/4}(0,T)$ such that the function $(y,t) \mapsto \bar u(y\bar \ell(t),t)$ belongs to~$W^{1,\infty}(0,T;H^{1}(0,1))$ and~$\bar\ell \in W^{1,\infty}(0,T)$.
	Our main result is the following:
	
\begin{theorem}\label{onephase}
	%Let $T>0$.and $\ell_*>0$ be given,	% and suppose that ${\black \bar u_{xt}\in L^2(Q_{\bar \ell})}$.
	Let $(\bar u,\bar \ell, \bar v)$ be a trajectory of~\eqref{eq:conStefan} {\black with $(\bar u,\bar \ell) \in \mathcal{T}$ and  $\bar v(t)>0$ for all $t\in[0,T]$.}
	Then, there exists $\delta > 0$ with the following property:
	for any $\ell_0\in(\ell_*,+\infty)$ and any $u_0\in H_0^1(0,\ell_0)$ with $u_0(x) \geq 0$ for all $x\in[0,\ell_0]$ satisfying
        \begin{equation}\label{1.2p}
|\ell_0-\bar \ell_0|+\|\ell_0 u_0(\cdot\,\ell_0)-\bar \ell_0 \bar u_0(\cdot\,\bar{\ell}_0)\|_{H^1(0,1)} \leq \delta,
        \end{equation}
   	there exist {\black nonnegative controls $v\in  H^{3/4}(0,T)$} and associated states  $(u,\ell)$ with
\[
     	u\in H^{1,2}(Q_\ell),~\ell\in H^1(0,T)\,~\text{and}\,~\ell(t) \in (\ell_*,+\infty)~~\forall t\in [0,T]
\]
such that 
        \begin{equation}\label{am2}
 \ell(T) = \bar\ell(T)~~\mbox{and}~~ u(\cdot\,,T) = \bar u(\cdot\,,T)~~\mbox{in}~~(0,\bar\ell(T)).
        \end{equation}
\end{theorem}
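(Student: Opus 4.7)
My first step would be to remove the free boundary via the standard rescaling $y = x/\ell(t)$: setting $w(y,t) := u(y\ell(t),t)$, the system \eqref{eq:conStefan} is converted into a coupled PDE--ODE system on the fixed cylinder $(0,1)\times(0,T)$, where $w$ solves a parabolic equation with $\ell$-dependent coefficients and an extra transport term $y\ell_t w_y/\ell$, and $\ell$ satisfies the ODE $\beta\ell\ell_t = -w_y(1,t)$. The same rescaling applied to the trajectory produces $(\bar w,\bar\ell)$, and writing $z := w - \bar w$, $h := \ell - \bar\ell$ turns the exact controllability to the trajectory into a local null controllability statement ($z(\cdot,T)=0$, $h(T)=0$) for a nonlinear system for $(z,h)$ with boundary control $v - \bar v$ acting at $y=0$. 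As announced in the abstract, it is then convenient to replace this boundary control by a distributed source $f$ supported in a small subinterval $\omega_0$ near $y=0$, after extending the problem to a slightly larger fixed interval.

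\textbf{Linearization and a Carleman inequality for the adjoint.} Next I would linearize around $(\bar w,\bar\ell)$: the equation for $z$ is a parabolic PDE with smooth trajectory-dependent coefficients, coupled at $y=1$ to the linearized Stefan ODE $\beta\bar\ell h_t + \beta\bar\ell_t h = -z_y(1,t) - \bar w_y(1,t) h/\bar\ell$. To obtain null controllability of this linear system, I would pass to its adjoint and prove a global Carleman inequality for it. Because the adjoint ODE at $y=1$ is backward, integrating it explicitly produces a term that appears in the adjoint parabolic equation as a nonlocal-in-space, memory-in-time boundary contribution at $y=1$, precisely the structure highlighted in the abstract. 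I would start from a standard Fursikov--Imanuvilov weight degenerating at $t=0$ and $t=T$, apply the usual parabolic Carleman estimate, and then absorb these boundary/memory terms by trace inequalities together with a careful use of the fact that the weight decays sharply as $t\to T$ (so the memory integrals can be dominated by the observation on $\omega_0$). Dualising yields null controllability of the linearized system in weighted Hilbert spaces quantitatively compatible with the nonlinear step below.

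\textbf{Nonlinear step via Liusternik--Graves.} Finally, I would encode the nonlinear problem as an equation $\mathcal{F}(z,h,f) = (u_0 - \bar u_0,\ell_0 - \bar\ell_0)$ between Banach spaces chosen so that $\mathcal{F}$ is $C^1$, the constraints $z(\cdot,T)=0$, $h(T)=0$ are built into the state space through weights that blow up at $t=T$ (the Fursikov--Imanuvilov device), and so that the quadratic nonlinearities of the rescaled system are controlled by the weighted linear norms. The derivative $D\mathcal{F}(0,0,0)$ is exactly the linearized map whose surjectivity with bounded right inverse was produced in the previous step. Liusternik--Graves' theorem then provides, for all data satisfying the smallness condition \eqref{1.2p}, a triplet $(z,h,f)$ with $\mathcal{F}(z,h,f) = 0$; undoing the change of variables and re-expressing the distributed control as a boundary trace yields the desired $(u,\ell,v)$. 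Nonnegativity of $v$ follows from $\bar v > 0$ on $[0,T]$ and the smallness of the perturbation.

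\textbf{Main obstacle.} The hard part will be the Carleman inequality for the adjoint: the coupling at $y=1$ is not local but carries a memory in time inherited from the Stefan condition, and absorbing these boundary terms within the parabolic Carleman framework, without degrading the weights needed for both observability and the nonlinear inversion, is the delicate point. A secondary but genuine difficulty is choosing the functional setting so that $\mathcal{F}$ is $C^1$ despite the quadratic coupling terms produced by the rescaling, while keeping the linear right inverse bounded.
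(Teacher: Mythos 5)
Your proposal follows essentially the same route as the paper: rescaling to the fixed cylinder, passage to a distributed control on an extended interval to gain regularity of the boundary control, a new Carleman inequality for the adjoint system with nonlocal-in-space and memory-in-time boundary coupling, and Liusternik--Graves in weighted spaces, with nonnegativity of $v$ recovered from $\bar v>0$ and smallness. The only (cosmetic) difference is that the paper works with $q=\ell^2$ so that the Stefan condition linearizes exactly to $h_t+z_x(1,\cdot)=0$, whereas your choice $h=\ell-\bar\ell$ leaves variable coefficients $\beta\bar\ell h_t+\beta\bar\ell_t h$ in the linearized ODE; since $\ell\geq\ell_*>0$ the two parametrizations are equivalent and both are handled identically by the Carleman machinery.
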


	Note that the norm in~\eqref{1.2p} is given by
	$$
\|\ell_0 u_0(\cdot\,\ell_0)-\bar \ell_0\bar u_0(\cdot\,\bar \ell_0)\|_{H^1(0,1)} = \left(\int_0^1|\ell_0u_{0}(y\ell_0)-\bar\ell_0\bar u_{0}(y\bar \ell_0)|^2+|\ell_0u_{0,x}(y\ell_0)-\bar\ell_0\bar u_{0,x}(y\bar \ell_0)|^2\,dy\right)^{1/2}.
	$$
	{\begin{rmq}
The assumption $\ol v(t)>0$ on the left edge is a natural assumption as the fluid is liquid, being solid only on the right edge. 
%so the natural thing is that the temperature in the left edge is strictly positive.  	
\end{rmq}
	\begin{rmq} Theorem \ref{onephase} also holds if we just assume that $\bar v$ is nonnegative and $\bar v\not \equiv 0$. 
	\end{rmq}
	\begin{rmq} Thanks to the regularizing effect, Theorem~\ref{onephase} still holds if we only assume that the trajectories belong to~$H^{1,2}(Q_{\bar \ell})\times H^1(0,T)\times H^{3/4}(0,T)$. 
	\end{rmq}
}

	Let us mention some previous works on the control of~\eqref{eq:conStefan} and other similar models.
	
	The analysis of the controllability properties for linear and nonlinear parabolic PDEs in cylindrical parabolic domains is nowadays a classical problem in control theory and some relevant contributions are in~\cite{MR1318622, MR335014, MR1750109, MR1406566, MR1312710} and the references therein. 
	On the other hand, the study of the control and stabilization properties of free-boundary problems for PDEs has not been explored too much, although some important results have been obtained recently;
	see~\cite{MR3772848, MR3433238, MR3654149, MR3993192, MR4317441} and~\cite{MR4032314,kazan}, respectively for one-phase and two-phase Stefan problems.
	In \cite{MR4253800}, the authors study the controllability of free-boundary viscous Burgers equation with one moving endpoint; a similar problem was considered for a 1D fluid-structure system, with modified equations for the interface, see~\cite{MR2139944, MR3736685, MR3023058}.

	In this paper, we are going to consider a different situation, which leads to several new difficulties 
	{and novelties, not found in the previous works on free-boundary problems and fluid-structure models}.
	Let us give more details:	
\begin{itemize}
	\item From our knowledge, our result is the first one concerning the exact control to the trajectories in the context of a parabolic system 
	where the spatial domain changes with time and starts from a different location. 
	\textcolor{black}{Up to now, the available results have dealt with null controllability.
	Indeed, in the context of Stefan problems, the physical meaning of solutions in the previous works is limited due to the fact that 
	the controlled solutions do not preserve positivity. Our findings provide progress in that direction, because our solutions preserve 
	positivity, and thus have a proper physical meaning.}

	\item In fact, we control both components of the state (the final temperature and the final position of the liquid-solid interface).
	This will bring an additional difficulty. 

	\item After a suitable change of variable and some additional arguments, it will be seen that the free-boundary control problem is 
	equivalent to the null controllability of a nonlinear parabolic PDE-ODE system, which can also be viewed as a nonlinear parabolic 
	equation with  {nonlocal and memory} terms on the boundary.
	To establish this property, we will use two main tools:
	a new global Carleman inequality (with weights chosen to handle satisfactorily the boundary terms) and {\it Lyusternik--Graves' Inverse Function Theorem}.
\end{itemize}

	The rest of this paper is organized as follows.
	In~Section~\ref{preliminaries}, we will reformulate the free-boundary problem as a nonlinear parabolic system in a cylindrical domain and we will establish some well-posedness results.
	In~Section~\ref{ECT}, we will present a new Carleman inequality for an adjoint system which leads to the null controllability of a related linearized PDE-ODE system and we will give the proof of~Theorem~\ref{onephase}.
	Finally, the proofs of several results will be presented in~{Appendices~\ref{sec:app_A}, \ref{sec:app_C} and~\ref{sec:app_B}.}

%%%%%%%%%%%%%%%%%%%%%%%%%%%%%%
%%%%%%%%%%%%%%%%%%%%%%%%%%%%%%
%%%%%%%%%%%%%%%%%%%%%%%%%%%%%%
%%%%%%%  PRELIMINARES   %%%%%%%%%%%%%
%%%%%%%%%%%%%%%%%%%%%%%%%%%%%%
%%%%%%%%%%%%%%%%%%%%%%%%%%%%%%
%%%%%%%%%%%%%%%%%%%%%%%%%%%%%%

\section{Preliminaries}\label{preliminaries}

%%%%%%%%%%%%%%%%%%%%%%%%%%%%%%%%%%%%%%%%%%
%%%%%%%%%%%%%%%%%%%%%%%%%%%%%%%%%%%%%%%%%%
%%%%%%%%%%%%%%%%%%%%%%%%%%%%%%%%%%%%%%%%%%
%% REFORMULATION OF THE free-boundary PROBLEM   %%%%%%%%%
%%%%%%%%%%%%%%%%%%%%%%%%%%%%%%%%%%%%%%%%%%
%%%%%%%%%%%%%%%%%%%%%%%%%%%%%%%%%%%%%%%%%%
%%%%%%%%%%%%%%%%%%%%%%%%%%%%%%%%%%%%%%%%%%
	
\subsection{Reformulation of the free-boundary problem in a cylindrical domain} \label{reformulation}

	In order to study the controllability of \eqref{eq:conStefan}, it is convenient to get a reformulation as a nonlinear parabolic equation in a cylindrical domain.
	More precisely, let us set
\[
	p(y,t):=u\left(y\ell(t),t\right) \ \hbox{ and } \ q(t):=\ell(t)^2
\]
	for $(y,t)\in Q_1:=(0,1)\times (0,T)$.
	
	After this transformation, \eqref{eq:conStefan} reads
\begin{equation}\label{eq:conStefantrans}
	\left\{
		\begin{array}{lcl}
			%qp_t-p_{yy}-\frac{1}{2}q'yp_y=0& \mbox{ in }&Q_1,\\
			\dis qp_t-p_{yy}+\frac{y}{\beta}p_y(1,\cdot)p_y=0& \mbox{ in }&Q_1,\\
			p(0,\cdot)= v& \mbox{ in }&(0,T),\\
			p(1,\cdot)=0& \mbox{ in }&(0,T),\\
			p(\cdot\,,0)=p_0& \mbox{ in }&(0,1),\\
			\beta q_t+{2}p_y(1,\cdot)=0& \mbox{ in }& (0,T),\\
			q(0)=q_0, &&
		\end{array}
	\right.
\end{equation}
	where $q_0:=\ell_0^2$ and $p_0(y):=u_0(y\ell_0)$ in $(0,1)$.

\begin{rmq}
  	By introducing the square of $\ell(t)$, the Stefan condition on the interface becomes a linear constraint on~$q_t$ and $p_y(1\,,\cdot)$.
	Otherwise, we would have
\[
	\beta\ell_t(t)=-\frac{1}{\ell(t)}p_y(1,t).
\]
	Since $\ell$ has a strictly positive lower bound $\ell_*$, squaring is  a diffeomorphism. \Fin
\end{rmq}

	With a similar change of variables, \eqref{eq:freeStefan} is transformed into
\begin{equation}\label{eq:conStefanfreetrans}
	\left\{
		\begin{array}{lcl}
                       \dis \bar q\bar p_t-\bar p_{yy}+\frac{y}{\beta}\bar p_y(1,\cdot) \bar p_y=0& \mbox{ in }&Q_1,\\
                        \bar p(0,\cdot)= \bar v& \mbox{ in }&(0,T),\\
                        \bar p(1,\cdot)=0& \mbox{ in }&(0,T),\\
                        \bar p(\cdot\,,0)=\bar p_0& \mbox{ in }&(0,1),\\
                        \beta \bar q_t+{2}\bar p_y(1,\cdot)=0& \mbox{ in }& (0,T),\\
                        \bar q(0)=\bar q_0, &&
		\end{array}
	\right.
\end{equation}
	where $\bar p_0(y):=\bar u_0(y\bar \ell_0)$, $\bar q_0:=\bar{\ell}_0^2$ and $\bar p(y,t)=\bar u\left(\bar\ell(t)y,t\right)$ and $\bar q(t):=\bar \ell(t)^2$
	for $(y,t)\in Q_1$.
	Note that, by assumption, $\bar q(t) \in (q_*,+\infty)$ for all $t\in[0,T]$ with $q_*=\ell_*^2$.
	
	Thus, to prove that \eqref{eq:conStefan} is locally exactly controllable to the trajectory $(\bar u,\bar \ell)$ is equivalent to prove that \eqref{eq:conStefantrans} is locally exactly controllable to~$(\bar p,\bar q)$.
	Consequently, Theorem~\ref{onephase} will be a direct consequence of the following result:
	
\begin{proposition}\label{onephase:fixeddomaincontrol}
	{\black Let $(\bar p,\bar q,\bar v)\in  [W^{1,\infty}(0,T;H^{1}(0,1))\cap H^{1,2}(Q_1)]\times W^{1,\infty}(0,T)\times H^{3/4}(0,T)$ {satisfy}~\eqref{eq:conStefanfreetrans}, 
	with {\black $\bar v(t)>0$ for all $t\in[0,T]$}}.
	Then, there exists $\delta > 0$ with the following property:
	for any $p_0\in H_0^1(0,1)$ and any 
	$q_0\in(q_*,+\infty)$ satisfying 
        \[
	|q_0-\bar q_0|+\|p_0-\bar p_0\|_{H_0^1(0,1)} \leq \delta,
        \]
   	there exist {\black nonnegative controls $v\in  H^{3/4}(0,T)$} and associated solutions $(p,q)$ to~\eqref{eq:conStefantrans}, with
        \[
     	 p\in H^{1,2}(Q_1),~\,q\in H^1(0,T)~\,\text{ and }~\,q(t) \in (q_*,+\infty)~~\forall t\in [0,T] 
        \]
such that 
        \begin{equation}\label{am2}
 q(T) = \bar q(T)~~\mbox{and}~~ p(\cdot\,,T) = \bar p(\cdot\,,T)~~\mbox{in}~~(0,1).
        \end{equation}
\end{proposition}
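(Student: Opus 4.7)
The plan is to pass to the linearization of system~\eqref{eq:conStefantrans} around the reference trajectory $(\bar p, \bar q, \bar v)$, establish null controllability of this linearized system by means of a global Carleman inequality for the corresponding adjoint, and then deduce the nonlinear local statement through Lyusternik--Graves' inverse function theorem, as announced in the abstract. Writing $p = \bar p + z$, $q = \bar q + r$, $v = \bar v + w$ and subtracting~\eqref{eq:conStefanfreetrans} from~\eqref{eq:conStefantrans}, the principal linear part is
\begin{equation*}
\left\{
\begin{array}{lcl}
\bar q\, z_t - z_{yy} + \dfrac{y}{\beta}\bigl(\bar p_y(1,\cdot)\, z_y + z_y(1,\cdot)\, \bar p_y\bigr) + \bar p_t\, r = f & \text{in} & Q_1, \\
z(0,\cdot) = w,\quad z(1,\cdot) = 0 & \text{in} & (0,T), \\
\beta\, r_t + 2\, z_y(1,\cdot) = g & \text{in} & (0,T), \\
z(\cdot,0) = z_0,\quad r(0) = r_0, & &
\end{array}
\right.
\end{equation*}
where $(z_0, r_0) = (p_0 - \bar p_0,\, q_0 - \bar q_0)$ and $(f, g)$ collects the quadratic remainders. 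This is a linear parabolic PDE coupled to a scalar ODE through the trace $z_y(1, \cdot)$ and driven by the boundary control $w$ at $y = 0$.

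To prove null controllability of this linear system, I would first move the control inside the domain by extending the state to $(-\eta, 1) \times (0, T)$ with $w$ supported on $(-\eta, 0)$, or equivalently by placing a distributed control on a small interior subinterval $\omega_0 \Subset (0,1)$. Writing the formal adjoint $(\varphi, \psi)$ of the linearized PDE-ODE system, duality turns the trace coupling $z_y(1, \cdot)$ in the ODE and the zero-order term $\bar p_t\, r$ in the PDE into a boundary condition at $y = 1$ that is nonlocal in space (involving $\varphi_y(1, \cdot)$) and, after eliminating $\psi$ via its ODE, contains memory integrals in time. The key ingredient is then a global Carleman estimate for this adjoint with Fursikov--Imanuvilov type weights blowing up at $t = 0, T$ and an extra boundary weight at $y = 1$ chosen so that the nonlocal and memory contributions can be absorbed by the dominant ones. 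Combining this observability inequality with a standard HUM-type duality argument produces a continuous right inverse that, to each $(z_0, r_0, f, g)$ in suitable weighted spaces, assigns a control $w$ and a solution $(z, r)$ with $z(\cdot, T) = 0$ and $r(T) = 0$.

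Armed with this linear null-controllability operator, I would construct a smooth map $F$ between Banach spaces sending $(z, r, w)$ to the tuple consisting of the nonlinear residual of~\eqref{eq:conStefantrans} written around the trajectory and of the final states $(z(\cdot, T), r(T))$, with the initial data $(z_0, r_0)$ appearing as a parameter. The source spaces are chosen (a weighted $H^{1,2}(Q_1)$ for $z$, $H^1(0, T)$ for $r$, $H^{3/4}(0, T)$ for $w$) so that $F$ is of class $C^1$ at the origin and $F'(0, 0, 0)$ coincides with the linear controllability operator of the previous step, hence surjective with a continuous right inverse. Lyusternik--Graves' theorem then yields, for $(p_0, q_0)$ sufficiently close to $(\bar p_0, \bar q_0)$ in the norm of~\eqref{1.2p}, a zero of $F$ which gives directly~\eqref{am2}. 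The nonnegativity of $v = \bar v + w$ follows because $\bar v \in C^0([0, T])$ is strictly positive and $w \in H^{3/4}(0, T) \hookrightarrow C^0([0, T])$ is small; the strict inequality $q(t) > q_*$ on $[0, T]$ is preserved by the same continuity argument.

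The main obstacle is clearly the Carleman inequality in the second step. Standard parabolic Carleman estimates address only local boundary terms; here, dualizing the PDE-ODE coupling produces on $\{y = 1\}$ both a nonlocal-in-space trace and, once $\psi$ is eliminated, an in-time memory integral, and the classical Fursikov--Imanuvilov weights yield contributions of these terms that cannot be absorbed. Designing a weight that vanishes at $y = 1$ at the precise rate required to absorb simultaneously the nonlocal trace and the memory integral, while still controlling the interior dynamics, is the delicate computation that lies at the heart of the paper.
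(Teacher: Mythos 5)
Your proposal follows essentially the same route as the paper: reformulate as a null controllability problem around the trajectory, extend the domain to the left so that a distributed control in the extension region yields (by trace and parabolic regularity) an $H^{3/4}$ boundary control, prove a new Carleman inequality for the adjoint system whose boundary condition at $y=1$ carries the nonlocal-in-space and memory terms, and conclude via HUM and Lyusternik--Graves, with nonnegativity of $v$ and the bound $q>q_*$ recovered by continuity and smallness. The only cosmetic difference is in the description of the Carleman weight: the paper does not make the weight vanish at $y=1$ but uses the standard Fursikov--Imanuvilov construction with $\eta(-1)=\eta(1)=\min\eta$, so that $e^{-2s\alpha}$ is smallest precisely on the boundary where the nonlocal and memory traces live, which is what permits their absorption.
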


%%%%%%%%%%%%%%%%%%%%%%%%%%%%%%%%%%%%%%%%%%
%%%%%%%%%%%%%%%%%%%%%%%%%%%%%%%%%%%%%%%%%%
%%%%%%%%%%%%%%%%%%%%%%%%%%%%%%%%%%%%%%%%%%
%% REFORMULATION OF THE ECT - NC  %%%%%%%%%%%%%%%%%%%%
%%%%%%%%%%%%%%%%%%%%%%%%%%%%%%%%%%%%%%%%%%
%%%%%%%%%%%%%%%%%%%%%%%%%%%%%%%%%%%%%%%%%%
%%%%%%%%%%%%%%%%%%%%%%%%%%%%%%%%%%%%%%%%%%

\subsection{Reformulation as a null controllability problem} \label{reformulation2}

	Now, we will reformulate the desired control property as a null controllability problem.
	
	To do this, let us introduce the change of variable $z=p-\bar p$ and ${\black h=\beta (q-\bar q)/2}$.
	Then, the local exact controllability of $(\bar p,\bar q)$ for~\eqref{eq:conStefantrans} is reduced to the local null controllability of the following system, 
	where we have denoted again by~$x$ the spatial variable:
\begin{equation}\label{eq:conStefantransnull}
	\left\{
		\begin{array}{lcl}
			\dis\bar q z_t-z_{xx}+\frac{x}{\beta}\bar p_x(1,\cdot) z_x+\frac{x}{\beta} \bar p_xz_x(1\,,\cdot)+{\black 2\over \beta}\bar p_th + {\black2\over \beta}hz_t+\frac{x}{\beta}z_x(1,\cdot)z_x
			= 0 & \mbox{ in }& Q_1,\\
			z(0,\cdot)= \widehat v& \mbox{ in }&(0,T),\\
			z(1,\cdot)=0& \mbox{ in }&(0,T),\\
			z(\cdot\,,0)=z_0& \mbox{ in }&(0,1),\\
			{\black h_t+z_x(1,\cdot)=0}& \mbox{ in }& (0,T),\\
			h(0)=h_0, &&
		\end{array}
	\right.
\end{equation}
	where  $z_0:=p_0-\bar p_0$, {\black $h_0:=\beta(q_0-\bar q_0)/2$, $\widehat v=v-\bar v$ and $({2/\beta})h(t)+\bar q(t)\in (q_*,+\infty)$} for all $t\in [0,T]$.
	% and the controls $v$ are non-negative functions.}
	Here, we have used \eqref{eq:conStefanfreetrans} to simplify some terms. 

{\black
	Consequently, Proposition~\ref{onephase:fixeddomaincontrol} is obviously equivalent to the following result:}
	
\begin{proposition}\label{onephase:fixeddomaincontrol_nullz}
	Let $(\bar p,\bar q,\bar v)\in  [W^{1,\infty}(0,T;H^{1}(0,1))\cap H^{1,2}(Q_1)]\times W^{1,\infty}(0,T)\times H^{3/4}(0,T)$ {satisfy}~\eqref{eq:conStefanfreetrans}, 
	with $\bar v(t)>0$ for all $t\in[0,T]$.
	There exists $\delta > 0$ with the following property:
	for any $p_0\in H_0^1(0,1)$ and any 
	$q_0\in(q_*,+\infty)$ satisfying 
        \[
	|q_0-\bar q_0|+\|p_0-\bar p_0\|_{H_0^1(0,1)} \leq \delta,
        \]
   	there exist nonnegative controls $v\in  H^{3/4}(0,T)$ and associated solutions $(z,h)$ to~\eqref{eq:conStefantransnull}
	{\black where we have taken $z_0:=p_0-\bar p_0$, $h_0:=\beta(q_0-\bar q_0)/2$ and~$\widehat v=v-\bar v$ such that
        \[
     	 z\in H^{1,2}(Q_1),~\,h\in H^1(0,T)~\,\text{ and }~\,({2/\beta})h(t)+\bar q(t)\in (q_*,+\infty)~~\forall t\in [0,T],
        \]
        \begin{equation}\label{am2}
 h(T) = 0~~\mbox{and}~~ z(\cdot\,,T) = 0~~\mbox{in}~~(0,1).
        \end{equation}
}
\end{proposition}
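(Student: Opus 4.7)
The plan is to cast Proposition~\ref{onephase:fixeddomaincontrol_nullz} as a local surjectivity statement and apply the Lyusternik--Graves inverse function theorem at the origin. I would introduce Hilbert spaces $X$ and $Y$ and a nonlinear map $\mathcal{F}:X\to Y$ that, to a triplet $(z,h,\widehat v)$, associates the residual of~\eqref{eq:conStefantransnull} together with the pair $(z(\cdot,0),h(0))$; the space $X$ is tailored so that its elements automatically satisfy $z(\cdot,T)=0$ and $h(T)=0$ (for instance by an exponential weight vanishing at $t=T$). In one space dimension, $H^{1,2}(Q_1)\hookrightarrow L^\infty$ and the trace $z_x(1,\cdot)$ is well defined in $L^2(0,T)$, so the quadratic terms $(2/\beta)hz_t$ and $(x/\beta)z_x(1,\cdot)z_x$ can be absorbed in the target space, making $\mathcal{F}$ of class $C^1$ with $\mathcal{F}'(0,0,0)$ equal to the linearization of~\eqref{eq:conStefantransnull}. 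Lyusternik--Graves then reduces the proposition to the null controllability of this linearization.

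The core step is therefore the null controllability of the coupled linear PDE--ODE
\begin{equation*}
\left\{\begin{array}{l}
\bar q z_t-z_{xx}+\dfrac{x}{\beta}\bar p_x(1,\cdot) z_x+\dfrac{x}{\beta}\bar p_x z_x(1,\cdot)+\dfrac{2}{\beta}\bar p_t h=f,\\
z(0,\cdot)=\widehat v,\quad z(1,\cdot)=0,\quad z(\cdot,0)=z_0,\\
h_t+z_x(1,\cdot)=g,\quad h(0)=h_0,
\end{array}\right.
\end{equation*}
with $\widehat v$ as control. Following standard practice I would extend the cylinder to $(-a,1)\times(0,T)$ by prolonging $\bar q$ by a constant and $\bar p$ in a $C^1$ way, replace $\widehat v$ by a distributed control $\chi_\omega G$ supported in a small set $\omega\subset(-a,0)$, and recover $\widehat v$ a posteriori from the trace at $x=0$. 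By duality, this controllability is equivalent to an observability inequality for the adjoint system, which is a backward parabolic equation for $\varphi$ coupled to a backward ODE for $\eta$; the term $(x/\beta)\bar p_x z_x(1,\cdot)$ and the ODE $h_t+z_x(1,\cdot)=g$ generate, after integration by parts, nonlocal and memory--in--time contributions localized at the edge $x=1$.

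The main obstacle is precisely the new global Carleman inequality for this adjoint system: the standard Fursikov--Imanuvilov weight $e^{-s\alpha(x,t)}\xi^k(x,t)$ does not control those memory/nonlocal terms at $x=1$. I would accordingly design $\alpha$ with additional sign conditions on its $x$-derivative at $x=1$ (while keeping $\alpha\to+\infty$ as $t\to 0,T$), conjugate as usual, integrate by parts tracking the boundary contributions at $x=1$, and combine the resulting estimate with a weighted $L^2$ bound for $\eta$ obtained directly from its ODE. Taking the Carleman parameters $s,\lambda$ large enough should absorb the crossed terms and yield an inequality of the form
\begin{equation*}
\|\varphi(\cdot,0)\|_{L^2(-a,1)}^2+|\eta(0)|^2\leq C\iint_{\omega\times(0,T)}e^{-2s\alpha}\varphi^2\,dx\,dt,
\end{equation*}
which, by the Hilbert uniqueness method, provides null controllability of the linearization in the weighted functional setting compatible with $X$ and $Y$.

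Once linear null controllability is established, Lyusternik--Graves delivers $\delta>0$ and, for every data satisfying $|q_0-\bar q_0|+\|p_0-\bar p_0\|_{H_0^1(0,1)}\leq\delta$, a triplet $(z,h,\widehat v)\in X$ solving~\eqref{eq:conStefantransnull} with the terminal conditions in~\eqref{am2}. The pointwise constraints $v=\bar v+\widehat v\geq 0$ and $(2/\beta)h(t)+\bar q(t)\in(q_*,+\infty)$ follow, after possibly shrinking $\delta$, from the continuous embedding of $X$ into spaces of continuous functions combined with the strict positivity of $\bar v$ and of $\bar q-q_*$ on the compact interval $[0,T]$, which closes the argument.
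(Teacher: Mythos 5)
Your proposal is correct and follows essentially the same route as the paper: extension to a distributed control on an enlarged cylinder with recovery of the boundary control from the trace at $x=0$, a new Carleman/observability inequality for the adjoint system with nonlocal-in-space and memory terms concentrated at $x=1$, null controllability of the linearization in exponentially weighted spaces that force $z(\cdot,T)=0$ and $h(T)=0$, and Lyusternik--Graves to conclude, with the positivity constraints recovered by shrinking $\delta$. The only cosmetic differences are that the paper performs the inverse-function argument entirely on the extended distributed-control problem (restricting to $Q_1$ only at the end) and obtains the weighted controls via a penalized Lax--Milgram formulation rather than plain HUM duality, neither of which changes the substance.
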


%%%%%%%%%%%%%%%%%%%%%%%%%%%%%%%%%%%%%%%%%%
%%%%%%%%%%%%%%%%%%%%%%%%%%%%%%%%%%%%%%%%%%
%%%%%%%%%%%%%%%%%%%%%%%%%%%%%%%%%%%%%%%%%%
%% REFORMULATION OF THE ECT - NC  %%%%%%%%%%%%%%%%%%%%
%%%%%%%%%%%%%%%%%%%%%%%%%%%%%%%%%%%%%%%%%%
%%%%%%%%%%%%%%%%%%%%%%%%%%%%%%%%%%%%%%%%%%
%%%%%%%%%%%%%%%%%%%%%%%%%%%%%%%%%%%%%%%%%%

\subsection{Reformulation as a distributed control problem} \label{reformulation2}

	{\black Let us establish a result similar to~Proposition~\ref{onephase:fixeddomaincontrol_nullz} for a distributed control system.}
	%Proposition~\ref{onephase:fixeddomaincontrol_nullz} will then be an immediate consequence.
	
	%Let us present a result in an extended domain for which Proposition~\ref{onephase:fixeddomaincontrol_nullz} will be an immediate consequence. 
	
	Thus, let us set
	$$
Q:=(-1,1)\times (0,T)\ \text{ and } \ H_0^{1,2}(Q) := \{ z \in H^{1,2}(Q) : z(-1,\cdot) = z(1,\cdot) = 0 \ \text{ in } \ (0,T) \}
	$$
and let us consider a non-empty open set $\omega\subset\subset (-1,0)$. 
	The following holds:

\begin{proposition}\label{onephase:fixeddomaincontrol_exdended}
	Assume that $(\bar p,\bar q)\in  [W^{1,\infty}(0,T;H^{1}(-1,1))\cap H^{1,2}_0(Q)]\times W^{1,\infty}(0,T)$, with $\bar q(t)\in (q_*,+\infty)$ for all $t\in [0,T]$.
	There exists $\delta > 0$ with the following property:
	for any $z_0\in H_0^1(-1,1)$ and any $h_0 \in \mathbb{R}$ satisfying 
        \[
	|h_0|+\|z_0\|_{H_0^1(-1,1)} \leq \delta,
        \]
   	there exist controls $w \in  L^2(\omega \times (0,T))$ and associated solutions to the system
 \begin{equation}\label{eq:conStefantransnull_extended}
	\left\{
		\begin{array}{lcl}
			%(h+\bar q)z_t-z_{xx}-\frac{1}{2}(h_t+\bar q') x z_x+\bar p_th+\frac{1}{\beta}\bar p_x x z_x(1\,,\cdot)
			\dis\bar q z_t-z_{xx}+\frac{x}{\beta}\bar p_x(1,\cdot) z_x+\frac{x}{\beta}\bar p_x z_x(1\,,\cdot)+{\black 2\over \beta}\bar p_th + {\black 2\over \beta}hz_t+\frac{x}{\beta}z_x(1,\cdot)z_x
			= w1_\omega & \mbox{ in }& Q,\\
			z(-1,\cdot)= 0& \mbox{ in }&(0,T),\\
			z(1,\cdot)=0& \mbox{ in }&(0,T),\\
			z(\cdot\,,0)=z_0& \mbox{ in }&(-1,1),\\
			{\black h_t+z_x(1,\cdot)=0}& \mbox{ in }& (0,T),\\
			h(0)=h_0 &&
		\end{array}
	\right.
\end{equation}  
with $(z,h)\in H^{1,2}_0(Q)\times H^1(0,T)$, satisfying $\|(z,h)\|_{H^{1,2}_0(Q)\times H^1(0,T)}\leq C\|(z_0,h_0)\|_{ H_0^1(-1,1)\times \mathbb{R}},$ such that 
\begin{equation}\label{am2}
	 \quad h(T) = 0~~\mbox{and}~~ z(\cdot\,,T) = 0~~\mbox{in}~~(-1,1),
\end{equation}
        for some constant $C>0$. 
\end{proposition}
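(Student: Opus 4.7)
The plan is to prove Proposition~\ref{onephase:fixeddomaincontrol_exdended} by the now-classical scheme ``Carleman inequality $\Rightarrow$ observability $\Rightarrow$ null controllability of the linearization $\Rightarrow$ local null controllability via inverse mapping''. More precisely, I would set
\[
\mathcal{X} := \bigl\{ (z,h,w) \in H^{1,2}_0(Q)\times H^1(0,T)\times L^2(\omega\times(0,T)) : z(\cdot,T)=0,\ h(T)=0 \bigr\}
\]
and
\[
\mathcal{Y} := L^2(Q)\times L^2(0,T)\times H_0^1(-1,1)\times \mathbb{R},
\]
and consider the map $F:\mathcal{X}\to\mathcal{Y}$ defined by
\[
F(z,h,w) := \left(\bar q z_t-z_{xx}+\tfrac{x}{\beta}\bar p_x(1,\cdot)z_x+\tfrac{x}{\beta}\bar p_x z_x(1,\cdot)+\tfrac{2}{\beta}\bar p_t h+\tfrac{2}{\beta}h z_t+\tfrac{x}{\beta}z_x(1,\cdot)z_x - w\mathbf{1}_\omega,\; h_t+z_x(1,\cdot),\; z(\cdot,0),\; h(0)\right).
\]
The strategy is to apply the Lyusternik--Graves inverse function theorem at the point $(0,0,0)\in F^{-1}(0,0,0,0)$.

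The first main step is to verify that $F$ is well-defined and of class $C^1$ in a neighborhood of the origin. The only nontrivial contributions are the quadratic terms $\tfrac{2}{\beta}h z_t$ and $\tfrac{x}{\beta} z_x(1,\cdot)z_x$; for these I would use that $z\in H^{1,2}_0(Q)$ embeds into $L^\infty(0,T;H^1(-1,1))$ (so $z_x(1,\cdot)\in L^\infty(0,T)$ by the trace theorem combined with the parabolic regularity gain), and that $h\in H^1(0,T)\hookrightarrow L^\infty(0,T)$, so both products land in $L^2(Q)$ and depend smoothly on $(z,h)$.

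The heart of the proof is the surjectivity of $F'(0,0,0)$, which is exactly the null controllability of the linearized PDE--ODE system
\[
\left\{
\begin{array}{l}
\bar q z_t-z_{xx}+\tfrac{x}{\beta}\bar p_x(1,\cdot) z_x+\tfrac{x}{\beta}\bar p_x z_x(1,\cdot)+\tfrac{2}{\beta}\bar p_t h = f+w\mathbf{1}_\omega,\\[2pt]
h_t+z_x(1,\cdot)=g,\quad z(\pm 1,\cdot)=0,\quad z(\cdot,0)=z_0,\quad h(0)=h_0,
\end{array}
\right.
\]
with source terms $(f,g)$ in suitable weighted spaces. By the classical duality argument (HUM/penalized functional), this reduces to an observability inequality for the adjoint system, which has the shape of a backward parabolic equation coupled with a backward ODE through nonlocal/memory terms concentrated on the boundary $x=1$: the adjoint variable of $h$ feeds into a boundary source that propagates along $(0,T)$, while the ``$z_x(1,\cdot)$'' and ``$\bar p_x(1,\cdot)z_x$'' terms produce, after integration by parts, memory-type contributions. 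The plan is to invoke the new global Carleman inequality announced in Section~\ref{ECT}, with weights specifically tailored to absorb these boundary memory terms, and then deduce the observability inequality in the appropriate weighted norms.

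I expect the inverse-mapping step to be routine once $F'(0,0,0)$ is right-invertible with values in $\mathcal{X}$; the standard trick is to define the right-inverse by solving the linear null control problem while minimizing a cost that penalizes the state near $t=T$, so that the obtained $(z,h,w)$ depends continuously and linearly on the data $(f,g,z_0,h_0)$, giving the estimate $\|(z,h)\|_{H^{1,2}_0(Q)\times H^1(0,T)}\le C\|(z_0,h_0)\|_{H_0^1\times \mathbb{R}}$. The main obstacle is really the Carleman estimate for the adjoint: the coupling through $z_x(1,\cdot)$ and through the $h$--equation is nonlocal in space and behaves like a memory term in time, and standard Carleman weights do not automatically control these boundary contributions, so the weight function must be chosen with care (in particular, its behavior at $x=1$ must be compatible with the trace terms that arise when integrating by parts).
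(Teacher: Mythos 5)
Your overall strategy (Lyusternik--Graves applied to the map sending $(z,h,w)$ to the PDE residual, the ODE residual and the initial data, with surjectivity of the linearization supplied by a Carleman-based null controllability result) is exactly the scheme the paper follows. However, there is a genuine gap in your choice of function spaces, and it is not a cosmetic one: it is the crux of making the inverse-mapping argument close. You take $\mathcal{X}$ to be the \emph{unweighted} space $H^{1,2}_0(Q)\times H^1(0,T)\times L^2(\omega\times(0,T))$ with the endpoint constraints $z(\cdot\,,T)=0$, $h(T)=0$, and $\mathcal{Y}=L^2(Q)\times L^2(0,T)\times H^1_0(-1,1)\times\mathbb{R}$. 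With this $\mathcal{Y}$, $F'(0,0,0)$ is \emph{not} surjective: for a source $f\in L^2(Q)$ that does not decay as $t\to T^-$, one cannot in general steer the forced linear system to $(0,0)$ at time $T$. You acknowledge this in passing (``source terms in suitable weighted spaces''), but once $\mathcal{Y}$ carries weights blowing up as $t\to T^-$, your $\mathcal{X}$ no longer works either: to show that the quadratic terms $\frac{2}{\beta}hz_t$ and $\frac{x}{\beta}z_x(1,\cdot)z_x$ land in the weighted target $L^2_{\rho_3}(Q)$, membership of $(z,h)$ in plain $H^{1,2}_0(Q)\times H^1(0,T)$ with vanishing final values is not enough; the state itself must decay at a rate compatible with $\rho_3$. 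This is precisely why the paper defines the domain $E$ with weighted norms ($z\in H^{1,2}_{0,\rho_4}(Q)$, $h\in H^1_{\rho_4}(0,T)$, $w\in L^2_{\rho_2}$, residuals in $L^2_{\rho_3}$), proves the linear controllability result directly in these weighted classes (Proposition~\ref{prop:contlin}, via the Lax--Milgram/penalized-functional construction and the modified weights of Proposition~\ref{lm:caraltweights} that do not vanish at $t=0$), and checks the compatibility relations $\rho_4\rho_3^{-1},\rho_4\rho_2^{-1},\rho_{4,t}\rho_0^{-1}\in L^\infty(0,T)$ in Remark~\ref{bounds_weight} before concluding that the bilinear part of $\Lambda$ is bounded from $E\times E$ into $L^2_{\rho_3}(Q)$. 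Without this weighted architecture your argument does not close.

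A secondary slip: you claim $z_x(1,\cdot)\in L^\infty(0,T)$ from $z\in H^{1,2}_0(Q)$ ``by the trace theorem combined with the parabolic regularity gain''. The embedding $H^{1,2}(Q)\hookrightarrow C^0([0,T];H^1(-1,1))$ only gives $z_x(\cdot\,,t)\in L^2(-1,1)$, which has no pointwise trace at $x=1$; what one actually has is $z_x(1,\cdot)\in L^2(0,T)$ coming from $z\in L^2(0,T;H^2(-1,1))$. The product $z_x(1,\cdot)z_x$ still lands in $L^2(Q)$ because the other factor is bounded in $L^\infty(0,T;L^2(-1,1))$ (this is how the paper argues), but your stated justification is incorrect as written.
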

	The proof of Proposition \ref{onephase:fixeddomaincontrol_exdended} will be given in Section \ref{sec:controlnonlinear}.
	The main reason to consider this extended problem is that the boundary controls obtained with the help of Carleman estimates are not sufficiently regular for our purposes, just $L^2(0,T)$, while we need at least~$H^{3/4}(0,T)$.
	On the other hand, with distributed controls, local parabolic results can be used to improve the regularity of the control. 
	
	Obviously, Proposition~\ref{onephase:fixeddomaincontrol_nullz} follows from Proposition~\ref{onephase:fixeddomaincontrol_exdended} by restricting to~$Q_1$ and accepting that the boundary control $\widehat v = \widehat v(t)$ is just the lateral trace of~$z$ at~$x=0$.
	
	 {\black Also, note that we can take $\delta$ small enough to have $({2/\beta})h(t)+\bar q(t)\in (q_*,+\infty)$ for all $t\in [0,T]$.
	 Since $\bar v(t)>0$ for all $t\in[0,T]$, by taking $\delta$ sufficiently small, we can ensure that the control $v:=\widehat v+\bar v$ is nonnegative.}

%%%%%%%%%%%%%%%%%%%%%%%%%%%%%%%%%%%%%%%%%%
%%%%%%%%%%%%%%%%%%%%%%%%%%%%%%%%%%%%%%%%%%
%%%%%%%%%%%%%%%%%%%%%%%%%%%%%%%%%%%%%%%%%%
%%%%%%%%%%%% LINEARIZATION    %%%%%%%%%%%%%%%%%%%%
%%%%%%%%%%%%%%%%%%%%%%%%%%%%%%%%%%%%%%%%%%
%%%%%%%%%%%%%%%%%%%%%%%%%%%%%%%%%%%%%%%%%%
%%%%%%%%%%%%%%%%%%%%%%%%%%%%%%%%%%%%%%%%%%

\subsection{Linearization}

	Now, our aim is to linearize \eqref{eq:conStefantransnull_extended} in a neighborhood of $(0,0)$ and analyze the null controllability properties of the resulting system.
	Thus, let us consider the non-homogeneous linear equation
\begin{equation}\label{eq:conStefantransnulllinear}
\left\{
	\begin{array}{lcl}\dis
		%\bar q z_t- z_{xx}-\frac{1}{2}\bar q' x z_x+ \bar p_t h+\frac{1}{\beta}\bar p_x x z_x(1\,,\cdot)
		\bar q z_t-z_{xx}+\frac{x}{\beta}\bar p_x(1,\cdot) z_x+\frac{x}{\beta}\bar p_x z_x(1\,,\cdot)+{\black 2\over \beta}\bar p_th=f_1+w1_\omega	& \mbox{ in }&Q,\\
 		z(-1,\cdot)= 0																& \mbox{ in }&(0,T),\\
		z(1,\cdot)=0																	& \mbox{ in }&(0,T),\\
		z(\cdot\,,0)= z_0																& \mbox{ in }&(-1,1),\\
		{\black h_t+ z_x(1,\cdot)=f_2}													& \mbox{ in }& (0,T),\\
		h(0)= h_0,&&
	\end{array}
	\right.
\end{equation}
	where $f_1$ and $f_2$ belong to appropriate spaces of functions that decay exponentially as $t\to T^-$ and will be made precise below. 
	
	In order to prove the null controllability of \eqref{eq:conStefantransnulllinear}, we are going to use the Hilbert Uniqueness Method (see \cite{lions1988controlabilite}).
	Accordingly, we will first deduce an observability  inequality for the adjoint, which is the following:
\begin{equation}\label{adj:Stefan}
	\left\{
		\begin{array}{lcl}\dis
			%-\vp_t-B\vp_{xx}+D\vp_x+K\vp=G& \mbox{ in }&Q,\\
			%-(\bar q\vp)_t- \vp_{xx}-\frac{1}{\beta}\bar p_x(1,\cdot)\left(x\vp\right)_x=g_1& \mbox{ in }&Q,\\
			%\noalign{\smallskip}\displaystyle
	-\bar q\vp_t- \vp_{xx}-\frac{x}{\beta}\bar p_x(1,\cdot)\vp_x+\frac{1}{\beta}\bar p_x(1,\cdot)\vp=g_1& \mbox{ in }&Q,\\
			\noalign{\smallskip}\displaystyle
			\vp(-1,\cdot)=0& \mbox{ in }&(0,T),\\
			\noalign{\smallskip}\displaystyle
			\vp(1,\cdot)=\gamma+\int_{-1}^1 \frac{x}{\beta}\bar p_x(x,\cdot) \vp(x,\cdot)\,dx& \mbox{ in }&(0,T),\\
			\noalign{\smallskip}\displaystyle
			\vp(\cdot\,,T)=\vp_T& \mbox{ in }&(-1,1),\\
			\noalign{\smallskip}\displaystyle
			\gamma_t=\int_{-1}^1{\black 2\over \beta}{\bar p_t(x,\cdot)}\vp(x,\cdot)\,dx+g_2& \mbox{ in }& (0,T),\\
			\noalign{\smallskip}\displaystyle
			\gamma(T)=\gamma_T.
		\end{array}
	\right.
\end{equation}

	It is worth to mention that, in  \cite{MR4253800}, the authors
	  point out that the exact controllability to the trajectories for the free-boundary viscous Burgers equation is an open problem. They also 
	  linearize that problem and compute its adjoint system (similar to \eqref{adj:Stefan}).

%%%%%%%%%%%%%%%%%%%%%%%%%%%%%%%%%%%%%%%%%%
%%%%%%%%%%%%%%%%%%%%%%%%%%%%%%%%%%%%%%%%%%
%%%%%%%%%%%%%%%%%%%%%%%%%%%%%%%%%%%%%%%%%%
%% REFORMULATION OF THE free-boundary PROBLEM   %%%%%%%%%
%%%%%%%%%%%%%%%%%%%%%%%%%%%%%%%%%%%%%%%%%%
%%%%%%%%%%%%%%%%%%%%%%%%%%%%%%%%%%%%%%%%%%
%%%%%%%%%%%%%%%%%%%%%%%%%%%%%%%%%%%%%%%%%%

\subsection{Well-posedness of the adjoint system}

	Henceforth, we will denote by  $(\cdot\,,\cdot)_2$ the usual scalar product in $L^2(-1,1)$ and~$\|\cdot\|_2$ will stand for the associated norm.
	
	For clarity, we will provisionally change \eqref{adj:Stefan} by a similar in time system with general coefficients:
\begin{equation}\label{adj:Stefanplain}
\left\{
	\begin{array}{lll}
	-\bar q(t)\varphi_t-\varphi_{xx}-a\varphi_x-b\varphi=f& \mbox{ in }&Q,\\
	\noalign{\smallskip}\displaystyle
	\varphi(-1,\cdot)=0& \mbox{ in }&(0,T),\\
	\noalign{\smallskip}\displaystyle
	\varphi(1,t)=\gamma(t)+(N(\cdot\,,t),\varphi(\cdot\,,t))_2& \mbox{ in }&(0,T),\\
	\noalign{\smallskip}\displaystyle
	\varphi(\cdot\,,T)=\varphi_T& \mbox{ in }&(-1,1),\\
	\noalign{\smallskip}\displaystyle
	\gamma'(t)=(R(\cdot\,,t),\varphi(\cdot\,,t))_2+g(t)& \mbox{ in }&(0,T),\\
	\noalign{\smallskip}\displaystyle
	\gamma(T)=\gamma_T.
\end{array}
\right.
\end{equation}
%	Here, we assume that $\ol q\in C^1([0,T];\mathbb{R}_+)$ such that:
%\begin{equation}\label{eq:minS}
%	\min_{[0,T]}\ol q>0.
%\end{equation}
%	Moreover, we assume that $R\in L^\infty(Q)$ and 
%$N\in W^{1,\infty}(0,T;L^\infty)$. 
	Note that the boundary condition on $\varphi$ at $x=1$ involves $\gamma$, that is essentially a primitive in time of a spatial integral of $\varphi$
	and an additional spatial integral of $\varphi$. Thus, in this system, we find nonlocal in space and memory boundary terms.

	The following result holds:
\begin{proposition}\label{prop:sol_adj}
	Let us assume that $R\in L^2(Q)$, $N\in H^1(0,T;L^2(-1,1))$, $a,\,b\in  L^2(0,T;L^\infty(-1,1))$ and ${\black\bar q\in C^0([0,T])}$ with $\bar q(t) \in(q_*,+\infty)$ for all $t\in [0,T]$. 
	Let $f\in L^2(Q)$, $g\in L^2(0,T)$, $\varphi_T\in H^1(-1,1)$ and $\gamma_T \in \R$ be given and assume that
\begin{equation}\label{compatib_cond}
 	\varphi_T(-1)=0~\,\hbox{and}~\,\varphi_T(1)=\gamma_T+(N(\cdot\,,T),\varphi_T)_2.
\end{equation}
	Then, there exists a unique strong solution in $H^{1,2}(Q)\times H^1(0,T)$ to  \eqref{adj:Stefanplain} such that the following estimate holds:
	\begin{equation}\label{est:regadj}
\|\varphi\|_{ H^{1,2}(Q)}^2+\|\gamma\|_{H^1(0,T)}^2
\leq e^{C(1+T)} \left(
\|f\|_{L^2(Q)}^2+\|g\|_{L^2(0,T)}^2+\|\varphi_0\|_{H^1(0,1)}^2+|\gamma_0|^2\right),
	\end{equation}
where $C$ is a positive constant depending on $a$, $b$, $R$, $N$ and $\bar q$ but independent of $T$.
\end{proposition}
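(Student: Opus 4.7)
The strategy is to reverse time, homogenize the nonhomogeneous Dirichlet datum at $x=1$ by a lift, and then set up a Banach fixed-point argument to absorb the nonlocal-in-space and memory couplings on the boundary. First, I would set $\wtilde{\vp}(x,s):=\vp(x,T-s)$ and $\wtilde{\gamma}(s):=\gamma(T-s)$, which turns~\eqref{adj:Stefanplain} into a forward parabolic-ODE problem with initial data $(\vp_T,\gamma_T)$ satisfying the compatibility~\eqref{compatib_cond}. Fix a lift $\chi\in C^\infty([-1,1])$ with $\chi(-1)=0$, $\chi(1)=1$; at each step of the iteration the unknown $\vp$ will be split as $\vp=\chi\,\Psi+\phi$, where $\Psi$ is the (known) boundary datum at that step and $\phi$ satisfies homogeneous Dirichlet conditions at both endpoints. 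Note that $\vp_T-\chi\,\vp_T(1)$ vanishes at $x=\pm1$ by~\eqref{compatib_cond}, so the compatibility of the initial data transfers cleanly to $\phi$.

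Next I would define the fixed-point map $\mathcal{F}:\hat{\vp}\mapsto\vp$ on $X:=H^{1,2}(Q)$ as follows. Given $\hat{\vp}\in X$, integrate the ODE explicitly to obtain
\[
\hat{\gamma}(t)=\gamma_T-\int_t^T\bigl[(R(\cdot\,,s),\hat{\vp}(\cdot\,,s))_2+g(s)\bigr]\,ds,
\]
form $\hat{\Psi}(t):=\hat{\gamma}(t)+(N(\cdot\,,t),\hat{\vp}(\cdot\,,t))_2$, and let $\vp$ solve the linear non-autonomous parabolic problem with source $f$, initial datum $\vp_T$, homogeneous Dirichlet at $x=-1$ and Dirichlet datum $\hat{\Psi}$ at $x=1$. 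The assumption $N\in H^1(0,T;L^2(-1,1))$ is critical here because, as soon as $\hat{\vp}\in H^1(0,T;L^2(-1,1))$ (guaranteed by $\hat{\vp}\in X$), one gets $(N,\hat{\vp})_2\in H^1(0,T)$ and hence $\hat{\Psi}\in H^1(0,T)\hookrightarrow H^{3/4}(0,T)$; this is the right regularity to recover a strong solution $\vp\in H^{1,2}(Q)$ via the standard lifting $\vp=\chi\hat{\Psi}+\phi$ and parabolic energy estimates. The coefficient regularity $a,b\in L^2(0,T;L^\infty(-1,1))$ together with $\bar q\geq q_*>0$ yield, by a Gronwall argument on the energy $\|\vp(\cdot\,,t)\|_{H^1}^2$, a bound of the form $\|\vp\|_{H^{1,2}(Q)}^2\leq e^{C(1+T)}\bigl(\|f\|_{L^2(Q)}^2+\|\vp_T\|_{H^1(-1,1)}^2+\|\hat{\Psi}\|_{H^1(0,T)}^2\bigr)$.

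The main obstacle is to make $\mathcal{F}$ a contraction in spite of the memory term (the $\gamma$-dependence involves a full backward integral of $\hat{\vp}$) and the spatial nonlocality $(N,\hat{\vp})_2$. I would first work on a short window $[T-\tau,T]$ of length $\tau>0$: the difference $\mathcal{F}(\hat{\vp}_1)-\mathcal{F}(\hat{\vp}_2)$ solves a linear problem with zero initial data and data bounded by a constant times $\tau^{1/2}\bigl(\|R\|_{L^2(Q)}+\|N\|_{H^1(0,T;L^2)}\bigr)\|\hat{\vp}_1-\hat{\vp}_2\|_X$, so for $\tau$ sufficiently small the map is strictly contractive and yields a unique local solution $(\vp,\gamma)\in H^{1,2}((-1,1)\times(T-\tau,T))\times H^1(T-\tau,T)$. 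Since the contraction radius $\tau$ depends only on $\|R\|_{L^2(Q)}$, $\|N\|_{H^1(0,T;L^2)}$, $q_*$ and the coefficient norms (not on the current initial data), iterating the construction on successive intervals of length $\tau$ extends the solution to the whole $[0,T]$. Reassembling the per-step bounds with a final Gronwall inequality produces the global estimate~\eqref{est:regadj}, the constant $C$ depending only on $a,b,R,N$ and the bounds on $\bar q$.
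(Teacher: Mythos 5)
Your route --- time reversal, a Dirichlet lift at $x=1$, and a Banach fixed point on short time windows --- is genuinely different from the paper's, which runs a Leray--Schauder scheme on $\mathcal{B}=H^{3/4}(0,T;L^2(-1,1))\times H^{3/4}(0,T)$ and therefore never needs a contraction: it only needs compactness of the iteration map (from parabolic regularity, $H^{1,2}(Q)$ embeds compactly into $H^{3/4}(0,T;L^2(-1,1))$) together with a uniform a priori bound on all fixed points of $\Lambda(\cdot\,,\sigma)$. The distinction matters here, because your key contraction estimate fails as stated. Writing $u=\hat\varphi_1-\hat\varphi_2$, the boundary-datum difference is $\hat\Psi_1-\hat\Psi_2=-\int_t^T(R(\cdot\,,s),u(\cdot\,,s))_2\,ds+(N(\cdot\,,t),u(\cdot\,,t))_2$. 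The memory part is harmless ($R\in L^2(Q)$ yields the vanishing factor $\|R\|_{L^2((T-\tau,T)\times(-1,1))}$), but to solve the lifted problem you must control the datum in $H^1(T-\tau,T)$ (or at least $H^{3/4}$), and
\[
\frac{d}{dt}(N(\cdot\,,t),u(\cdot\,,t))_2=(N_t(\cdot\,,t),u(\cdot\,,t))_2+(N(\cdot\,,t),u_t(\cdot\,,t))_2 .
\]
The second term is only bounded by $\|N\|_{L^\infty(T-\tau,T;L^2)}\,\|u_t\|_{L^2((T-\tau,T)\times(-1,1))}$, and neither factor becomes small as $\tau\to0$: the first is at least $\|N(\cdot\,,T)\|_2$, and the second is precisely the norm you are contracting in. Hence the advertised bound $C\tau^{1/2}\bigl(\|R\|_{L^2(Q)}+\|N\|_{H^1(0,T;L^2)}\bigr)\|u\|_X$ is false for this contribution, and $\mathcal{F}$ is not a contraction on $H^{1,2}$ by the argument given.

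The step is repairable, but only with an ingredient you have not supplied: either measure the boundary datum in $H^{3/4}(T-\tau,T)$ only (sharp trace theory instead of the $H^1$ lift) and interpolate, $\|(N,u)_2\|_{H^{3/4}}\leq C\|(N,u)_2\|_{L^2}^{1/4}\|(N,u)_2\|_{H^1}^{3/4}\leq C\tau^{1/4}\|u\|_{H^{1,2}}$, exploiting that $u(\cdot\,,T)=0$ on the fixed-point set --- which, incidentally, you must also impose explicitly: taking $X=H^{1,2}(Q)$ without the constraint $\hat\varphi(\cdot\,,T)=\varphi_T$ destroys the corner compatibility $\hat\Psi(T)=\varphi_T(1)$ and the output leaves $H^{1,2}(Q)$; or abandon the contraction altogether, as the paper does, and bound $(N,\varphi_t)_2$ a posteriori inside the energy estimate, where it involves the solution itself so that the resulting $\delta\int\|\varphi_t\|_2^2$ can be absorbed into the left-hand side (this is exactly the paper's estimate \eqref{est:regaux3}). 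As written, the central step of your proposal does not close.
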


	The proof is given in Appendix~\ref{sec:app_A}.

%%%%%%%%%%%%%%%%%%%%%%%%%%%%%%%%%%%%%%%%%%
%%%%%%%%%%%%%%%%%%%%%%%%%%%%%%%%%%%%%%%%%%
%%%%%%%%%%%%%%%%%%%%%%%%%%%%%%%%%%%%%%%%%%
%% REFORMULATION OF THE free-boundary PROBLEM   %%%%%%%%%
%%%%%%%%%%%%%%%%%%%%%%%%%%%%%%%%%%%%%%%%%%
%%%%%%%%%%%%%%%%%%%%%%%%%%%%%%%%%%%%%%%%%%
%%%%%%%%%%%%%%%%%%%%%%%%%%%%%%%%%%%%%%%%%%

	\subsection{Carleman estimates for parabolic equations with nonlocal boundary conditions}
	
Let us recall the definition of several classical weights, frequently used in connection with global Carleman inequalities for parabolic equations, see \cite{MR1406566}.
	
	Let $\omega_0$ be a non-empty open set, with~$\omega_0 \subset\subset \omega$ and let be a function $\eta$ in~$C^2([-1,1])$ satisfying
\begin{equation}\label{P1}
    		\eta>0\ \hbox{ in } \ [-1,1], %\ \  \eta_{xx}<0 \ \hbox{ in } \ [-1,1],
		\ \ \min_{x \in [-1,1]\setminus\omega_0}|\eta_x(x)|>0,
%		 \mbox{ in }[-1,1]\setminus\omega_0
    		\ \ \eta(-1)=\eta(1)=\dis\min_{x\in[-1,1]}\eta(x).
\end{equation}

	Let us introduce the following associated weights:
\[
	\begin{array}{ll}\dis
	\alpha (x,t) :={e^{2\lambda m \|\eta \|_\infty} - e^{\lambda(m\|\eta\|_\infty+\eta (x))}\over t(T-t)}&\forall(x,t) \in Q,\\
	\noalign{\smallskip}\dis
    	\xi (x,t) := {e^{ \lambda(m\|\eta\|_\infty+\eta (x))}\over t(T-t)}&\forall(x,t)\in Q,\\
	%\alpha^*(t):= \min\limits_{x\in [0,L]}\alpha(x,t),%={e^{2\lambda m \|\eta \|_\infty} - e^{\lambda(m\|\eta\|_\infty+\eta (0))}\over t(T-t)}=\alpha(0,t),
	%&\forall t\in (0,T),\\
	%\noalign{\smallskip}\dis
	%\xi^*(t) := \max\limits_{x\in [0,L]}\xi(x,t), %={ e^{\lambda(m\|\eta\|_\infty+\eta (0))}\over t(T-t)}=\xi(0,t),
	%&\forall t\in (0,T),\\
	\noalign{\smallskip}\dis
	\widehat{\alpha}(t):= \max\limits_{x\in [-1,1]}\alpha(x,t)%={e^{2\lambda m \|\eta \|_\infty} - e^{\lambda(m\|\eta\|_\infty+\eta (1))}\over t(T-t)}
	=\alpha(1,t)=\alpha(-1,t)&
	\forall t\in (0,T),\\
	\noalign{\smallskip}\dis
		\widehat{\xi}(t) := \min\limits_{x\in [-1,1]}\xi(x,t)=%{e^{\lambda(m\|\eta\|_\infty+\eta (1))}\over t(T-t)}
		\xi(1,t)=\xi(-1,t)&\forall t\in (0,T),
\end{array}
\]	
	where $m>1$ and $\lambda >0$ is a sufficiently large constant (to be chosen later).

    We will establish a Carleman inequality that holds for the solutions to a simplified version of \eqref{adj:Stefanplain}. This will be later extended to the solutions to \eqref{adj:Stefanplain} and, consequently, to the adjoint states in \eqref{adj:Stefan}.
\begin{lemma}\label{lemma:nonlocal}
	Let us assume that $R\in L^\infty(0,T;L^2(-1,1))$, $N\in W^{1,\infty}(0,T;L^2(-1,1))$ and $d\in C^1([0,T])$ with $d(t)>d_*>0$ for all $t\in [0,T]$. 
	There exist constants  $\lambda _0\geq1$, $s_0\geq1$ and $C_0>0$ such that, for any~$\lambda \geq \lambda _0$, any~$s\ge s_0(T+T^2)$, 
	any~$(\psi_T,\gamma_T)\in H^1(-1,1)\times  \mathbb{R}$ satisfying \eqref{compatib_cond} and any source terms $f\in L^2(Q)$ and  $g\in L^2(0,T)$, the strong solution to
\begin{equation}\label{eq:adj_1}
    \left \{
        \begin{array}{lcl}
             \psi_t+d(t)\psi_{xx} =f &\mbox{in}&    Q,\\
            \noalign{\smallskip}\dis
	\psi(-1,\cdot)=0 &\mbox{in}&    (0,T),\\
    \noalign{\smallskip}\dis
	\psi(1,t) =\gamma(t)+(N(\cdot\,,t),\psi(\cdot\,,t))_2 &\mbox{in}&    (0,T),\\
	\noalign{\smallskip}\dis
	\psi(\cdot\,,T) = \psi_T &\mbox{in}&    (-1,1)\\
	\noalign{\smallskip}\dis
	\gamma_t(t) -(R(\cdot\,,t),\psi(\cdot\,,t))_2=g&\mbox{in}&    (0,T),\\
        	\noalign{\smallskip}\dis
       	\gamma(T) = \gamma_T &&     
        \end{array}
    \right.
\end{equation}
    satisfies
\begin{equation}\label{carleman:1}
	\begin{alignedat}{2}
    		&\jjnt_Q \left[(s\xi)^{-1}(|\psi_{xx}|^2+|\psi_t|^2)+\lambda^2(s\xi)|\psi_x|^2 +\lambda^4(s\xi)^3|\psi|^2\right]e^{-2s\alpha}\,dx\,dt\\
    		&\quad+\int_0^T\left[\lambda^3(s\widehat\xi)^3 |\psi(1,t)|^2 +\lambda (s\widehat\xi)(|\psi_x(-1,t)|^2
		+|\psi_x(1,t)|^2)\right]e^{-2s\widehat\alpha}\,dt  \\
    		& \leq C_0 \left(s^3\lambda^4\iil\xi^3 |\psi|^2 e^{-2s\alpha}\,dx\,dt+\jjnt_Q |f|^2e^{-2s \alpha}\,dx\,dt
		+\int_0^T|g|^2e^{-2s\widehat\alpha}\,dt\right).    
		%+s^{-1}\lambda^{-1}\int_0^T\widehat\xi^{-1}e^{-2s\widehat\alpha}|g|^2\,dt\right).    
     \end{alignedat}
	\end{equation}
\end{lemma}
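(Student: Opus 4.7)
The plan is to perform a direct Fursikov--Imanuvilov Carleman analysis on the parabolic equation of \eqref{eq:adj_1}, treating the non-homogeneous boundary condition at $x=1$ via boundary integration-by-parts, and then to couple the resulting PDE estimate to the ODE for $\gamma$ through the nonlocal boundary identity in order to bring in the $g$-contribution. The key feature exploited throughout is that $\eta$ attains its minimum at $x = \pm 1$ with $\eta_x(\pm 1) \neq 0$, so that $\widehat\alpha(t) = \alpha(\pm 1, t)$ and $\alpha_x(\pm 1,\cdot)$ has a definite sign pointing outward. This guarantees that the boundary contributions at $x = \pm 1$ arising from the Fursikov--Imanuvilov computation enter the LHS of \eqref{carleman:1} with the correct positive weights.

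I would first set $\theta := e^{-s\alpha}\psi$, so that $\theta$ satisfies a conjugated equation of the form $M_1\theta + M_2\theta = e^{-s\alpha} f$, with $M_1$ self-adjoint and $M_2$ skew-adjoint in $L^2(Q)$. From the identity $\|M_1\theta\|_{L^2(Q)}^2 + \|M_2\theta\|_{L^2(Q)}^2 + 2(M_1\theta, M_2\theta)_{L^2(Q)} = \|e^{-s\alpha}f\|_{L^2(Q)}^2$, expansion of the cross product via integration by parts in $x$ and $t$ (with $\lambda$ and $s$ sufficiently large) yields the dominant interior quantities $\lambda^4 (s\xi)^3 |\psi|^2$, $\lambda^2(s\xi)|\psi_x|^2$, and $(s\xi)^{-1}(|\psi_t|^2 + |\psi_{xx}|^2)$, each multiplied by $e^{-2s\alpha}$. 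Boundary contributions at $x = -1$ reduce (by $\psi(-1,\cdot) = 0$) to $\lambda(s\widehat\xi)|\psi_x(-1,t)|^2 e^{-2s\widehat\alpha}$, while at $x = 1$ the non-vanishing traces produce both $\lambda(s\widehat\xi)|\psi_x(1,t)|^2 e^{-2s\widehat\alpha}$ and $\lambda^3(s\widehat\xi)^3|\psi(1,t)|^2 e^{-2s\widehat\alpha}$. All of these are placed on the LHS. Localization of the interior observation to $\omega_0$ follows from the standard cutoff argument using $\chi \in C_c^\infty(\omega)$ equal to $1$ on $\omega_0$, with the resulting $|\psi_x|^2$ cross terms absorbed by the gradient bound already on the LHS.

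To finally bring in the source $g$, which appears only in the ODE, I would exploit the boundary identity $\psi(1,t) = \gamma(t) + (N(\cdot,t), \psi(\cdot,t))_2$ and the ODE $\gamma_t = (R,\psi)_2 + g$. The strategy is to perform a weighted $L^2$-in-time estimate on the ODE with weight $e^{-2s\widehat\alpha}$; the vanishing of this weight at $t = 0$ and $t = T$ kills the time-boundary terms produced by integration by parts in $t$, so that $\int_0^T |\gamma|^2 e^{-2s\widehat\alpha}\,dt$ is controlled by $\int_0^T|g|^2 e^{-2s\widehat\alpha}\,dt$ plus interior contributions from $(R,\psi)_2$. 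Combining with the boundary identity lets us pass between $|\gamma|^2$ and $|\psi(1,t)|^2$ (already present on the LHS) modulo the nonlocal inner product $(N,\psi)_2$, which is absorbed into the dominant interior term $\lambda^4 s^3 \xi^3 |\psi|^2 e^{-2s\alpha}$ by taking $s$ and $\lambda$ large and exploiting the pointwise bound $e^{-2s\alpha} \leq e^{-2s\widehat\alpha}$.

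The principal obstacle is this last step: carefully absorbing all the memory and nonlocal couplings arising from $\gamma$, $(N,\psi)_2$, and $(R,\psi)_2$ into the Carleman weights, while ensuring that the terminal datum $\gamma_T$ contributes nothing because $\widehat\alpha(T) = +\infty$. Once the book-keeping is completed, taking $\lambda \geq \lambda_0$ and $s \geq s_0(T + T^2)$ for suitably large $\lambda_0, s_0$ yields the desired inequality \eqref{carleman:1}.
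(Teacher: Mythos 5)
Your overall skeleton (conjugation $w=e^{-s\alpha}\psi$, splitting into self-adjoint and skew-adjoint parts, sign-definite boundary terms at $x=\pm1$ because $\eta$ attains its minimum there, localization by cutoff) is exactly the paper's route. However, there is a genuine gap at the heart of the argument: you never identify the one boundary term that is \emph{not} sign-definite and that makes this lemma nontrivial. Integrating $(dw_{xx},w_t)$ by parts in space produces $2\int_0^T\left[d\,w_tw_x\right]_{x=-1}^{x=1}dt$; at $x=-1$ this vanishes, but at $x=1$ the cross term $2\int_0^T d\,w_tw_x\big|_{x=1}\,dt$ survives and cannot be placed on either side with a good sign. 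After Cauchy--Schwarz it leaves a residual $-C\int_0^T(s\lambda\widehat\xi)^{-1}e^{-2s\widehat\alpha}|\psi_t(1,t)|^2\,dt$ on the right. The paper's key step --- and the only place where the ODE for $\gamma$ and the source $g$ genuinely enter --- is to differentiate the nonlocal boundary condition in time and substitute the ODE, obtaining $\psi_t(1,t)=(R(\cdot,t)+N_t(\cdot,t),\psi(\cdot,t))_2+(N(\cdot,t),\psi_t(\cdot,t))_2+g$, which converts this trace of $\psi_t$ into interior integrals of $|\psi|^2$ and $|\psi_t|^2$ (absorbable for $s,\lambda$ large, using $\widehat\xi^{-1}e^{-2s\widehat\alpha}\le\xi^{-1}e^{-2s\alpha}$) plus the $\int_0^T|g|^2e^{-2s\widehat\alpha}\,dt$ term of the right-hand side. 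Your sketch omits this term entirely, so as written the proof does not close.

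Your proposed coupling to the ODE targets the wrong quantity: you aim to control $\int_0^T|\gamma|^2e^{-2s\widehat\alpha}\,dt$ and to pass between $|\gamma|^2$ and $|\psi(1,t)|^2$, but $|\gamma|^2$ does not appear on the left of \eqref{carleman:1} (it is only a remark that it \emph{could} be added), and this manoeuvre does nothing to eliminate the $\psi_t(1,t)$ trace. A smaller but real slip: you invoke ``$e^{-2s\alpha}\le e^{-2s\widehat\alpha}$'' to absorb the nonlocal terms, but since $\widehat\alpha=\max_x\alpha$ the inequality runs the other way, $e^{-2s\widehat\alpha}\le e^{-2s\alpha}$; fortunately it is this correct direction that one actually needs for the absorption, so the idea is salvageable once the inequality is stated properly.
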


	Note that, in view of~$\eqref{eq:adj_1}_3$ and~$\eqref{eq:adj_1}_5$, we can also include weighted integrals of~$\gamma$ and~$\gamma_t$ in the left hand side of~\eqref{carleman:1}.
	
	The proof of Lemma~\ref{lemma:nonlocal}  is given in~Appendix~\ref{sec:app_C}.
	As already mentioned, this Carleman inequality is new.
	It is one of the main contributions in the paper. The main difficulty to overcome is that we have to deal with a non-local term on the boundary, both in the space and time variables.

\subsection{Well-posedness of the linearized system}

	%In the sequel, $(\cdot\,,\cdot)_2$ denotes the usual scalar product in $L^2(-1,1)$ and~$\|\cdot\|_2$ stands for the associated norm.
	The aim of this section is to prove the existence and uniqueness of a global solution to~\eqref{eq:conStefantransnulllinear}.

	For convenience, we will establish the existence and uniqueness of a strong solution to a similar, where (again) we have introduced general coefficients.

	More precisely, we have the following result:
	
\begin{proposition}\label{prop:sol_adj_7}
	Assume that $(a,R,N)$ belongs to the space~$L^2(0,T;L^{\infty}(-1,1))\times L^2(Q)\times L^\infty(0,T;L^2(-1,1))$ and~${\black\ol q\in W^{1,\infty}(0,T)}$, with $\ol q(t)\in (q_*,+\infty)$ for all $t\in [0,T]$. 
	Let $F\in L^2(Q)$, $G\in L^2(0,T)$, $z_0\in H^1_0(-1,1)$ and $h_0\in \mathbb{R}$ be given.
	There exists a unique strong solution in~$H^{1,2}_{0}(Q)\times H^1(0,T)$ to the system
\begin{equation}\label{eq:conStefantransnulllinear1}
\left\{
	\begin{array}{lcl}
		\bar q (t) z_t- z_{xx}+az_x+ Rh+Nz_x(1,\cdot)=F	& \mbox{ in }&Q,\\
 		z(-1,\cdot)= 0					& \mbox{ in }&(0,T),\\
		z(1,\cdot)=0					& \mbox{ in }&(0,T),\\
		z(\cdot\,,0)= z_0				& \mbox{ in }&(-1,1),\\
		h_t+ z_x(1,\cdot)=G		& \mbox{ in }& (0,T),\\
		h(0)= h_0, &&
	\end{array}
	\right.
\end{equation}
	such that the following inequality holds:
\begin{equation}\label{est:regadj}
\|z\|_{ H^{1,2}_{0}(Q)}^2+\|h\|_{H^1(0,T)}^2
\leq e^{C(1+T)}\left(
\|F\|_{L^2(Q)}^2+\|G\|_{L^2(0,T)}^2+\|z_0\|_{H^1_0(-1,1)}^2+|h_0|^2\right),
\end{equation}
where $C$ is a positive constant  depending on $a$, $R$, $N$ and $\ol q$ but independent of $T$.
\end{proposition}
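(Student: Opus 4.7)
The plan is to prove Proposition~\ref{prop:sol_adj_7} by a Faedo--Galerkin approximation together with tailored energy estimates that absorb the nonlocal boundary coupling $Nz_x(1,\cdot)$ into the parabolic dissipation. Since \eqref{eq:conStefantransnulllinear1} is linear, once \eqref{est:regadj} is established for a constructed solution, uniqueness follows immediately by applying it to the difference of two solutions; the substantive content is therefore existence together with the quantitative bound.

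Concretely, I would fix the $L^2(-1,1)$-orthonormal basis $\{e_k\}_{k\ge 1}$ of Dirichlet eigenfunctions of $-\partial_{xx}$ and seek approximations $z^n(x,t)=\sum_{k=1}^n c_k^n(t)\,e_k(x)$ and $h^n(t)$ satisfying the projection of the PDE onto $\operatorname{span}\{e_1,\dots,e_n\}$ coupled with the full ODE for $h^n$ and the natural initial data. Because $z^n_x(1,t)$ is a continuous linear functional of $(c_k^n)$ and $\bar q,a,R,N,F,G$ are integrable in time, this is a linear Carathéodory system in $\mathbb{R}^{n+1}$ with a unique absolutely continuous solution on $[0,T]$. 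For the uniform-in-$n$ estimate I would multiply the projected PDE by $-z^n_{xx}$, integrate over $(-1,1)$, and use $z^n_t(\pm1,\cdot)=0$ to obtain
\begin{equation*}
\tfrac{\bar q(t)}{2}\tfrac{d}{dt}\|z^n_x\|_2^2 + \|z^n_{xx}\|_2^2 = \int_{-1}^1\bigl(az^n_x + Rh^n + Nz^n_x(1,t) - F\bigr)\,z^n_{xx}\,dx,
\end{equation*}
while multiplying the ODE by $h^n$ gives $\tfrac{d}{dt}|h^n|^2 \le |G|^2+2|h^n|^2+|z^n_x(1,t)|^2$.

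The main --- and essentially the only --- difficulty is the trace $|z^n_x(1,t)|^2$, which after Young's inequality appears in both estimates and would create a vicious loop if one tried to bound it by substituting the equation for $\|z^n_{xx}\|_2$. The key step is to apply the one-dimensional interpolation inequality $|z^n_x(1,t)|^2 \le \epsilon\,\|z^n_{xx}\|_2^2 + C_\epsilon\,\|z^n_x\|_2^2$ directly, and choose $\epsilon$ small enough --- depending only on $\|N\|_{L^\infty(0,T;L^2)}$ and $\bar q_*$ --- so that the $\|z^n_{xx}\|_2^2$ contribution is absorbed into the left-hand side of the energy identity. After this absorption, the combined bound yields a differential inequality $y'(t)\le \alpha(t)y(t)+\beta(t)$ for $y:=\|z^n_x\|_2^2+|h^n|^2$, with $\alpha\in L^1(0,T)$ controlled by $\|a\|_{L^2(L^\infty)}^2, \|R\|_{L^2(Q)}^2, \|N\|_{L^\infty(L^2)}^2$ and $\beta\in L^1(0,T)$ by $\|F\|_{L^2}^2, \|G\|_{L^2}^2$. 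Gronwall's lemma, time integration, and a final algebraic use of the equation to bound $\|z^n_t\|_{L^2(Q)}$ then give \eqref{est:regadj} uniformly in $n$; a standard weak-compactness argument in $H^{1,2}_0(Q)\times H^1(0,T)$ produces a strong solution inheriting the estimate, and uniqueness follows as noted.
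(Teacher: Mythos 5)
Your proposal is correct and follows essentially the same route as the paper's proof in Appendix~\ref{sec:app_B}: a Faedo--Galerkin scheme with the Dirichlet eigenfunction basis, energy estimates in which the trace term $|z^n_x(1,t)|^2$ is absorbed via the one-dimensional interpolation inequality and Young's inequality, Gronwall, weak compactness, and uniqueness by linearity. The only cosmetic difference is that the paper also tests with $z_n$ and $z_n'$ to get the $\|z_t\|_{L^2(Q)}$ bound directly, whereas you recover it algebraically from the projected equation; both are valid.
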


	The proof is given in Appendix~\ref{sec:app_B}.
	
	At this point, we will introduce the definition of {\it solution by transposition} to~\eqref{eq:conStefantransnulllinear1}:
	
\begin{definition}\label{weak_heat_transposition}
	It will be said that $(z,h)\in L^2(Q)\times L^2(0,T)$ is a solution by transposition to~\eqref{eq:conStefantransnulllinear1} if
 \begin{equation}\label{eq:gunogdos}
 	\jjnt_Q z(x,t)f(x,t)\,dx\,dt+\int_0^T h(t) g(t)\,dt= M(f,g)\quad\forall (f,g)\in L^2(Q)\times L^2(0,T),
\end{equation}
	where the linear form $M:L^2(Q)\times L^2(0,T) \mapsto \mathbb{R}$ is given by
$$
%\cal L^*(\phi,\gamma)=(-(\ol s\vp)_t-\vp_{xx}+\frac{\ol{s}'}{2}(x\vp)_x), \int_0^1\ol w_t(x,t)\vp(x,t)\,dx-\beta\gamma')
	M(f, g):=\jjnt_Q F(x,t)\varphi(x,t)\,dx\,dt+\bar q(0)(z_0,\varphi(\cdot\,,0))_2
	+h_0\gamma(0)+\int_0^T G(t)\gamma(t)\,dt
$$
%\ol{s}(0)\int_0^1 z_0(x)\cdot\phi(x,0)\,dx+\beta h_0\kappa(0)+\jjnt_Q f_1\phi+\int_0^Tf_2\kappa
	and $(\varphi,\gamma)$ is the unique strong solution to
\begin{equation}\label{adj:Stefan_transposition}
	\left\{
		\begin{array}{lcl}
			%-\vp_t-B\vp_{xx}+D\vp_x+K\vp=G& \mbox{ in }&Q,\\
			-(\bar q\varphi)_t- \varphi_{xx}-\left(a\varphi\right)_x=f& \mbox{ in }&Q,\\
			\noalign{\smallskip}\displaystyle
			\varphi(-1,\cdot)=0& \mbox{ in }&(0,T),\\
			\noalign{\smallskip}\displaystyle
			\varphi(1,t)=\gamma(t)+(N(\cdot\,,t), \varphi(\cdot\,,t))_2& \mbox{ in }&(0,T),\\
			\noalign{\smallskip}\displaystyle
			\varphi(\cdot\,,T)=0& \mbox{ in }&(-1,1),\\
			\noalign{\smallskip}\displaystyle
			\gamma'(t)=(R(\cdot\,,t), \varphi(\cdot\,,t))_2+g& \mbox{ in }& (0,T),\\
			\noalign{\smallskip}\displaystyle
			\gamma(T)=0.
		\end{array}
	\right.
\end{equation}
\end{definition}

	Since the boundary and final conditions in~\eqref{adj:Stefan_transposition} satisfy the appropiate compatibility conditions \eqref{compatib_cond}, Proposition~\ref{prop:sol_adj} guarantees the existence and uniqueness of a strong solution to \eqref{adj:Stefan_transposition}.
	Consequently, Definition \ref{weak_heat_transposition} makes sense.

%\begin{rmq}[Relation between strong solution and solution by transposition]
%\label{rq:strgsoltrassol}
%A strong solution of \eqref{eq:conStefantransnulllinear} is also a solution by transposition, which can be proved by integration by parts. 
%\end{rmq}

\begin{proposition}\label{ex_un_mixed_sol}
	{Let the assumptions in~Proposition~\ref{prop:sol_adj_7} be satisfied and, additionally, consider that
	$a\in L^2(0,T;W^{1,\infty}(-1,1))$ and $N\in H^1(0,T;L^2(-1,1))$. Then, there exists a unique solution by transposition to~\eqref{eq:conStefantransnulllinear1}. }
\end{proposition}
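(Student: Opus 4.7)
The approach is the standard duality argument: I view $(z,h)$ as the Riesz representer in $L^2(Q)\times L^2(0,T)$ of the linear form $M$ defined in \eqref{eq:gunogdos}. The only nontrivial content is verifying that Proposition~\ref{prop:sol_adj} applies to the adjoint problem \eqref{adj:Stefan_transposition} under the present regularity assumptions on $(a,N,\bar q)$, and then bounding $M$ uniformly on $L^2(Q)\times L^2(0,T)$.

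First, I would rewrite \eqref{adj:Stefan_transposition} in the plain form \eqref{adj:Stefanplain}. Expanding the two divergence terms,
\begin{equation*}
-(\bar q\varphi)_t - \varphi_{xx} - (a\varphi)_x = -\bar q\,\varphi_t - \varphi_{xx} - a\,\varphi_x - (\bar q_t + a_x)\,\varphi,
\end{equation*}
so \eqref{adj:Stefan_transposition} is of the form \eqref{adj:Stefanplain} with the same ``$a$'' and with ``$b$'' $:=\bar q_t+a_x$. The assumptions $\bar q\in W^{1,\infty}(0,T)$ and $a\in L^2(0,T;W^{1,\infty}(-1,1))$ give $b\in L^2(0,T;L^\infty(-1,1))$, and the hypothesis $N\in H^1(0,T;L^2(-1,1))$ is exactly the one needed in Proposition~\ref{prop:sol_adj}. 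Since $\varphi_T=0$ and $\gamma_T=0$ satisfy the compatibility condition~\eqref{compatib_cond} trivially, Proposition~\ref{prop:sol_adj} produces a unique strong solution $(\varphi,\gamma)\in H^{1,2}(Q)\times H^1(0,T)$ with
\begin{equation*}
\|\varphi\|_{H^{1,2}(Q)}+\|\gamma\|_{H^1(0,T)}\leq C\bigl(\|f\|_{L^2(Q)}+\|g\|_{L^2(0,T)}\bigr).
\end{equation*}

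Next, I would show that $M$ is continuous on $L^2(Q)\times L^2(0,T)$. Using the continuous embeddings $H^{1,2}(Q)\hookrightarrow C^0([0,T];H^1(-1,1))$ and $H^1(0,T)\hookrightarrow C^0([0,T])$, the traces $\varphi(\cdot,0)$ and $\gamma(0)$ appearing in $M(f,g)$ are controlled by the corresponding norms of $\varphi$ and $\gamma$. Applying Cauchy--Schwarz in the definition of $M$ and invoking the estimate above yields
\begin{equation*}
|M(f,g)|\leq C\bigl(\|F\|_{L^2(Q)}+\|z_0\|_{H^1_0(-1,1)}+|h_0|+\|G\|_{L^2(0,T)}\bigr)\bigl(\|f\|_{L^2(Q)}+\|g\|_{L^2(0,T)}\bigr).
\end{equation*}
The Riesz representation theorem in the Hilbert space $L^2(Q)\times L^2(0,T)$ then produces a unique $(z,h)$ such that \eqref{eq:gunogdos} holds; this is the desired solution by transposition, and uniqueness is immediate since \eqref{eq:gunogdos} characterises $(z,h)$ by testing against all elements of $L^2(Q)\times L^2(0,T)$.

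The main subtlety is thus the first step: one must check that the divergence structure of \eqref{adj:Stefan_transposition} is compatible with the general form treated by Proposition~\ref{prop:sol_adj}, which explains precisely why this statement strengthens the regularity of $a$ from $L^2(0,T;L^\infty)$ to $L^2(0,T;W^{1,\infty})$ and of $N$ from $L^\infty(0,T;L^2)$ to $H^1(0,T;L^2)$, compared with the direct well-posedness result for \eqref{eq:conStefantransnulllinear1} in Proposition~\ref{prop:sol_adj_7}.
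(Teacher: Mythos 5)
Your proposal is correct and follows the same route as the paper: continuity of the linear form $M$ via the strong-solution estimate of Proposition~\ref{prop:sol_adj} for the adjoint system \eqref{adj:Stefan_transposition}, followed by the Riesz representation theorem in $L^2(Q)\times L^2(0,T)$. In fact you are more explicit than the paper in identifying $b=\bar q_t+a_x$ after expanding the divergence terms and in explaining why the strengthened hypotheses on $a$ and $N$ are exactly what is needed for Proposition~\ref{prop:sol_adj} to apply.
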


\begin{proof}
	Note that  $M:L^2(Q)\times L^2(0,T) \mapsto \mathbb{R}$ is a continuous linear form.
	Indeed, since $(\varphi,\gamma)$ is the unique strong solution, from Proposition~\ref{prop:sol_adj} we have
\[
	\|\varphi\|_{ H^{1,2}(Q)}^2+\|\gamma\|_{H^1(0,T)}^2\leq
	 C \|(f,g)\|^2_{L^2(Q)\times L^2(0,T)}.	
\]
	Therefore, we deduce from {\it Riesz Representation Theorem} that there exists exactly one  solution by transposition to~\eqref{eq:conStefantransnulllinear1}. 
\end{proof}

	Notice that strong solutions to \eqref{eq:conStefantransnulllinear1} are solutions by transposition.

%%%%%%%%%%%%%%%%%%%%%%%%%%%%%%%%%%%%%%%%%%
%%%%%%%%%%%%%%%%%%%%%%%%%%%%%%%%%%%%%%%%%%
%%%%%%%%%%%%%%%%%%%%%%%%%%%%%%%%%%%%%%%%%%
%% REFORMULATION OF THE free-boundary PROBLEM   %%%%%%%%%
%%%%%%%%%%%%%%%%%%%%%%%%%%%%%%%%%%%%%%%%%%
%%%%%%%%%%%%%%%%%%%%%%%%%%%%%%%%%%%%%%%%%%
%%%%%%%%%%%%%%%%%%%%%%%%%%%%%%%%%%%%%%%%%%

\section{Exact controllability to the trajectories}\label{ECT}

	This section is devoted to prove the null controllability of the linear system~\eqref{eq:conStefantransnulllinear} and the local null controllability of the nonlinear PDE-ODE system~\eqref{eq:conStefantransnull_extended}.

%%%%%%%%%%%%%%%%%%%%%%%%%%%%%%%%%%%%%%%%%%
%%%%%%%%%%%%%%%%%%%%%%%%%%%%%%%%%%%%%%%%%%
%%%%%%%%%%%%%%%%%%%%%%%%%%%%%%%%%%%%%%%%%%
%% REFORMULATION OF THE free-boundary PROBLEM   %%%%%%%%%
%%%%%%%%%%%%%%%%%%%%%%%%%%%%%%%%%%%%%%%%%%
%%%%%%%%%%%%%%%%%%%%%%%%%%%%%%%%%%%%%%%%%%
%%%%%%%%%%%%%%%%%%%%%%%%%%%%%%%%%%%%%%%%%%

\subsection{Controllability of the linearized problem}

   	We will present a suitable Carleman inequality for the solutions to a properly chosen adjoint system. This will imply 
   	the null controllability of the linearized system  \eqref{eq:conStefantransnulllinear} (see Proposition~\ref{prop:contlin} below). 
   	This result will be essential for the proof of Proposition \ref{onephase:fixeddomaincontrol_exdended} (the local null controllability of~\eqref{eq:conStefantransnull_extended}.

\subsubsection{A Carleman inequality}

%	We will present a suitable new Carleman inequality for the adjoint system \eqref{adj:Stefan}.
%	As we will see the null controllability for system \eqref{eq:conStefantransnulllinear}  will follow from a such Carleman inequality.
%	In order to prove this  Carleman inequality we will use Lemma \ref{lemma:nonlocal}. More precisely, we have:

	The following holds:
\begin{theorem}\label{cl:carlemanadj}
	%Let the assumptions in~Proposition~\ref{lemma:nonlocal} be satisfied.
	Assume that~$(\bar p,\bar q)$ belong to the space~$[W^{1,\infty}(0,T;H^{1}(-1,1))\cap H^{1,2}_0(Q)]\times W^{1,\infty}(0,T)$ with $\ol q(t)\in (q_*,+\infty)$ for all $t\in [0,T]$.
	There exist constants  $\lambda _0\geq1$,  $s_0\geq1$ and $C_0>0$  such that,  for any $\lambda \ge \lambda _0$, any~$s\ge s_0(T+T^2)$, any $\vp_T\in H^1(-1,1)$  any~$\gamma_T\in \mathbb{R}$ with
\begin{equation}\label{compatib_cond_2}
 \vp_T(-1)=0\quad \hbox{and}\quad\vp_T(1)=2\gamma_T + \frac{1}{\beta}\int_{-1}^1 \bar p_x(x,T) x \vp_T(x)\,dx
\end{equation}
	and any right hand sides $g_1\in L^2(Q)$ and $g_2\in L^2(0,T)$, the strong solution to~\eqref{adj:Stefan} satisfies:
	\begin{equation}\label{carleman:2}
\begin{array}{l} \dis
\!\!\!\!\jjnt_Q \left[(s\xi)^{-1}(|\vp_t|^2 + |\vp_{xx}|^2)+\lambda^2(s\xi)|\vp_x|^2 
+\lambda^4(s\xi)^3 |\vp|^2\right]e^{-2s\alpha}\,dx\,dt \\
\dis+\int_0^T\left[ |\gamma_t|^2+\lambda (s\widehat\xi)\left(|\varphi_x(-1,t)|^2
		+|\varphi_x(1,t)|^2\right)+\lambda^3(s\widehat\xi)^3\left(|\varphi(1,t)|^2+|\gamma|^2\right)\right]e^{-2s\widehat\alpha}\,dt
\\ \quad \dis
    %  +\int_0^T (\lambda^3(s\widehat\xi)^3|\vp(L,t)|^2+
    %\lambda (s\widehat\xi) |\vp_x(L,t)|^2))e^{-2s\widehat\alpha}\,dt +
%\int_0^T\lambda \left((s\widehat\xi)^3 |\gamma|^2+|\gamma'|^2\right)
%e^{-2s\alpha}\,dt \\
\leq C_0 \left(\jjnt_Q|g_1|^2 e^{-2s \alpha}\,dx\,dt
+ \int_0^T |g_2|^2e^{-2s\widehat\alpha}\,dt+s^3\lambda^4\iil  \xi^3|\vp|^2e^{-2s \alpha}\,dx\,dt \right).
\end{array}
	\end{equation}
\end{theorem}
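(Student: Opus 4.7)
The plan is to deduce Theorem~\ref{cl:carlemanadj} from Lemma~\ref{lemma:nonlocal} by recasting~\eqref{adj:Stefan} in the form of~\eqref{eq:adj_1} and then bootstrapping to recover the left-hand-side quantities in~\eqref{carleman:2} that are absent from~\eqref{carleman:1}.

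First I would divide the parabolic equation in~\eqref{adj:Stefan} by $-\bar q(t)$ to obtain
\begin{equation*}
\vp_t + \frac{1}{\bar q(t)}\vp_{xx} = f, \qquad f := -\frac{g_1}{\bar q} + \frac{\bar p_x(1,t)}{\beta\bar q}\bigl(\vp - x\vp_x\bigr),
\end{equation*}
and identify $d(t) := 1/\bar q(t)$, $R(x,t) := (2/\beta)\bar p_t(x,t)$, and $N(x,t) := (x/\beta)\bar p_x(x,t)$. Our regularity assumptions on $(\bar p,\bar q)$ ensure that $d \in W^{1,\infty}(0,T)$ with $d > 1/\|\bar q\|_\infty$, $R \in L^\infty(0,T;L^2(-1,1))$, and $N \in W^{1,\infty}(0,T;L^2(-1,1))$, while~\eqref{compatib_cond_2} yields the compatibility~\eqref{compatib_cond} required by Lemma~\ref{lemma:nonlocal}. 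Applying that lemma produces~\eqref{carleman:1} with $\psi$ replaced by $\vp$; the contribution $\jjnt_Q |f|^2 e^{-2s\alpha}\,dx\,dt$ on the right is controlled by
\begin{equation*}
C\jjnt_Q |g_1|^2 e^{-2s\alpha}\,dx\,dt + C\jjnt_Q \bigl(|\vp|^2 + |\vp_x|^2\bigr) e^{-2s\alpha}\,dx\,dt,
\end{equation*}
using boundedness of $\bar p_x(1,\cdot)$ in $L^\infty(0,T)$ coming from the trace regularity of $\bar p$; the perturbation terms in $\vp, \vp_x$ are absorbed by the dominant left-hand-side terms $\lambda^4(s\xi)^3|\vp|^2$ and $\lambda^2(s\xi)|\vp_x|^2$ once $s,\lambda$ are chosen sufficiently large.

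Next I would add the remaining pieces of~\eqref{carleman:2}, namely $\int_0^T |\gamma_t|^2 e^{-2s\widehat\alpha}\,dt$ and $\int_0^T\lambda^3(s\widehat\xi)^3|\gamma|^2 e^{-2s\widehat\alpha}\,dt$. For the first, I would use the ODE in~\eqref{adj:Stefan} to write $|\gamma_t|^2 \leq C\|\vp(\cdot,t)\|_{L^2}^2 + 2|g_2|^2$ and then exploit $e^{-2s\widehat\alpha} \leq e^{-2s\alpha}$ to bound the resulting term by $\jjnt_Q\lambda^4(s\xi)^3|\vp|^2 e^{-2s\alpha}$ (up to a $\lambda,s$-factor to spare) plus the $g_2$ contribution. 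For the second, I would use the nonlocal boundary condition $\vp(1,t) = \gamma(t) + (N(\cdot,t),\vp(\cdot,t))_2$ to get $|\gamma(t)|^2 \leq 2|\vp(1,t)|^2 + C\|\vp(\cdot,t)\|_{L^2}^2$; the first piece is then absorbed by the boundary term $\int_0^T\lambda^3(s\widehat\xi)^3|\vp(1,t)|^2 e^{-2s\widehat\alpha}\,dt$ already present in~\eqref{carleman:1}, and the second by the interior term $\lambda^4(s\xi)^3|\vp|^2 e^{-2s\alpha}$, the latter absorption being legitimate because $\widehat\xi \leq \xi$ and $\widehat\alpha \geq \alpha$ pointwise.

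All the substantive work lies inside Lemma~\ref{lemma:nonlocal} (proved in Appendix~\ref{sec:app_C}); once that lemma is available, Theorem~\ref{cl:carlemanadj} is essentially a routine perturbation-and-absorption argument, whose main subtlety is organizing the inequalities so that every newly introduced error has a spare power of $\lambda$ (or of $s\xi$) available for absorption. The only technical nuisance I anticipate is the mismatch between the $C^1$ hypothesis on $d$ in Lemma~\ref{lemma:nonlocal} and the mere $W^{1,\infty}$ regularity of $1/\bar q$ available here; this is handled either by direct inspection of the proof in Appendix~\ref{sec:app_C} (where the $C^1$ hypothesis appears not to be used essentially) or by a standard density argument on $\bar q$.
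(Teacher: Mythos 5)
Your proposal is correct and follows essentially the same route as the paper: apply Lemma~\ref{lemma:nonlocal} with $d=1/\bar q$, $N=(x/\beta)\bar p_x$, $R=(2/\beta)\bar p_t$, $g=g_2$ and the source $f=-\bar q^{-1}[g_1+\beta^{-1}\bar p_x(1,\cdot)(x\vp_x-\vp)]$, then absorb the lower-order terms in $f$ into the dominant weighted integrals of $|\vp|^2$ and $|\vp_x|^2$. The only difference is that you spell out details the paper leaves implicit, namely the recovery of the $\gamma$ and $\gamma_t$ terms from $\eqref{adj:Stefan}_3$ and $\eqref{adj:Stefan}_5$ (which the paper delegates to the remark following Lemma~\ref{lemma:nonlocal}) and the harmless $C^1$ versus $W^{1,\infty}$ mismatch for $d$.
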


\begin{proof}
	Let us apply~Lemma~\ref{lemma:nonlocal} with the following data:
$$
d={1\over \bar q},\ \ f=-{1\over\bar q}\left[g_1+\frac{\bar p_x(1,\cdot)}{\beta}(x\vp_x-\vp)\right], \ \ N(x,t)= \frac{x}{\beta}  \bar p_x(x,t), \ \ R={2\over \beta}\bar p_t \ \text{ and } \ g=g_2.
$$
	We obtain:
\[
	\begin{alignedat}{2}
&\!\!\!\!\jjnt_Q\!\! \left[(s\xi)^{-1}(|\vp_t|^2 + |\vp_{xx}|^2)+\lambda^2(s\xi)|\vp_x|^2 
+\lambda^4(s\xi)^3 |\vp|^2\right]e^{-2s\alpha}\,dx\,dt\\
&\dis+\int_0^T\left[|\gamma_t|^2 +\lambda (s\widehat\xi)\left(|\varphi_x(-1,t)|^2
		+|\varphi_x(1,t)|^2\right)+\lambda^3(s\widehat\xi)^3 \left(|\gamma|^2+ |\varphi(1,t)|^2\right)\right]e^{-2s\widehat\alpha}\,dt \\ \quad \dis
    		& \leq C_0 \left(s^3\lambda^4\iil\xi^3 |\varphi|^2 e^{-2s\alpha}\,dx\,dt+\jjnt_Q |f|^2e^{-2s \alpha}\,dx\,dt
		+\int_0^T|g|^2e^{-2s\widehat\alpha}\,dt\right).    
		%+s^{-1}\lambda^{-1}\int_0^T\widehat\xi^{-1}e^{-2s\widehat\alpha}|g|^2\,dt\right).    
     \end{alignedat}
\]

	Clearly, one can absorb the lower order terms in $f$ and obtain \eqref{carleman:2}.
	% and then use $\eqref{adj:Stefan}_3$--$\eqref{adj:Stefan}_5$ to obtain the estimates for~$\gamma$ and~$\gamma_t$.
\end{proof}

\subsubsection{Null controllability with nonhomogeneities}

	In this section we prove the null controllability property of \eqref{eq:conStefantransnulllinear}  with source terms that decay exponentially as $t\to T^-$.
	As we will see below, this result will be useful to prove the local null controllability of \eqref{eq:conStefantransnull}.
	
	Before, it will be convenient to deduce a second Carleman inequality with weights that do not vanish at $t = 0$.
	
	More precisely, let the function $r = r(t)$ be given by
\begin{equation}\label{def:l(t)}
r(t)=\begin{cases}
T^2/4 \ \ &  \mbox{ in } [0,T/2], \\
t(T-t) \ \ & \mbox{ in } [T/2,T]
\end{cases}
\end{equation}
	and let us set $D_1=(-1,1)\times (0,T/2)$, $D_2=(-1,1)\times (T/2,T)$,
\begin{equation}\label{eq:weightsbeta}
\zeta (x,t):=\frac{e^{2\lambda m \|\eta \|_\infty} - e^{\lambda(m\|\eta\|_\infty+\eta (x))}}{r(t)} \quad\hbox{and}\quad \mu (x,t):= \frac{e^{\lambda(m\|\eta\|_\infty+\eta (x))}}{r(t)} \quad \forall (x,t)\in Q.
\end{equation}
	Let us also introduce the notation: 
$$
%{(T^2l(t/T))^m},
 \widehat\zeta(t):=\!\!\max_{x\in [-1,1]}\!\!\zeta(x,t),~ \widehat\mu(t):=\!\!\min_{x\in [-1,1]}\!\mu(x,t),~ \zeta^*(t):=\!\!\min_{x\in [-1,1]}\!\!\zeta(x,t),~ \mu^*(t):=\!\!\max_{x\in [-1,1]}\!\mu(x,t)\quad \forall t\in(0,T)
$$
	and
$$
	\rho_0(t):=e^{s\zeta^*(t)},~ \rho_1(t):=  e^{s\widehat\zeta(t)},~ \rho_2(t):=\mu^{*-3/2}(t)e^{s\zeta^*(t)}, ~ \rho_3(t):=e^{s\widehat \zeta(t)}\widehat\mu^{-3/2}(t),~ \rho_4(t):=\rho_3^{1/2}(t)\quad \forall t\in(0,T).%e^{{s\over 2}\widehat \zeta(t)}\widehat\mu^{-3/4}(t).    
$$
\begin{rmq}
	Notice that $e^{s\widehat\zeta}$ and $e^{s\zeta^*}$ blow up exponentially as $t\to T^-$ and $\widehat\mu$ and $\mu^*$ blow up polynomially as $t\to T^-$.
	
\end{rmq}
\begin{rmq}\label{bounds_weight}
	It is not difficult to deduce the following:
\begin{itemize}
	\item Since $\rho_4^{-1}\in L^\infty(0,T)$, we have that $\rho_4\rho_3^{-1}=\rho_4^{-1}\in L^\infty(0,T)$.
	
	\item If we take $\lambda_0$ large enough, %\footnote{%$m\geq 1+{\ln 2\over \|\eta\|_\infty}$ or 
	 for instance ${\lambda _0} \geq{\ln 2\over \|\eta \|_\infty (m -1)}$, we have that $e^{\lambda m \|\eta \|_\infty} - 2e^{\lambda \|\eta\|_\infty}+ e^{\lambda\eta (1)}>0$.
	%derivada en $\lambda$: $ \|\eta \|_\infty e^{\lambda \|\eta\|_\infty}(m e^{\lambda \|\eta \|_\infty (m -1)} - 2)+ \eta (1)e^{\lambda\eta (1)}$
	Therefore, $\rho_4\rho_2^{-1}\in L^\infty(0,T)$.
	
	\item From $\rho_{4,t}:= e^{s\widehat \zeta/ 2}({s\over2}\widehat\mu^{-3/4}\widehat\zeta_t-{3\over4}\widehat\mu^{-7/4}\widehat\mu_t)$ and by taking $\lambda_0$ large enough, 
	we have that $\rho_{4,t}\rho_0^{-1}\in L^\infty(0,T)$.
	
	%(\rho_{4,t}\rho_0^{-1})\rho_0\widehat z
\end{itemize}
\end{rmq}

	An estimate with such weights is given in the following result. In the proof, we will use Theorem \ref{cl:carlemanadj} and classical energy estimates.
\begin{proposition}\label{lm:caraltweights}
	Under the conditions in~Theorem~\ref{cl:carlemanadj},
%for any $\lambda \ge \lambda _0$, any~$s\ge s_0(T+T^2)$, any~$\vp_T\in H^1(-1,1)$ with $\vp_T(-1)=0$, any~$\gamma_T\in \mathbb{R}$ such that the compatibility condition~\eqref{compatib_cond_2} is satisfied and any source terms $g_1\in L^2(Q)$ and $g_2\in L^2(0,T)$, 
	the unique strong solution to~\eqref{adj:Stefan} satisfies:
\begin{equation}\label{carleman:3}
\begin{alignedat}{2}
    &\!\!\!+\int_0^{T}\left[|\gamma_t|^2+\widehat\mu\left(|\varphi_x(-1,t)|^2+|\varphi_x(1,t)|^2\right)+\widehat\mu^3\left(|\gamma|^2+|\varphi(1,t)|^2\right)\right]e^{-2s\widehat\zeta}\,dt \\
&+\iint_Q\!\! \left[\mu^{-1}(|\vp_t|^2 + |\vp_{xx}|^2)+\mu|\vp_x|^2 
+\mu^3 |\vp|^2\right]e^{-2s\zeta}\,dx\,dt+\|\varphi(\cdot\,,0)\|_{H^1(-1,1)}^2+|\gamma(0)|^2\\  \dis
		&  \le C_2   \left(\jjnt_Q|g_1|^2 e^{-2s \zeta^*}\,dx\,dt
+ \int_0^T |g_2|^2e^{-2s\widehat\zeta}\,dt+\iil  (\mu^*)^{3}|\vp|^2e^{-2s \zeta^*}\,dx\,dt \right),
\end{alignedat}
\end{equation}
	for a positive constant $C_2$ depending on $T$, $s$ and $\lambda$, with $s$ and $\lambda$ as in Theorem \ref{cl:carlemanadj}.
\end{proposition}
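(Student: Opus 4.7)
The plan is to split the time interval into $[0,T/2]$ and $[T/2,T]$ and exploit the specific shape of $r(t)$. On $[T/2,T]$ we have $r(t)=t(T-t)$, so the new weights satisfy $\zeta=\alpha$ and $\mu=\xi$; therefore the portion of the left-hand side of~\eqref{carleman:3} coming from $D_2$ is immediately controlled by Theorem~\ref{cl:carlemanadj}. Similarly $\widehat\zeta=\widehat\alpha$ and $\widehat\mu=\widehat\xi$ on $[T/2,T]$, so the boundary integrals in $t\in(T/2,T)$ are also controlled. This gives the ``time-decaying'' half of the estimate for free.

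The remaining piece to produce is the control on $D_1=(-1,1)\times(0,T/2)$ together with $\|\varphi(\cdot,0)\|_{H^1(-1,1)}^2+|\gamma(0)|^2$. For this I would use a classical energy/cut-off argument. Introduce $\theta\in C^\infty([0,T])$ with $\theta\equiv1$ on $[0,T/2]$, $\theta\equiv0$ on $[3T/4,T]$, $0\le\theta\le1$, and $|\theta'|\le C/T$. Set $\psi:=\theta\varphi$ and $\delta:=\theta\gamma$. A direct computation shows that $(\psi,\delta)$ satisfies a system of the same form as~\eqref{adj:Stefan} in which the right-hand sides become $\widetilde g_1:=\theta g_1-\bar q\,\theta'\varphi$ and $\widetilde g_2:=\theta g_2+\theta'\gamma$, and the final data are identically zero. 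Crucially, since $\theta$ depends only on $t$, the nonlocal boundary condition at $x=1$ is preserved: $\psi(1,t)=\delta(t)+(N(\cdot,t),\psi(\cdot,t))_2$, and the compatibility condition~\eqref{compatib_cond} holds trivially at $t=T$. Applying Proposition~\ref{prop:sol_adj} to $(\psi,\delta)$ yields
\[
\|\psi\|_{H^{1,2}(Q)}^2+\|\delta\|_{H^1(0,T)}^2\le C\bigl(\|g_1\|_{L^2(Q)}^2+\|g_2\|_{L^2(0,T)}^2+\|\theta'\varphi\|_{L^2(Q)}^2+\|\theta'\gamma\|_{L^2(0,T)}^2\bigr).
\]

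Because $\theta\equiv1$ on $[0,T/2]$ and $H^{1,2}\hookrightarrow C([0,T];H^1(-1,1))$, the left-hand side above dominates $\|\varphi(\cdot,0)\|_{H^1(-1,1)}^2+|\gamma(0)|^2$ as well as the full $H^{1,2}$-norm of $\varphi$ and $H^1$-norm of $\gamma$ on the subinterval $(0,T/2)$. On $[0,T/2]$ the weights $\zeta,\mu,\widehat\zeta,\widehat\mu$ are bounded above and below by positive constants depending only on $T,s,\lambda$, so this yields the desired weighted estimate on $D_1$ (with a constant absorbed into $C_2$). The boundary terms $\varphi_x(\pm1,t)$ on $(0,T/2)$ are controlled via the trace theorem applied to $\varphi\in L^2(0,T/2;H^2)$.

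It remains to absorb the two error terms $\|\theta'\varphi\|_{L^2(Q)}^2$ and $\|\theta'\gamma\|_{L^2(0,T)}^2$. Since $\theta'$ is supported in $[T/2,3T/4]$ — an interval where $t$ stays uniformly away from both $0$ and $T$ — the weights $\mu^3 e^{-2s\zeta}$ and $\widehat\mu^3 e^{-2s\widehat\zeta}$ are there bounded below by a positive constant (depending on $T,s,\lambda$). Hence
\[
\|\theta'\varphi\|_{L^2(Q)}^2\le C\iint_{D_2}\mu^3|\varphi|^2 e^{-2s\zeta}\,dx\,dt,\qquad \|\theta'\gamma\|_{L^2(0,T)}^2\le C\int_{T/2}^{3T/4}\widehat\mu^3|\gamma|^2 e^{-2s\widehat\zeta}\,dt,
\]
and both right-hand sides are dominated by the left-hand side of the $D_2$-estimate obtained from Theorem~\ref{cl:carlemanadj}, and thus by the right-hand side of~\eqref{carleman:3}. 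Combining the Carleman inequality on $[T/2,T]$ with the energy estimate on $[0,T/2]$ and absorbing the cutoff errors yields~\eqref{carleman:3}.

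The main obstacle is essentially bookkeeping: verifying that the cutoff preserves the delicate nonlocal boundary/ODE coupling so that Proposition~\ref{prop:sol_adj} genuinely applies to $(\psi,\delta)$, and tracking carefully the $T$-, $s$-, and $\lambda$-dependent bounds on the weights over the three relevant subintervals $[0,T/2]$, $[T/2,3T/4]$, $[3T/4,T]$ so that every error term ends up on the correct side with a constant that can be absorbed into $C_2$.
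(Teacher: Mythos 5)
Your proposal is correct and follows essentially the same route as the paper: the authors likewise split at $t=T/2$, use the identity $\zeta=\alpha$, $\mu=\xi$ on $[T/2,T]$ to invoke Theorem~\ref{cl:carlemanadj} there, and apply the energy estimate of Proposition~\ref{prop:sol_adj} to $(\kappa\varphi,\kappa\gamma)$ with a cutoff $\kappa$ equal to $1$ on $[0,T/2]$ and vanishing on $[3T/4,T]$, absorbing the commutator terms supported in $[T/2,3T/4]$ into the left-hand side of the $D_2$ Carleman estimate. Your bookkeeping of the modified sources $\widetilde g_1$, $\widetilde g_2$ and of the preserved nonlocal boundary coupling matches the paper's argument.
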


\begin{proof}
	It suffices to start from~\eqref{carleman:2} and split the left hand side in two parts, respectively corresponding to the restrictions of~$\varphi$
	to $D_1$ and $D_2$ and the corresponding restrictions of $\gamma$ to~$(0,T/2)$ and~$(T/2,T)$. 

		Let us start by proving the following estimate for system \eqref{adj:Stefan}:
\begin{equation}\label{energy_est}
\begin{alignedat}{2}
		&\|\gamma\|^2_{H^1(0,T/2)}+
		\|\varphi\|^2_{L^2(0,T/2;H^2(-1,1))}+ \|\varphi_t\|^2_{L^2(D_1)}\\
		&\qquad\le~
		e^{C(1+T)}\bigg(\|(g_1,g_2)\|^2_{L^2(0,3T/4;L^2(-1,1))\times L^2(0,3T/4)}\\
		&\quad\qquad+{1\over T^2}\|(\varphi,\gamma)\|^2_{L^2(T/2,3T/4;L^2(-1,1))\times L^2(T/2,3T/4)}\bigg).
\end{alignedat}
\end{equation}
	To do that, let us introduce a function $\kappa\in C^1([0,T])$ with
$$
	\kappa\equiv1\quad\hbox{in}\quad [0,T/2],
	\qquad\kappa\equiv0\quad\hbox{in}\quad [3T/4,T]\quad \hbox{and}
	\quad |\kappa'|\leq C/T,
$$
	for some $C>0$. Using classical energy estimates for the system satisfied by $(\kappa\varphi,\kappa\gamma)$ (see Proposition~\ref{prop:sol_adj}), we obtain:
\begin{align*}
		\|\kappa\gamma\|^2_{ H^{1}(0,T)}+
		\|\kappa\varphi\|^2_{H^{1,2}(Q)}
		\le &~
		e^{C(1+T)}\bigg(\|(\kappa g_1,\kappa g_2)\|^2_{L^2(Q)\times L^2(0,T)}+\|(\kappa'\varphi,\kappa'\gamma)\|^2_{L^2(Q)\times L^2(0,T)}\bigg),
\end{align*}
	which leads to \eqref{energy_est}.
	
	Since the weights are bounded from above and from below, using \eqref{energy_est} we obtain
	a first estimate in $D_1$:
\begin{equation}\label{carleman_001}\begin{alignedat}{2}
&\int_0^{T/2}\left[|\gamma_t|^2+\widehat\mu\left(|\varphi_x(-1,t)|^2+|\varphi_x(1,t)|^2\right)+\widehat\mu^3\left(|\gamma|^2+|\varphi(1,t)|^2\right)\right]e^{-2s\widehat\zeta}\,dt 
\\ \quad \dis
		&\iint_{D_1} \left[\mu^{-1}(|\vp_t|^2 + |\vp_{xx}|^2)+\mu|\vp_x|^2 
+\mu^3 |\vp|^2\right]e^{-2s\zeta}\,dx\,dt +|\gamma(0)|^2+\|\varphi(\cdot\,,0)\|_{H^1(-1,1)}^2\\ \quad \dis
    		& \leq C \left[\int_0^{3T/4}\left(\int_{-1}^1\!|g_1|^2 e^{-2s \zeta}\,dx
+|g_2|^2e^{-2s\widehat\zeta}\right)\,dt\right.\\
&\left.\qquad\qquad+\int_{T/2}^{3T/4}\left(\int_{-1}^{1} \!\! \lambda^4(s\mu)^3|\vp|^2e^{-2s \zeta}\,dx+\lambda^3(s\widehat\mu)^3|\gamma|^2e^{-2s\widehat\zeta}\right)\,dt \right],
		%+s^{-1}\lambda^{-1}\int_0^T\widehat\xi^{-1}e^{-2s\widehat\alpha}|g|^2\,dt\right).    
     \end{alignedat}
\end{equation}
	where $C$ is a positive constant depending on $s$, $\lambda$ and $T$.

	On the other hand, since $\alpha=\zeta$ and $\xi=\mu$ in $D_2$, thanks to Theorem \ref{cl:carlemanadj} we have:
\[
\begin{alignedat}{2}
		&\iint_{D_2}\!\! \left[(s\mu)^{-1}(|\vp_t|^2 + |\vp_{xx}|^2)+\lambda^2(s\mu)|\vp_x|^2 
+\lambda^4(s\mu)^3 |\vp|^2\right]e^{-2s\zeta}\,dx\,dt \\
&\dis+\int_{T/2}^T\left[|\gamma_t|^2+\widehat\mu\left(|\varphi_x(-1,t)|^2+|\varphi_x(1,t)|^2\right)+\widehat\mu^3\left(|\gamma|^2+|\varphi(1,t)|^2\right)\right]e^{-2s\widehat\zeta}\,dt   \\ \quad \dis
%		=&\int_{T\over2}^{T}\!\!\!\int_{-1}^1\!\! \left[(s\xi)^{-1}(|\vp_t|^2 + |\vp_{xx}|^2)+\lambda^2(s\xi)|\vp_x|^2 
%+\lambda^4(s\xi)^3 |\vp|^2\right]e^{-2s\alpha}+\int_{T\over2}^{T}\!\!\!\left( |\gamma_t|^2+\lambda^3(s\widehat\xi)^3|\gamma|^2\right)e^{-2s\widehat\alpha}\\
%&\dis+\int_{T\over2}^{T}\lambda^3(s\widehat\xi)^3 |\varphi(1,t)|^2e^{-2s\widehat\alpha}+\int_{T\over2}^{T}\lambda(s\widehat\xi)(|\varphi_x(-1,t)|^2
%		+|\varphi_x(1,t)|^2)e^{-2s\widehat\alpha}  \\ \quad \dis
		\leq&\iint_Q\!\! \left[(s\xi)^{-1}(|\vp_t|^2 + |\vp_{xx}|^2)+\lambda^2(s\xi)|\vp_x|^2 
+\lambda^4(s\xi)^3 |\vp|^2\right]e^{-2s\alpha}\,dx\,dt\\
&\dis+\int_{0}^{T}\!\!\!\left[ |\gamma_t|^2+\lambda(s\widehat\xi)\left(|\varphi_x(-1,t)|^2
		+|\varphi_x(1,t)|^2\right)+\lambda^3(s\widehat\xi)^3\left(|\gamma|^2+ |\varphi(1,t)|^2\right)\right]e^{-2s\widehat\alpha}\,dt \\ \quad \dis
		\leq&~ C_0 \left(\jjnt_Q|g_1|^2 e^{-2s \alpha}
+ \int_0^T |g_2|^2e^{-2s\widehat\alpha}+s^3\lambda^4\iil  \xi^3|\vp|^2e^{-2s \alpha}\right).
\end{alignedat}
\]
	
	Finally, from the definition of $\zeta$, $\mu$ and $\widehat\zeta$, we deduce that
\begin{align*}
		&\iint_{D_2}\!\!\! \left[(s\mu)^{-1}(|\vp_t|^2 + |\vp_{xx}|^2)+\lambda^2(s\mu)|\vp_x|^2 
+\lambda^4(s\mu)^3 |\vp|^2\right]e^{-2s\zeta}\,dx\,dt\\
&\dis+\int_{T/2}^T\left[|\gamma_t|^2+\widehat\mu\left(|\varphi_x(-1,t)|^2+|\varphi_x(1,t)|^2\right)+\widehat\mu^3\left(|\gamma|^2+|\varphi(1,t)|^2\right)\right]e^{-2s\widehat\zeta}\,dt   \\ \quad \dis
		 \le &~C(T,s,\lambda)\left(\jjnt_Q|g_1|^2 e^{-2s \zeta}\,dx\,dt
+ \int_0^T |g_2|^2e^{-2s\widehat\zeta}\,dt+\iil  \mu^3|\vp|^2e^{-2s \zeta}\,dx\,dt \right),
\end{align*}
	which, combined with \eqref{carleman_001}, provides \eqref{carleman:3}.	
\end{proof}

	In the sequel, we will use the notation
\begin{equation*}
C_\rho^k([0,T];B):= \{v: \rho v\in C^k([0,T];B)\}~\hbox{and} ~W^{r,p}_\rho(0,T;B):= \{v: \rho v\in W^{k,r}(0,T;B)\}.
\end{equation*}
	Here, it is assumed that~$B$ is a Banach space, $\rho:[0,T]\mapsto \mathbb{R}$ is a positive measurable function, $k\in \mathbb{N}$, $r\in \mathbb{R}_{\geq0}$ and~$p \in [1,+\infty]$.
	Accordingly, we set
\[
	\|v\|_{C^k_\rho([0,T];B)}:= \|\rho v\|_{C^k([0,T];B)}\ \hbox{ and } \  \|v\|_{W^{r,p}_\rho(0,T;B)}:= \|\rho v\|_{W^{r,p}(0,T;B)}.
\]
	In particular, when~$B = \R$, we simply write $C_\rho^k([0,T])$ and~$W^{r,p}_\rho(0,T)$; when $p=2$, we use the notation $H^r(0,T;B):=W^{r,2}(0,T;B)$ and $H^r(0,T):=W^{r,2}(0,T)$.

	We will also need the spaces $H^{1,2}_\rho(Q):=\{v: \rho v\in H^{1,2}(Q)\}$ and $H^{1,2}_{0,\rho}(Q):=\{v: \rho v\in H^{1,2}_0(Q)\}$, endowed with the norm
	$\|v\|_{H^{1,2}_\rho(Q)}:=\|\rho v\|_{H^{1,2}(Q)}$.

	Let us establish the null controllability of~\eqref{eq:conStefantransnulllinear} with a right hand side which decays exponentially as $t\to T^-$. 
	As we will see in the next section, this will be crucial to deduce the local null controllability of \eqref{eq:conStefantransnull_extended}.

	Let us introduce the linear operators
$$
	\cal L_1(z,h) :=\bar q z_t-z_{xx}+\frac{x}{\beta}\bar p_x(1,\cdot) z_x+\frac{x}{\beta}\bar p_x z_x(1\,,\cdot)+{\black 2\over \beta}\bar p_th 
	\quad \hbox{and} \quad
	\cal L_2(z,h) :=h_t+z_x(1,\cdot)
$$
	and the space $E$, given by
$$
\begin{alignedat}{2}
	E:=&\,\big\{(z,h,w)\in L^2_{\rho_0}(Q)\times L^2_{\rho_1}(0,T)\times L^2_{\rho_2}(\omega\times(0,T)): \cal L_1(z,h)-w1_\omega\in L_{\rho_3}^2(Q),~\cal L_2(z,h)\in L_{\rho_3}^2(0,T)\\
	&\qquad\qquad\qquad\,h\in H^1_{\rho_4}(0,T)~ \hbox{and}~ z\in H^{1,2}_{0,\rho_4}(Q) \big\}.
	\end{alignedat}
$$
	It is clear that $E$ is a Hilbert space for the norm $\|\cdot\|_E$, where
$$
\begin{alignedat}{2}
	\|(z,h,w)\|_E:=&~ \bigg(\|(z,h,w1_\omega)\|_{L^2_{\rho_0}(Q)\times L^2_{\rho_1}(0,T)\times L^2_{\rho_2}(Q)}^2+\|\cal L_1(z,h)-w1_\omega\|_{L_{\rho_3}^2(Q)}^2\\
	&+
	\|\cal L_2(z,h)\|_{L_{\rho_3}^2(0,T)}^2+\|h\|_{H^1_{\rho_4}(0,T)}^2+\|\rho_4 z\|_{H^{1,2}(Q)}^2\bigg)^{1/2}.
		\end{alignedat}
$$

	The null controllability of the linearized system is guaranteed by the following result:
\begin{proposition}\label{prop:contlin}
	Assume that %~$e^{s\widehat \zeta}\widehat\mu^{-3/2}f_1\in L^2(Q)$, $e^{s\widehat \zeta}\widehat\mu^{-3/2}f_2\in L^2(0,T)$
	$(f_1,f_2)\in L^2_{\rho_3}(Q)\times L^2_{\rho_3}(0,T)$
	 and that ~$(z_0,h_0)\in H_0^1(-1,1)\times \mathbb{R}$.
	Then, there exists a solution to~\eqref{eq:conStefantransnulllinear} satisfying $(z,h)\in E$.
\end{proposition}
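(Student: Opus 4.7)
The plan is to establish Proposition \ref{prop:contlin} via the Fursikov--Imanuvilov variational scheme, turning the weighted Carleman estimate \eqref{carleman:3} of Proposition \ref{lm:caraltweights} into a coercivity statement for a Lax--Milgram problem on the adjoint side. Denote by $\mathcal{L}_1^{*}$ and $\mathcal{L}_2^{*}$ the differential operators appearing in the PDE and the ODE of the adjoint system \eqref{adj:Stefan}, so that this system reads $\mathcal{L}_1^{*}(\varphi,\gamma)=g_1$ and $\mathcal{L}_2^{*}(\varphi,\gamma)=g_2$, plus boundary and final conditions. Introduce the space $P_0$ of smooth pairs $(\varphi,\gamma)\in C^2(\overline{Q})\times C^1([0,T])$ that satisfy the homogeneous boundary conditions of \eqref{adj:Stefan} at $x=\pm 1$ and the compatibility relation \eqref{compatib_cond_2} at $t=T$, equipped with the symmetric bilinear form
\begin{equation*}
a((\varphi,\gamma),(\hat\varphi,\hat\gamma)) := \iint_Q \mathcal{L}_1^{*}(\varphi,\gamma)\,\mathcal{L}_1^{*}(\hat\varphi,\hat\gamma)\,\rho_3^{-2}\,dx\,dt + \int_0^T \mathcal{L}_2^{*}(\varphi,\gamma)\,\mathcal{L}_2^{*}(\hat\varphi,\hat\gamma)\,\rho_3^{-2}\,dt + \iint_{\omega\times(0,T)}\varphi\hat\varphi\,\rho_2^{-2}\,dx\,dt.
\end{equation*}
Applying \eqref{carleman:3} with $g_1=\mathcal{L}_1^{*}(\varphi,\gamma)$ and $g_2=\mathcal{L}_2^{*}(\varphi,\gamma)$ shows that $a$ is a scalar product on $P_0$ and dominates in particular $\|\rho_0^{-1}\varphi\|_{L^2(Q)}^2$, $\|\rho_1^{-1}\gamma\|_{L^2(0,T)}^2$, $\|\varphi(\cdot,0)\|_{H^1(-1,1)}^2$ and $|\gamma(0)|^2$. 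Let $P$ be the Hilbert completion of $P_0$ under $a$.

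On $P$ consider the linear form
\begin{equation*}
\langle \Phi,(\varphi,\gamma)\rangle := \iint_Q f_1\varphi\,dx\,dt + \int_0^T f_2\gamma\,dt + \bar q(0)\,(z_0,\varphi(\cdot,0))_2 + h_0\gamma(0).
\end{equation*}
Its continuity on $P$ follows from the Cauchy--Schwarz inequality in the weighted $L^2$-spaces: the two integral terms are bounded by $\|(f_1,f_2)\|_{L^2_{\rho_3}(Q)\times L^2_{\rho_3}(0,T)}$ times the weighted $L^2$-norms of $(\varphi,\gamma)$ built into $a$, while the last two terms are controlled by $\|(z_0,h_0)\|_{H^1_0(-1,1)\times\mathbb{R}}$ times the trace quantities at $t=0$ captured by \eqref{carleman:3}. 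By Lax--Milgram there is a unique $(\hat\varphi,\hat\gamma)\in P$ with $a((\hat\varphi,\hat\gamma),\cdot)=\langle \Phi,\cdot\rangle$. Then set
\begin{equation*}
\hat z := \rho_3^{-2}\,\mathcal{L}_1^{*}(\hat\varphi,\hat\gamma),\qquad \hat h := \rho_3^{-2}\,\mathcal{L}_2^{*}(\hat\varphi,\hat\gamma),\qquad w := -\rho_2^{-2}\,\hat\varphi\,1_\omega.
\end{equation*}
A direct integration by parts in the identity $a((\hat\varphi,\hat\gamma),(\varphi,\gamma))=\langle \Phi,(\varphi,\gamma)\rangle$, for every test pair $(\varphi,\gamma)\in P_0$, shows that $(\hat z,\hat h)$ is a transposition solution of \eqref{eq:conStefantransnulllinear} with control $w$ and data $(z_0,h_0)$, and that these three functions already lie in $L^2_{\rho_0}(Q)$, $L^2_{\rho_1}(0,T)$ and $L^2_{\rho_2}(\omega\times(0,T))$ respectively, with $\mathcal{L}_1(\hat z,\hat h)-w1_\omega\in L^2_{\rho_3}(Q)$ and $\mathcal{L}_2(\hat z,\hat h)\in L^2_{\rho_3}(0,T)$ by construction.

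The final step is to upgrade $(\hat z,\hat h)$ to the strong regularity $H^{1,2}_{0,\rho_4}(Q)\times H^1_{\rho_4}(0,T)$ required for membership in $E$. I would apply Proposition \ref{prop:sol_adj_7} to $(\rho_4\hat z,\rho_4\hat h)$, which satisfies the same type of linear system with modified right-hand sides of the form $\rho_4 F + \rho_{4,t}\hat z$ and $\rho_4 G + \rho_{4,t}\hat h$, where $F=f_1+w1_\omega$ and $G=f_2$. The $L^\infty(0,T)$ bounds $\rho_4\rho_3^{-1},\ \rho_4\rho_2^{-1},\ \rho_{4,t}\rho_0^{-1}$ listed in Remark \ref{bounds_weight} ensure that each of these source terms is square-integrable, and the initial data $\rho_4(0)(z_0,h_0)$ sits in $H^1_0(-1,1)\times\mathbb{R}$ because $\rho_4$ is bounded near $t=0$. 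Proposition \ref{prop:sol_adj_7} then delivers the desired strong estimate, and the blow-up of $\rho_4$ as $t\to T^{-}$ forces $\hat z(\cdot,T)=0$ and $\hat h(T)=0$ automatically, so that indeed $(\hat z,\hat h,w)\in E$. The main obstacle is expected to be the continuity of $\Phi$ on $P$ together with the commutator step just described, both of which hinge on the delicate matching between the five weights $\rho_0,\rho_1,\rho_2,\rho_3,\rho_4$; the definitions in \eqref{eq:weightsbeta} and the inclusions of Remark \ref{bounds_weight} have been tailored precisely for this balance.
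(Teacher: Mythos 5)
Your overall architecture --- the space $P_0$, a bilinear form built from $\mathcal{L}_1^*$, $\mathcal{L}_2^*$ and a local term on $\omega$, Lax--Milgram on the completion, the duality identification of $(\hat z,\hat h,w)$, and the final bootstrap through Proposition~\ref{prop:sol_adj_7} applied to $(\rho_4\hat z,\rho_4\hat h)$ via Remark~\ref{bounds_weight} --- is exactly the paper's. However, there is a genuine gap in your choice of weights. You put $\rho_3^{-2}$ on both the $\mathcal{L}_1^*$ and the $\mathcal{L}_2^*$ terms of $a$, whereas the right-hand side of the observability inequality \eqref{carleman:3} carries the weight $e^{-2s\zeta^*}=\rho_0^{-2}$ on $g_1$ and $e^{-2s\widehat\zeta}=\rho_1^{-2}$ on $g_2$. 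For the $\mathcal{L}_2^*$ term your weight is harmless, since $\rho_1^{-2}\le C\rho_3^{-2}$ ($\widehat\mu$ is bounded below). For the $\mathcal{L}_1^*$ term it is fatal: since
$$
\frac{\rho_3^{-2}}{\rho_0^{-2}}=\widehat\mu^{\,3}\,e^{-2s(\widehat\zeta-\zeta^*)}\longrightarrow 0\quad\text{as }t\to T^-
$$
(the exponential, with $\widehat\zeta-\zeta^*\sim c/r(t)$, beats the polynomial factor $\widehat\mu^{\,3}$), the quantity $\iint_Q\rho_0^{-2}|\mathcal{L}_1^*(\varphi,\gamma)|^2\,dx\,dt$ is \emph{not} controlled by $\iint_Q\rho_3^{-2}|\mathcal{L}_1^*(\varphi,\gamma)|^2\,dx\,dt$. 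Consequently \eqref{carleman:3} does not yield the coercivity statement ``(Carleman left-hand side) $\le C\,a((\varphi,\gamma),(\varphi,\gamma))$'', and you cannot establish the continuity of $\Phi$ on the completion $P$ (nor that $a$ dominates $\|\rho_3^{-1}\varphi\|_{L^2}^2$, $\|\varphi(\cdot\,,0)\|_{H^1}^2$ and $|\gamma(0)|^2$, which is exactly what that continuity requires). The Lax--Milgram step therefore does not go through as written.

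The fix is precisely the paper's choice: weight $\rho_0^{-2}$ on the $\mathcal{L}_1^*$ term and $\rho_1^{-2}$ on the $\mathcal{L}_2^*$ term, with correspondingly $\hat z:=\rho_0^{-2}\mathcal{L}_1^*(\hat\varphi,\hat\gamma)$ and $\hat h:=\rho_1^{-2}\mathcal{L}_2^*(\hat\varphi,\hat\gamma)$; this also delivers directly the memberships $\hat z\in L^2_{\rho_0}(Q)$ and $\hat h\in L^2_{\rho_1}(0,T)$ demanded by $E$. Note that even if your Lax--Milgram step were granted, your definition $\hat h=\rho_3^{-2}\mathcal{L}_2^*(\hat\varphi,\hat\gamma)$ would only give $\hat h\in L^2_{\rho_3}(0,T)$, and since $\rho_3/\rho_1=\widehat\mu^{-3/2}\to 0$ as $t\to T^-$ this does not imply $\hat h\in L^2_{\rho_1}(0,T)$. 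The remainder of your argument, including the regularity upgrade through the system satisfied by $(\rho_4\hat z,\rho_4\hat h)$ and the boundedness of the ratios $\rho_4\rho_3^{-1}$, $\rho_4\rho_2^{-1}$ and $\rho_{4,t}\rho_0^{-1}$, is correct once the weights are repaired.
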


\begin{proof}
	Let us consider the following subspace of $H^{1,2}(Q)\times H^1(0,T)$:
\[
P_0=\!\{\!(\varphi,\gamma)\!\in\! H^{1,2}(Q)\!\times\! H^1(0,T): \varphi(\cdot\,,-1)\!=0, ~\vp(1,\cdot)-\gamma-\frac{1}{\beta}\int_{-1}^1\! \!\!\bar p_x(x,\cdot) x \vp(x,\cdot)\,dx=0~\hbox{in}~(0,T)\}\!. 
\]
	Let ${\mathcal A}: P_0\times P_0\mapsto \bb R$ be the bilinear form
\[
\begin{alignedat}{2}
	{\mathcal A}((\widehat\varphi,\widehat\gamma),(\varphi,\gamma)) 
	:=& \iil \rho_2^{-2}\widehat\varphi\varphi  \,dx\,dt
	+ \jjnt_Q \rho_0^{-2}{\cal L}^*_1(\widehat\varphi,\widehat\gamma) \cal L^*_1(\varphi,\gamma) \,dx\,dt+ \int_0^T \rho_1^{-2} {\cal L}_2^*(\widehat\varphi,\widehat\gamma) \cal L^*_2(\varphi,\gamma) \,dt
	\end{alignedat}
\]
	and let ${\mathcal F}:P_0\mapsto\bb R$ be the linear form
\begin{equation*}
{\mathcal F}(\varphi,\gamma) := \ol{q}(0)\int_0^1 z_0(x)\cdot\varphi(x,0)\,dx +\beta h_0\gamma(0)
+ \jjnt_Q f_1\varphi \,dx\,dt + \int_0^T f_2\gamma\,dt,
\end{equation*}
where
$$
	\cal L^*_1(\phi,\gamma) :=-\bar q\vp_t- \vp_{xx}-\frac{x}{\beta}\bar p_x(1,\cdot)\vp_x+\frac{1}{\beta}\bar p_x(1,\cdot)\vp \quad \hbox{and}\quad 
	\cal L^*_2(\phi,\gamma) :=\gamma_t-\int_{-1}^1{\black 2\over \beta}\ol p_t(x,\cdot)\vp(x,\cdot)\,dx.
$$

% An easy consequence is that
%\begin{equation}\label{comp:alphabeta}
%\beta^*\leq\alpha^* \mbox{ in } [0,T],
%\end{equation}
%where 
%$$
%	\alpha^*(t):= \min\limits_{x\in [-1,1]}\alpha(x,t).
%$$

	Note that the observability inequality \eqref{carleman:3} holds for every $(\phi,\kappa)\in P_0$.
	Consequently, ${\mathcal A}(\cdot,\cdot)$ is a scalar product in $P_0$ and there exists $C>0$ such that, for all $(\varphi,\gamma)\in P_0$, the following estimate holds:
	\[
|{\mathcal F}(\varphi,\gamma)|\leq 
C\left(\|z_0\|_{L^2(-1,1)}+|h_0|+ 
\|f_1\|_{L^2_{\rho_3}(Q)}+ \|f_2\|_{L^2_{\rho_3}(0,T)}
\right) \sqrt{{\mathcal A}((\varphi,\gamma),(\varphi,\gamma))}.
	\]

	In the sequel, we will denote by~$P$ the completion of~$P_0$ for the scalar product ${\mathcal A}$. We will still denote by~${\mathcal A}$ and~${\mathcal F}$ the corresponding continuous extensions. Note that $P$ can be identified with the Hilbert space
\[
\begin{alignedat}{2}
	&\{(\varphi,\gamma)\in L^2_{loc}(Q_T)\times L^2_{loc}(0,T) : {\mathcal A}((\varphi,\gamma),(\varphi,\gamma))< +\infty, ~\,\varphi|_{\{-1\}\times(0,T)}=0,\\
	&\qquad\qquad~\vp(1,\cdot)-\gamma-\frac{1}{\beta}\int_{-1}^1\! \!\!\bar p_x(x,\cdot) x \vp(x,\cdot)\,dx=0~\hbox{in}~(0,T)
	~\hbox{and $(\varphi,\gamma)$ satisfies  \eqref{carleman:3}}\}.
	\end{alignedat}
\]
	
	From Lax-Milgram Theorem, there exists a unique $(\widehat\varphi,\widehat\gamma)\in P$ such that
\begin{equation}\label{def:zgammav}
{\mathcal A}((\widehat\varphi,\widehat\gamma),(\varphi,\gamma))={\mathcal F}(\varphi,\gamma) \ \ \forall (\varphi,\gamma) \in P.
\end{equation}
	Let us introduce~$(\widehat z,\widehat h,\widehat w)$, with
\[
(\widehat z,\widehat h):= (\rho_0^{-2} {\cal L}^*_1(\widehat\varphi,\widehat\gamma),\rho_1^{-2}  {\cal L}_2^*(\widehat\varphi,\widehat\gamma)),\ \ 
 \widehat w= -\rho_2^{-2} \widehat\varphi1_{\omega}.
\]

	From~\eqref{def:zgammav}, we get:
\[
 \jjnt_Q \rho_0^2|\widehat z|^2 \,dx\,dt + \int_0^T \rho_1^2|\widehat h|^2 \,dt+\iil\rho_2^2|\widehat w|^2\,dx\,dt ={\mathcal A}((\widehat\varphi,\widehat\gamma),(\widehat\varphi,\widehat\gamma)) ={\mathcal F}(\widehat\varphi,\widehat\gamma).
\]
	Therefore, taking into account the continuity of ${\mathcal F}$, we have:
\begin{equation}\label{eq:weighted_hat}
\begin{alignedat}{2}
\!\!\jjnt_Q\!\!\rho_0^2|\widehat z|^2 \,dx\,dt + \int_0^T\!\!\!\rho_1^2|\widehat h|^2 \,dt+\iil\!\rho_2^2|\widehat w|^2\,dx\,dt \leq C\left(\|z_0\|_{L^2(-1,1)}^2+|h_0|^2+ 
\|f_1\|^2_{L^2_{\rho_3}(Q)}+ \|f_2\|^2_{L^2_{\rho_3}(0,T)}
\right).
\end{alignedat}
\end{equation} 

Note that, in particular, $(\widehat z,\widehat h,\widehat w)\in L^2(Q)\times L^2(0,T)\times L^2(\omega\times (0,T))$. Then, 
from \eqref{def:zgammav}, we see that $(\widehat z,\widehat h)$ is the unique solution by transposition of~\eqref{eq:conStefantransnulllinear} with
$w=\widehat w$, see~Proposition \ref{ex_un_mixed_sol}. Thanks to the fact that the $z_0$, $\widehat w$, $f_1$ and $f_2$ are sufficient regular, 
Proposition \ref{prop:sol_adj_7} guarantees that $(\widehat z,\widehat h)$ is indeed the strong solution of \eqref{eq:conStefantransnulllinear} in $H^{1,2}_{0}(Q)\times H^1(0,T)$.	
	
	Let us finally prove that $(\widehat z,\widehat h,\widehat w)\in E$.
	
	Using ~\eqref{eq:conStefantransnulllinear}  and \eqref{eq:weighted_hat}, we can easily check that $\widehat z\in L^2_{\rho_0}(Q)$,
	$\widehat h\in L^2_{\rho_1}(0,T)$,
	$\widehat w\in L^2_{\rho_2}(\omega\times(0,T))$,
	$\cal L_1(\widehat z,\widehat h)-\widehat w1_\omega\in L_{\rho_3}^2(Q)$ and $\cal L_2(\widehat z,\widehat h)\in L_{\rho_3}^2(0,T)$.
	
	It remains to check that $\widehat h\in H^1_{\rho_4}(0,T)$ and $\widehat z\in H^{1,2}_{0,\rho_4}(Q)$. With that purpose, we define $\widetilde z=\rho_4\widehat z$ and $\widetilde h=\rho_4 \widehat h $.
	Then, $(\widetilde z, \widetilde h)$ is the solution to the system:
\begin{equation}\label{eq:conStefantransnulllinear22}
\left\{
	\begin{array}{lcl}\dis
		%\bar q z_t- z_{xx}-\frac{1}{2}\bar q' x z_x+ \bar p_t h+\frac{1}{\beta}\bar p_x x z_x(1\,,\cdot)
		\cal L_1(\widetilde z,\widetilde h)=(\rho_4\rho_3^{-1})\rho_3f_1+(\rho_4\rho_2^{-1})\rho_2w1_\omega+(\rho_{4,t}\rho_0^{-1})\rho_0\widehat z	& \mbox{ in }&Q,\\
 		\widetilde z(-1,\cdot)= 0																& \mbox{ in }&(0,T),\\
		\widetilde z(1,\cdot)=0																	& \mbox{ in }&(0,T),\\
		\widetilde z(\cdot\,,0)= \rho_4(0)z_0																& \mbox{ in }&(-1,1),\\
		\cal L_2(\widetilde z,\widetilde h)=\rho_4f_2+\rho_{4,t}h													& \mbox{ in }& (0,T),\\
		\widetilde h(0)= \rho_4(0)h_0.&&
	\end{array}
	\right.
\end{equation}

	Consequently, thanks to Remark \ref{bounds_weight} and Proposition~\ref{prop:sol_adj_7}, we obtain the desired estimates and ~$(z,h,w)\in E$, as desired.
\end{proof}

%%%%%%%%%%%%%%%%%%%%%%%%%%%%%%%%%%%%%%%%%%
%%%%%%%%%%%%%%%%%%%%%%%%%%%%%%%%%%%%%%%%%%
%%%%%%%%%%%%%%%%%%%%%%%%%%%%%%%%%%%%%%%%%%
%% REFORMULATION OF THE free-boundary PROBLEM   %%%%%%%%%
%%%%%%%%%%%%%%%%%%%%%%%%%%%%%%%%%%%%%%%%%%
%%%%%%%%%%%%%%%%%%%%%%%%%%%%%%%%%%%%%%%%%%
%%%%%%%%%%%%%%%%%%%%%%%%%%%%%%%%%%%%%%%%%%

\subsection{Controllability of the nonlinear system}\label{sec:controlnonlinear}

	We now prove the controllability of~\eqref{eq:conStefantransnull_extended} by applying a local inversion theorem.
	More precisely, we are going to use the following result, whose proof can be found for instance in~\cite[Chapter~2, p.~107]{alekseevoptimal}:

\begin{theorem}[Liusternik-Graves' Theorem]\label{tm:invfunc}
	Let $\cal B_1$ and $\cal B_2$ be two Banach spaces and let $\Lambda: \cal B_1 \mapsto \cal B_2$ be of class $C^1$ in a neighborhood of~$b_{1,0} \in\cal B_1$.
	Assume that $\Lambda(b_{1,0}) = b_{2,0}$ and~$\Lambda'(b_{1,0}):\cal B_1\mapsto\cal B_2$ is surjective.
	Then, there exists $\delta > 0$ such that, for every $b_2 \in \cal B_2$ satisfying $\| b_2 - b_{2,0} \|_{\cal B_2} \leq \delta$, there exists at least one solution~$b_1 \in \cal B_1$ to the equation $\Lambda(b_1) = b_2$.
\end{theorem}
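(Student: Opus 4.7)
The plan is to prove Liusternik--Graves by reducing the nonlinear equation $\Lambda(b_1)=b_2$ to a fixed point problem and using a Banach-type iteration argument. The two main ingredients are a quantitative form of surjectivity of $L:=\Lambda'(b_{1,0})$ and the fact that the nonlinear remainder is Lipschitz with small constant on a small ball.

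First, I would invoke the Open Mapping Theorem. Since $L:\cal B_1\to\cal B_2$ is continuous, linear and surjective between Banach spaces, $L$ is an open map. Hence there exists $K>0$ such that for every $y\in\cal B_2$ one can select $x\in\cal B_1$ with $Lx=y$ and $\|x\|_{\cal B_1}\le K\|y\|_{\cal B_2}$. This allows me to fix a (possibly nonlinear) right inverse $R:\cal B_2\to\cal B_1$ of $L$ satisfying $L\circ R=\mathrm{Id}_{\cal B_2}$ and $\|R(y)\|_{\cal B_1}\le K\|y\|_{\cal B_2}$ for all $y\in\cal B_2$; note that $R$ is in general not linear, but boundedness is all we need.

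Next, I would write the unknown as $b_1=b_{1,0}+h$ and decompose
\[
\Lambda(b_{1,0}+h)-b_{2,0}=Lh+N(h),\qquad N(h):=\Lambda(b_{1,0}+h)-\Lambda(b_{1,0})-Lh.
\]
The equation $\Lambda(b_1)=b_2$ becomes $Lh=(b_2-b_{2,0})-N(h)$, so I look for a fixed point of
\[
\Phi(h):=R\bigl((b_2-b_{2,0})-N(h)\bigr).
\]
Any fixed point $h$ satisfies $Lh=(b_2-b_{2,0})-N(h)$, which gives $\Lambda(b_{1,0}+h)=b_2$. The $C^1$ hypothesis on $\Lambda$ near $b_{1,0}$ ensures that $\Lambda'$ is continuous there, so given any $\varepsilon>0$ (I will take $\varepsilon=\tfrac{1}{2K}$) there exists $r>0$ such that $\|\Lambda'(b_{1,0}+h)-L\|\le \varepsilon$ for $\|h\|\le r$; the Mean Value Inequality in Banach spaces then yields
\[
\|N(h)-N(h')\|_{\cal B_2}\le \varepsilon\,\|h-h'\|_{\cal B_1}\quad\text{whenever }\|h\|,\|h'\|\le r,
\]
together with $N(0)=0$, hence $\|N(h)\|\le \varepsilon\|h\|$.

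Finally, I would choose $\delta>0$ so that $K\delta\le r/2$ and consider the closed ball $\overline{B}(0,r)\subset\cal B_1$. For any $b_2$ with $\|b_2-b_{2,0}\|\le\delta$, the estimate
\[
\|\Phi(h)\|_{\cal B_1}\le K\|b_2-b_{2,0}\|_{\cal B_2}+K\varepsilon\|h\|_{\cal B_1}\le \tfrac{r}{2}+\tfrac{r}{2}=r
\]
shows $\Phi$ maps $\overline{B}(0,r)$ into itself, and
\[
\|\Phi(h)-\Phi(h')\|_{\cal B_1}\le K\|N(h)-N(h')\|_{\cal B_2}\le \tfrac{1}{2}\|h-h'\|_{\cal B_1}
\]
shows $\Phi$ is a contraction there. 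The Banach Fixed Point Theorem delivers a unique $h\in \overline{B}(0,r)$ with $\Phi(h)=h$, hence a solution $b_1=b_{1,0}+h$ to $\Lambda(b_1)=b_2$. The main subtlety is the first step: one must be comfortable using a nonlinear bounded right inverse, since a continuous linear right inverse of $L$ need not exist in a general Banach space (its existence is equivalent to the kernel of $L$ being complemented). Once that is accepted, the rest of the argument is standard Newton/contraction bookkeeping.
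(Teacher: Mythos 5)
The paper does not actually prove this statement; it is quoted as a known result with a citation to Alekseev--Tikhomirov--Fomin, so the only thing to assess is the internal correctness of your argument. There is a genuine gap in the contraction step. From the Open Mapping Theorem you obtain a right inverse $R$ of $L$ satisfying only the \emph{norm} bound $\|R(y)\|_{\mathcal{B}_1}\le K\|y\|_{\mathcal{B}_2}$; since $R$ is nonlinear (and, as constructed by pointwise selection, not even continuous in general), the inequality
\[
\|\Phi(h)-\Phi(h')\|_{\mathcal{B}_1}=\|R\bigl((b_2-b_{2,0})-N(h)\bigr)-R\bigl((b_2-b_{2,0})-N(h')\bigr)\|_{\mathcal{B}_1}\le K\|N(h)-N(h')\|_{\mathcal{B}_2}
\]
does not follow: a norm bound on a nonlinear map gives no Lipschitz estimate on differences. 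So $\Phi$ is not known to be a contraction (nor even continuous), and the Banach Fixed Point Theorem cannot be invoked. You correctly flagged that a \emph{linear} bounded right inverse need not exist (the kernel of $L$ need not be complemented), but the real obstruction is precisely that the nonlinear substitute you use lacks the Lipschitz property your fixed-point scheme needs.

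The standard repair — Graves' original iteration — keeps your two ingredients but applies $R$ only to successive residuals rather than composing it into a self-map. Set $b^{(0)}=b_{1,0}$, $\rho_n:=b_2-\Lambda(b^{(n)})$, choose $u_n$ with $Lu_n=\rho_n$ and $\|u_n\|_{\mathcal{B}_1}\le K\|\rho_n\|_{\mathcal{B}_2}$, and put $b^{(n+1)}=b^{(n)}+u_n$. Then $\rho_{n+1}=-\bigl[\Lambda(b^{(n)}+u_n)-\Lambda(b^{(n)})-Lu_n\bigr]$, and your Mean Value Inequality estimate gives $\|\rho_{n+1}\|\le\varepsilon\|u_n\|\le \varepsilon K\|\rho_n\|\le\tfrac12\|\rho_n\|$ as long as the iterates stay in the ball of radius $r$; since $\sum_n\|u_n\|\le K\sum_n\|\rho_n\|\le 2K\|\rho_0\|\le 2K\delta\le r$, they do, the sequence $(b^{(n)})$ is Cauchy, and its limit solves $\Lambda(b_1)=b_2$ by continuity of $\Lambda$. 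This uses only the norm bound on $R$ at each step and delivers exactly the existence statement of the theorem (no uniqueness is claimed, nor should it be, since $L$ may have a large kernel). With that modification your argument is complete and is the classical proof of the result the paper imports.
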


	We shall apply this result with $\cal B_1=E$, $\cal B_2 = F_1\times F_2$ and 
\begin{equation}\label{def:calG}
	 \Lambda(z,h,w) = \left(\cal L_1 (z,h)-w1_\omega+{\black 2\over \beta}hz_t+\frac{x}{\beta}z_x(1,\cdot)z_x,\, \cal L_2 (z,h),\,z(\cdot\,,0),\,h(0)\right)
\end{equation}
	for every $(z,h,w) \in E$.
	Here, we have introduced the Hilbert spaces $F_1:=L^2_{\rho_3}(Q)\times L^2_{\rho_3}(0,T)$ for the right hand sides and~$F_2:=H_0^1(-1,1)\times \mathbb{R}$ for the initial conditions.

	Since $\Lambda$ contains linear and bilinear terms and thanks to the definition of~$E$, it is not difficult to check that $\Lambda$ is continuous.
	Indeed, we only have to prove that the bilinear form
	$$
	 ((z_1,h_1,w_2),(z_2,h_2,w_2))\to {\black 2\over \beta}h_1z_{2,t}+\frac{x}{\beta}z_{1,x}(1,\cdot)z_{2,x}
	$$
		is bounded from $E\times E$ to $L^2_{\rho_3}(Q)$. This is true because $h_1\in H^1_{\rho_4}(0,T)$ and $z_1,z_2\in H^{1,2}_{0,\rho_4}(Q)$ and, in particular, we have 
		$\rho_4 h_1\in H^1(0,T)$, $\rho_4 z_{2,t}\in L^2(Q)$, $z_{1,x}(1,\cdot)\in L^2(0,T)$ and $\rho_4 z_2\in C^0([0,T];H_0^1(-1,1))$.

	Therefore, $\Lambda\in C^1(\cal B_1;\cal B_2)$.
	
	On the other hand, note that $ \Lambda'(0,0,0): \cal B_1 \mapsto \cal B_2$ is given by
\[
	\Lambda'(0,0,0)(z,h,v) = (\cal L_1 (z,h,w),\cal L_2 (z,h,w), z(\cdot\,,0),h(0)) \quad \forall (z,h,v)\in \cal B_1 .
\]
	In view of the null controllability result for \eqref{eq:conStefantransnulllinear} given in Proposition~\ref{prop:contlin},  $\Lambda'(0,0,0)$ is surjective.

	Consequently, we can apply Theorem~\ref{tm:invfunc} with these data and the proof of Proposition \ref{onephase:fixeddomaincontrol_exdended} is achieved.
 
\paragraph{Acknowledgements.}
	EFC and DAS were partially supported by  Grant~PID$2020$--$114976$GB--I$00$, funded by MCIN/AEI/$10.13039/501100011033$.
	DAS was partially supported by  Grant IJC$2018$--$037863$-I funded by MCIN/AEI/$10.13039/501100011033$.

%%%%%%%%%%%%%%%%%%%

\begin{appendices}

\section{Proof of Proposition \ref{prop:sol_adj}}\label{sec:app_A}

	Recall that $(\cdot\,,\cdot)_2$ and~$\|\cdot\|_2$ stand for the usual scalar product and norm in~$L^2(-1,1)$.
	On the other hand, we will denote by~$\|\cdot\|_\infty$ the usual norm in~${L^\infty(Q)}$. 

	The proof of existence relies on {\it Leray-Schauder's Fixed-Point Principle} (see for instance~\cite{Zeidler}). For convenience, let us recall this important result: 
	
\begin{theorem}\label{leray-Schauder}
	Let $\mathcal{B}$ be a Banach space and let $\Lambda:\mathcal{B}\times[0,1]\mapsto \mathcal{B}$ be a continuous and compact mapping such that
\begin{itemize}
	\item $\Lambda(x,0)=0$ for all $x\in \mathcal{B}$.
	\item There exists $M>0$ such that, for any pair $(x,\sigma)\in \mathcal{B}\times [0,1]$ satisfying $x=\Lambda (x,\sigma)$, one has
$$
	\|x\|_{\mathcal{B}} \leq M.
$$
	Then, there exists a least one fixed-point of the mapping $\Lambda_1: \mathcal{B}\mapsto \mathcal{B}$, given by
$$	
	\Lambda_1(x)=\Lambda(x,1) \quad \forall x\in \mathcal{B}.
$$
\end{itemize}	
\end{theorem}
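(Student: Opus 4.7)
The plan is to derive this fixed-point principle from the machinery of Leray--Schauder topological degree for compact perturbations of the identity in a Banach space. Recall that for a compact map $K\colon\overline U\to\mathcal{B}$ defined on the closure of a bounded open set $U\subset\mathcal{B}$, with $I-K$ nowhere vanishing on $\partial U$, the Leray--Schauder degree $\deg(I-K,U,0)\in\mathbb{Z}$ is well-defined and enjoys three pillar properties: normalization ($\deg(I,U,0)=1$ whenever $0\in U$), additivity with respect to disjoint decompositions of $U$, and invariance under admissible compact homotopies. Solutions to $x=K(x)$ in $U$ exist whenever this degree is nonzero.

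First, I would fix any $R>M$ and take $U:=\{x\in\mathcal{B}:\|x\|<R\}$. The continuous compact homotopy
\[
H\colon\overline{U}\times[0,1]\to\mathcal{B},\qquad H(x,\sigma):=x-\Lambda(x,\sigma),
\]
is admissible in the Leray--Schauder sense because it never vanishes on $\partial U$: if there existed $(x,\sigma)\in\partial U\times[0,1]$ with $H(x,\sigma)=0$, then $x=\Lambda(x,\sigma)$, whence the a priori bound gives $\|x\|\le M<R$, contradicting $x\in\partial U$. Compactness of $H$ in the variable $x$, uniform in $\sigma$, follows from compactness and continuity of $\Lambda$ together with compactness of $[0,1]$.

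Second, I would apply homotopy invariance between $\sigma=0$ and $\sigma=1$:
\[
\deg\bigl(I-\Lambda(\cdot,1),U,0\bigr)=\deg\bigl(I-\Lambda(\cdot,0),U,0\bigr)=\deg(I,U,0)=1,
\]
where the middle equality uses the assumption $\Lambda(\cdot,0)\equiv 0$ and the last one is normalization (valid since $0\in U$). A nonzero degree forces the existence of at least one $x_*\in U$ with $x_*=\Lambda(x_*,1)=\Lambda_1(x_*)$, which is the desired fixed point.

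The main obstacle is of course the construction of Leray--Schauder degree itself: one must approximate the compact operator $\Lambda(\cdot,\sigma)$ by finite-rank maps, apply Brouwer degree to the resulting finite-dimensional reductions, and check that the limit is independent of the approximation scheme and of the choice of $R>M$. All of this, together with the three structural properties invoked above, is carried out in detail in the reference of the theorem (Alekseev--Tikhomirov--Fomin, alternatively Zeidler), so in practice I would invoke that body of theory as a black box and only verify the two items above, namely (i) the admissibility of $H$ on $\partial U\times[0,1]$ via the a priori bound, and (ii) the normalization value at $\sigma=0$ via the hypothesis $\Lambda(\cdot,0)\equiv 0$.
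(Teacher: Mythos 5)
Your argument is correct: the a priori bound makes the homotopy $H(x,\sigma)=x-\Lambda(x,\sigma)$ admissible on the boundary of the ball of radius $R>M$, homotopy invariance plus the hypothesis $\Lambda(\cdot,0)\equiv 0$ and normalization give $\deg\bigl(I-\Lambda(\cdot,1),U,0\bigr)=1$, and a nonzero degree yields the fixed point. Note, however, that the paper does not prove this statement at all: it is recalled as a classical result (Leray--Schauder's fixed-point principle) with a pointer to Zeidler's book, and is then simply applied in Appendix~A. So there is nothing in the paper to compare against step by step; your degree-theoretic route is the standard textbook proof of exactly this principle. The one thing you rightly flag as the heavy lifting --- the construction of the Leray--Schauder degree by finite-rank approximation and the verification of normalization, additivity, and homotopy invariance --- is legitimately outsourced to the reference, and the two points you verify yourself (admissibility of $H$ on $\partial U\times[0,1]$ via the a priori bound, and the value of the degree at $\sigma=0$) are precisely the points that depend on the hypotheses of the theorem. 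An alternative, slightly more elementary route that avoids degree theory altogether is to deduce the statement from Schauder's fixed-point theorem applied to a radial truncation of $\Lambda_1$ (the Schaefer-type argument), but that buys nothing here since the result is being used as a black box anyway.
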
 

	%In order to prove the existence of a strong solution, % we are going to use a fixed point argument. 
%For that, it is necessary to assume the compatibility condition:
%\begin{equation}\label{eq:compphigamma}
%\psi^0(1)= 2\gamma^0+\int_0^1N(x,0)\psi^0(x)\,dx.    
%\end{equation}
%
%In order to do that, 
Let us consider the mapping~$\Lambda: \mathcal{B} \times [0,1] \mapsto \mathcal{B}$, given by 
$\Lambda((\widehat\varphi,\widehat\gamma),\sigma)= (\varphi,\gamma)$, where 
	$$
\mathcal{B}= \{ (\widehat\varphi,\widehat\gamma)\in H^{3/4}(0,T;L^2(-1,1))\times H^{3/4}(0,T): \widehat\varphi(\cdot\,,T)=\varphi_T , \ \ \widehat\gamma(T)=\gamma_T \}
	$$
 and $(\varphi,\gamma)$ is the unique solution to
\begin{equation}\label{adj:Stefanplainfixed}
\left\{
	\begin{array}{lll}
	-\bar q(t) \varphi_t-\varphi_{xx}-a\varphi_x-b\varphi=\sigma f& \mbox{ in }&Q,\\
	\noalign{\smallskip}\displaystyle
	\varphi(-1,\cdot)=0& \mbox{ in }&(0,T),\\
	\noalign{\smallskip}\displaystyle
	\varphi(1,t)= \sigma\left(\widehat\gamma(t)+(N(\cdot\,,t),\widehat\varphi(\cdot\,,t))_2\right)& \mbox{ in }&(0,T),\\
	\noalign{\smallskip}\displaystyle
	\varphi(T)=\sigma\varphi_T& \mbox{ in }&(-1,1),\\
	\noalign{\smallskip}\displaystyle
	\gamma'(t)=(R(\cdot\,,t),\varphi(\cdot\,,t))_2+\sigma g(t)& \mbox{ in }&(0,T),\\
	\noalign{\smallskip}\displaystyle
	\gamma(T)=\sigma\gamma_T.
\end{array}
\right.
\end{equation}
% It is trivial to prove that $\cal X_T$ is a close space. 
%Moreover, $\Lambda(\cal X_T)\subset \cal X_T$, which can be proved using standard regularity results for the heat equation  as:
%\[\varphi^0(1)-2\hat\gamma(0)-\int_0^1N(x,0)\hat\varphi(x,0)\,dx=
%\varphi^0(1)-2\gamma^0-\int_0^1N(x,0)\varphi^0(x)\,dx=0.
%\]
\begin{rmq}
	Since $H^{s}(0,T)\hookrightarrow C^{0,s-1/2}([0,T])$ for any $s\in(1/2,1)$, we see that the initial conditions in the definition of $\mathcal{B}$ make sense.
\Fin
\end{rmq}

\begin{rmq}\label{rmq_estimate_fixedpoint}
	The unique solution $(\varphi,\gamma)$ to~\eqref{adj:Stefanplainfixed} belongs to~$H^{1,2}(Q)\times H^1(0,T)$ and depends continuously in this space with respect to $\widehat\varphi$ and $\widehat \gamma$.
	Indeed, 
%it is enough to verify that the non-homogeneous boundary condition belongs to~$H^{3/4}(0,T)$ and satisfies a compatibility condition for $(\widehat \varphi,\widehat \gamma) \in \mathcal{B}$:
	note that $\widehat \gamma\in H^{3/4}(0,T)$ by definition and $t\mapsto (N(\cdot\,,t),\widehat \varphi(\cdot\,,t))_2$ belongs to $H^{3/4}(0,T)$, since $H^{3/4}(0,T)$ is a Banach algebra (see~for instance~\cite{di2012hitchhiker}, though the results date back to \cite{strichartz1967multipliers}).
	Thanks to the compatibility condition \eqref{compatib_cond} and the equation satisfied by $\varphi$, we have
	$$
\sigma\varphi_T(1)=\varphi(1,T)=\sigma\left[\gamma_T+(N(\cdot\,,T),\varphi_T)_2\right].
	$$
	This implies that $(\varphi,\gamma)\in H^{1,2}(Q)\times H^1(0,T)$ and, moreover, there exists a constant $C>0$ such that
	\begin{equation}\label{ineq_fixedpoint}
\begin{alignedat}{2}
	\|(\varphi,\gamma)\|_{ H^{1,2}(Q)\times H^1(0,T)}^2
\leq \sigma^2 e^{C(1+T)}&\left(
\|( f,g)\|_{L^2(Q)\times L^2(0,T)}^2+\|(\varphi_T,\gamma_T)\|_{H^1(-1,1)\times \mathbb{R}}^2\right.\\
&\left.+\|(\widehat \varphi,\widehat \gamma)\|^2_{H^{3/4}(0,T;L^2(-1,1))\times H^{3/4}(0,T)}\right),
\end{alignedat}
	\end{equation}
whence the continuous dependence is ensured.
\Fin
\end{rmq}

	As in \cite{brandao_cara}, we are going to prove that $\Lambda$ fulfills the assumptions in Theorem~\ref{leray-Schauder}:

%\noindent
$\bullet$ $\Lambda: \mathcal{B} \times [0,1] \mapsto  \mathcal{B}$ is well-defined and continuous.
	Indeed, this follows from Remark \ref{rmq_estimate_fixedpoint} and the fact that $H^{1,2}(Q)\times H^1(0,T)\hookrightarrow \mathcal{B}$;
%\item  $\Lambda: \mathcal{B} \times [0,1] \mapsto  \mathcal{B}$ is continuous. {\black\bf(few words to justify that)}
	
%\noindent
$\bullet$ $\Lambda: \mathcal{B} \times [0,1] \mapsto  \mathcal{B}$ is compact as a consequence of parabolic regularity. 	Indeed, whenever $((\widehat\varphi,\widehat\gamma),\sigma)$ belongs to a bounded set in the space $ \mathcal{B} \times [0,1]$, we see from Remark~\ref{rmq_estimate_fixedpoint} that the associated $(\varphi,\gamma)$ belongs to a bounded set in $H^{1,2}(Q)\times H^1(0,T)$.
	But this space is compactly embedded in $\mathcal{B}$:
	note that $H^{s_2}(0,T)$ is compactly embedded in $H^{s_1}(0,T)$ for any $s_1<s_2$, $H^{1,2}(Q)$ is continuously embedded in $H^{r}(0,T;H^{2(1-r)}(-1,1))$ for any $r\in[0,1]$ and, also, $H^{r}(0,T;H^{2(1-r)}(-1,1))$  is compactly embedded in $H^{3/4}(0,T;L^2(-1,1))$ for any $r\in(3/4,1)$.
	We deduce that $\Lambda$ is compact.

%\noindent
$\bullet$ Obviously, $\Lambda((\widehat\varphi,\widehat\gamma),0)= (0,0)$ for all $(\widehat\varphi,\widehat\gamma)\in \mathcal{B}$.

%\noindent
$\bullet$ There exists $C>0$ (depending on $R$, $N$ and $\bar q$, but independent of $\sigma$) such that, for any~$((\varphi,\gamma),\sigma)\in \mathcal{B}\times [0,1]$ satisfying $(\varphi,\gamma)=\Lambda ((\varphi,\gamma),\sigma)$, one has
	\begin{equation}\label{ineq_fixedpointhypo}
\begin{alignedat}{2}
\|(\varphi,\gamma)\|_{ H^{1,2}(Q)\times H^1(0,T)}^2
\leq e^{C(1+T)}&\left(
\|( f,g)\|_{L^2(Q)\times L^2(0,T)}^2+\|(\varphi_T,\gamma_T)\|_{H^1(-1,1)\times \mathbb{R}}^2\right).
\end{alignedat}
	\end{equation}	
	Let us prove this.
	Let $(\varphi,\gamma)\in H^{1,2}(Q)\times H^1(0,T)$ be a solution to
\begin{equation}\label{adj:Stefanplainfixed2}
\left\{
	\begin{array}{lll}
	-\bar q(t)\varphi_t-\varphi_{xx}-a\varphi_x-b\varphi=\sigma f& \mbox{ in }&Q,\\
	\noalign{\smallskip}\displaystyle
	\varphi(-1,\cdot)=0& \mbox{ in }&(0,T),\\
	\noalign{\smallskip}\displaystyle
	\varphi(1,t)= \sigma\left[\gamma(t)+(N(\cdot\,,t),\varphi(\cdot\,,t))_2\right]& \mbox{ in }&(0,T),\\
	\noalign{\smallskip}\displaystyle
	\varphi(\cdot\,,T)=\sigma\varphi_T& \mbox{ in }&(-1,1),\\
	\noalign{\smallskip}\displaystyle
	\gamma'(t)=(R(\cdot\,,t),\varphi(\cdot\,,t))_2+\sigma g(t)& \mbox{ in }&(0,T),\\
	\noalign{\smallskip}\displaystyle
	\gamma(T)=\sigma\gamma_T.
\end{array}
\right.
\end{equation}
	Let us multiply $\eqref{adj:Stefanplainfixed2}_1$ successively  by $\varphi$ and $\varphi_t$ and let us integrate in $[\tau,T]\times[-1,1]$, with $\tau\in[0,T)$.
	Then, from Young and Cauchy-Schwarz inequalities, we obtain: 
\begin{equation}\label{eq:adjphi}
\begin{alignedat}{2}
  	{\bar q(\tau)}\|\varphi(\cdot\,,\tau)\|^2_{2}+\int_\tau^T\|\varphi_x(\cdot\,,t)\|^2_{2}\,dt
	\leq&~\sigma^2{\bar q(T)}\|\varphi_T\|^2_{2}+\sigma^2 \|f\|_2+2\int_\tau^T\varphi(1,t)\varphi_x(1,t)\,dt\\
	&+\int_\tau^T \left(2+\|a(\cdot\,,t)\|^2_\infty +\|b(\cdot\,,t)\|^2_\infty\right)\|\varphi(\cdot\,,t)\|^2_{2}\,dt
\end{alignedat}    
\end{equation}
	 and
\begin{equation}\label{eq:adjphitt}
\begin{split}
	\|\varphi_x(\cdot\,,\tau)\|^2_{2}+\int_\tau^T \bar q(t)\|\varphi_t(\cdot\,,t)\|^2_{2}\,dt
	\leq&~ 2\int_\tau^T\varphi_x(1,t)\varphi_{t}(1,t)\,dt+\sigma^2\|\varphi_{T,x}\|^2_{2}+{2\sigma^2\over q_*}\|f\|^2_2
	\\& + {4\over q_*}\int_\tau^T\left(\|a(\cdot\,,t)\|^2_\infty+\|b(\cdot\,,t)\|^2_\infty\right)\|\varphi(\cdot\,,t)\|^2_{H^1(-1,1)}\,dt.
	\end{split}
\end{equation}
	Consequently, combining \eqref{adj:Stefanplainfixed2}$_1$, \eqref{eq:adjphi} and~\eqref{eq:adjphitt} and using that the $H^1$~norm can be interpolated by the $L^2$ and~$H^2$ norms and the fact that $\sigma\in[0,1]$, we deduce that
	\begin{equation}\label{est:regaux1}
\begin{alignedat}{2}
	&\int_\tau^T\left(\|\varphi_t(\cdot\,,t)\|^2_2+\|\varphi(\cdot\,,t)\|^2_{H^2(-1,1)}\right)\,dt + \|\varphi(\cdot\,,\tau)\|_{H^1(-1,1)}^2\\
	 &\leq C\bigg(\|\varphi_T\|_{H^1(-1,1)}^2 + \|f\|^2_2+\int_\tau^T\left(1+\|a(\cdot\,,t)\|^2_\infty +\|b(\cdot\,,t)\|^2_\infty\right)\|\varphi(\cdot\,,t)\|^2_{H^1(-1,1)}\,dt\\
	 &\quad+\int_\tau^T \left(|\varphi(1,t)| + |\varphi_t(1,t)|\right)|\varphi_x(1,t)|\,dt\bigg).
\end{alignedat}
	\end{equation}
	In order to conclude, we have to estimate the boundary terms.
	The first one can be easily bounded with the help of trace interpolation.
	Indeed, recall that $|\phi(1)|\leq C_{0,s}\|\phi\|_{H^{s}(-1,1)}$ for any $\phi\in H^{s}(-1,1)$, $|\phi_x(1)|\leq C_{1,s}\|\phi\|_{H^{s+1}(-1,1)}$ for any $\phi\in H^{s+1}(-1,1)$ with~$s>{1\over2}$ and, also, that the following interpolation inequality holds: $\|\phi\|_{H^r(-1,1)}\leq C_r \|\phi\|_{L^2(-1,1)}^{1-r/2} \|\phi\|^{r/2}_{H^2(-1,1)}$ for all $r\in[0,2]$.
	This gives
\begin{equation}\label{est:regaux2}
	\!\int_\tau^T|\varphi(1,t)||\varphi_x(1,t)|\,dt\leq
\delta \int_\tau^T \|\varphi(\cdot\,,t)\|^2_{H^2(-1,1)}\,dt
+ C_\delta \int_\tau^T\|\varphi(\cdot\,,t)\|^2_{2}\,dt
\end{equation}
for any $\delta>0$.
	For the second boundary term we can use \eqref{adj:Stefanplainfixed2}$_3$ and obtain:
\begin{equation}\label{est:regaux3}
\begin{alignedat}{2}
	\!\!\!\!\int_\tau^T|\varphi_x(1,t)||\varphi_{t}(1,t)|\,dt=&\int_\tau^T|\varphi_x(1,t)|\left|\sigma\left[(R(\cdot\,,t),\varphi(\cdot\,,t))_2+\sigma g(t)+{d\over dt}(N(\cdot\,,t),\varphi(\cdot\,,t))_2\right]\right|\,dt\\
	\leq& \int_\tau^T|\varphi_x(1,t)|\big|\left[(R(\cdot\,,t),\varphi(\cdot\,,t))_{2}+\sigma g(t)+(N_t(\cdot\,,t),\varphi(\cdot\,,t))_2\right]\big|\,dt \\
	&+\int_\tau^T|\varphi_x(1,t)|\big|(N(\cdot\,,t),\varphi_t(\cdot\,,t))_2\big|\,dt\\
	 \leq& ~\delta \int_\tau^T \|\varphi_{t}(\cdot\,,t)\|^2_{2}\,dt+\|g\|^2_{L^2(0,T)}+C_\delta\int_\tau^T \left(1+\|N(\cdot\,,t)\|_2^2\right)|\varphi_x(1,t)|^2\,dt \\
	 &+\int_\tau^T(\|R(\cdot\,,t)\|_2^2+\|N_t(\cdot\,,t)\|_2^2)\|\varphi(\cdot\,,t)\|^2_{2}\,dt\\
	\leq &~ \delta \int_\tau^T \left(\|\varphi_{t}(\cdot\,,t)\|^2_{2}+\|\varphi\|^2_{H^2(-1,1)}\right)\,dt +\|g\|^2_{L^2(0,T)}
	\\
	&+C_\delta\int_\tau^T\left(1+\|N(\cdot\,,t)\|_2^{16}+\|R(\cdot\,,t)\|_2^2+\|N_t(\cdot\,,t)\|_2^2\right)\|\varphi(\cdot\,,t)\|^2_{2}\,dt.
\end{alignedat}
\end{equation}
	Finally, combining $\eqref{adj:Stefanplain}_5$, \eqref{est:regaux1}, \eqref{est:regaux2} and~\eqref{est:regaux3} and using Gronwall's inequality, \eqref{ineq_fixedpointhypo} is found.

	Therefore, in view of the Leray-Schauder’s Fixed Point Theorem, we have that \eqref{adj:Stefanplain}
possesses at least one solution.

\
	Now, let us see that the solution we have found is unique.
	Let $(\varphi_1,\gamma_1)$ and~$(\varphi_2,\gamma_2)$ be two solutions (in~$H^{1,2}(Q)\times H^1(0,T)$) to~\eqref{adj:Stefanplain}.
	Let us set $\Phi=\varphi_1-\varphi_2$ and $\Gamma=\gamma_1-\gamma_2$.
	Then, $(\Phi,\Gamma)$ is a solution to
\begin{equation}\label{adj:Stefanplain_uniqueness}
\left\{
	\begin{array}{lll}
	\bar q(t)\Phi_t-\Phi_{xx}-a\Phi_x-b\Phi=0& \mbox{ in }&Q,\\
	\noalign{\smallskip}\displaystyle
	\Phi(-1,t)=0& \mbox{ in }&(0,T),\\
	\noalign{\smallskip}\displaystyle
	\Phi(1,t)=\Gamma(t)+(N(\cdot\,,t),\Phi(\cdot\,,t))_2& \mbox{ in }&(0,T),\\
	\noalign{\smallskip}\displaystyle
	\Phi(\cdot\,,T)=0& \mbox{ in }&(-1,1),\\ 
	\noalign{\smallskip}\displaystyle
	\Gamma'(t)=(R(\cdot\,,t),\Phi(\cdot\,,t))_2& \mbox{ in }&(0,T),\\
	\noalign{\smallskip}\displaystyle
	\Gamma(T)=0.
\end{array}
\right.
\end{equation}
	Proceeding as in the previous step, we obtain
\begin{equation}\label{est:regaux1_unique}
\begin{alignedat}{2}
	&\int_\tau^T\left(\|\Phi_{t}(\cdot\,,t)\|^2_{2}+\|\Phi(\cdot\,,t)\|^2_{H^2(-1,1)}\right)\,dt + \|\Phi(\cdot\,,\tau)\|_{H^1(-1,1)}^2\\
	&\quad \leq C\bigg(\int_\tau^T\left(1+\|a(\cdot\,,t)\|^2_\infty +\|b(\cdot\,,t)\|^2_\infty\right)\|\Phi(\cdot\,,t)\|^2_{H^1(-1,1)}\,dt\\
	&\qquad+ \int_\tau^T(|\Phi(1,t)|+|\Phi_{t}(1,t)|)|\Phi_x(1,t)|\,dt\bigg).
\end{alignedat}
\end{equation}
	Finally, there is no difficult to estimate  the boundary terms and apply Gronwall's inequality to deduce that $\Phi\equiv0$ and, consequently, $\Gamma\equiv0$.
	%%%%%%%%%%%%%%%%%%%
\section{Proof of Lemma~\ref{lemma:nonlocal}}\label{sec:app_C}

	For brevity, the Lebesgue integration elements $dx$ and $dt$ will be omitted in this section. On the other hand, $(\cdot\,,\cdot)$ and~$\|\cdot\|$ will stand for the usual scalar product and norm in~$L^2(Q)$.
	
	The main difficulties in the proof are that we have to work with non-local terms both in the time and the space variables.
	In order to deal with the nonlocal in time terms, we have started the computations using that the time derivatives do not exhibit nonlocal behavior in time.
	In addition, we will take advantage of the fact that the nonlocal in space terms are written on the boundary, at~$x=1$, just where $-\alpha$ and $\xi$ attain their respective minima. 

	We start by noting that
\begin{equation}\label{eq:alpha_deriv}
\begin{array}{l}\dis
	\alpha_x =-\lambda\xi\eta_x, \qquad
	\alpha_{xx}= -\lambda^2\xi\eta_x^2 -\lambda\xi\eta_{xx},\\% \qquad
	%\alpha_{xxx}= - \lambda^3\xi\eta_x^3 - 3\lambda^2\xi\eta_{xx}\eta_x-\lambda\xi\eta_{xxx},\\ 
	%\noalign{\smallskip} \dis
	%\alpha_{xxxx}= - \lambda^4\xi\eta_x^4 - 6\lambda^3\xi\eta_x^2\eta_{xx} - 3\lambda^2\xi\eta_{xx}^2-4\lambda^2\xi\eta_x\eta_{xxx}- \lambda\xi\eta_{xxxx}, \\ 
	 \noalign{\smallskip}\dis
	\alpha_t = -\xi^2\left[e^{-2\lambda\eta} - e^{-\lambda(m\|\eta\|_\infty+\eta)}\right](T-2t),\quad\ 
	\alpha_{xt} = \lambda\xi^2\eta_xe^{-\lambda(m\|\eta\|_\infty+\eta)}(T-2t) ,
	\\ \noalign{\smallskip} \dis
	\alpha_{tt}= 2\xi^2\left[e^{-2\lambda\eta}-e^{-\lambda(m\|\eta\|_\infty+\eta)}\right]+2(T-2t)^2\xi^3\left[e^{-\lambda (m\|\eta\|_\infty+3\eta)}-e^{-2\lambda(m\|\eta\|_\infty+\eta)}\right].
	\end{array}
\end{equation}
	It follows that there exists $C>0$ such that the following pointwise estimates are satisfied for sufficiently large $\lambda$ at any $(x,t) \in \overline{Q}$:
\begin{equation}\label{eq:alpha_esti}
\begin{split}
	%&|\partial^l_x\alpha|\leq~C\lambda^3\xi, \ \ \mbox{ for } 0\leq l\leq 4 , \\
	&|\alpha_t| \leq CT\xi^2,\ \ |\alpha_{xt}| \leq T\lambda\xi^2,\ \ 
	|\alpha_{tt}| \leq C(\xi^2+T^2\xi^3)\leq ~CT^2\xi^3.
\end{split}
\end{equation}
	
	We set $w=e^{-s\alpha}\psi$ and, using the boundary conditions satisfied by~$\psi$, we observe that $w(-1,\cdot)=0$.
	Also, note that, from the definitions of $\alpha$ and $w$, we get:
	$$
\lim_{t\to0^+}t^{-2}(T-t)^{-2}w(\cdot\,,t)=\lim_{t\to T^-}t^{-2}(T-t)^{-2}w(\cdot\,,t)=0
	$$
and
	$$
w_x(\cdot\,,T)=w_x(\cdot\,,0)=0.
	$$

	Let us introduce the partial differential operator
	$P:=\partial_t+d\partial_{xx}$.  We have the following decomposition
\[
e^{-s\alpha}f=e^{-s\alpha} P(e^{s\alpha} w ) = P_e\, w + P_k\,w ,
\]
	where $P_{e} w:=dw_{xx}+(s\alpha_t+s^2d\alpha_x^2)w$ is the self-adjoint part of~$P$ and $P_{k}w:=w_t+2sd\alpha_xw_x+sd \alpha_{xx}w$ is the skew-adjoint part.
	It follows that
	\begin{equation}
P_ew+(P_kw-sd \alpha_{xx}w)=e^{-s\alpha}f-sd\alpha_{xx}w 
	\end{equation}
and, consequently, 
	\begin{equation}\label{eq:L2norm}
%\begin{alignedat}{2}
\|e^{-s\alpha}f-sd\alpha_{xx}w\|^2=%&~
\|P_e w\|^2+\|P_kw-sd \alpha_{xx}w\|^2%\\ &
+2(P_e w,P_kw-sd \alpha_{xx}w).
%\end{alignedat}
	\end{equation}

	The rest of the proof is devoted to analyzing the term $(P_e w,P_kw-sd \alpha_{xx}w)$.
	Thus, from the above definition of the operators $P_e$ and $P_k$, it follows that
\begin{equation}\label{eq:pepk}
\begin{alignedat}{2}
	2(P_e w,P_kw-sd \alpha_{xx}w) &=	~2\left(dw_{xx}, w_t\right)+2\left(dw_{xx},2sd\alpha_x w_x\right)
						\\
						&+2\left(s\alpha_tw+s^2d\alpha_x^2w,w_t\right)
						+2\left(s\alpha_tw+s^2d \alpha _x^2w,2sd\alpha_x w_x\right)\\
	&=:~I_1 + I_2 + I_3 + I_4.
\end{alignedat}
\end{equation}
	
	For the first integral term $I_1$, we integrate by parts in space and obtain that
\begin{equation}\label{eq:I_1}
\begin{alignedat}{2}
	I_1=&\,- 2\jjnt_Q dw_{x}w_{xt}+2\int_0^T
	      \left[dw_tw_x\right]_{x=-1}^{x=1}\\
= &\,\jjnt_Q d_tw_{x}^2-\int_{-1}^1 \left[dw_{x}^2 \right]_{t=0}^{t=T}
+2 \int_0^T\left[dw_t w_x\right]_{x=-1}^{x=1}.
\end{alignedat}
\end{equation}
	For the second one, we integrate again by parts in space and deduce that
\begin{equation}\label{eq:I_2}
\begin{alignedat}{2}
	I_2
	=&~-2s\jjnt_Q d^2\alpha_{xx}|w_x|^2 +2s\int_0^T\left[d^2 \alpha_x|w_x|^2\right]_{x=-1}^{x=1}.
\end{alignedat}
\end{equation}
	For the third term, we integrate by parts in time.
	The following is found:
\begin{equation}\label{eq:I_4}
\begin{alignedat}{2}
	I_3=&~  -s\jjnt_Q\alpha_{tt} |w|^2-s^2\jjnt_Q (d\alpha_x^2)_t |w|^2 +\int_{-1}^1 \left[(s\alpha_t+s^2d\alpha_x^2)|w|^2\right]_{t=0}^{t=T}.
\end{alignedat}
\end{equation}
	Then, for the fourth term, we see that
\begin{equation}\label{eq:I_5}
	I_4= -\jjnt_Q d\left(2s^2(\alpha_t\alpha_{x})_x+6s^3d\alpha_x^2\alpha_{xx}\right)|w|^2+2\int_0^T\left[ d (s^2\alpha_t\alpha_x+s^3d\alpha_x^3)  |w|^2\right]_{x=-1}^{x=1}.
\end{equation}

	From \eqref{eq:pepk}-\eqref{eq:I_5}, we  get:
\[
\begin{alignedat}{2}
	2(P_e w,P_kw-sd \alpha_{xx}w) &=~\jjnt_Q (-2sd^2\alpha_{xx}+d_t)|w_x|^2 \\
	&+ \jjnt_Q\left(-s\alpha_{tt}-s^2(d\alpha_x^2)_t-2s^2d(\alpha_x\alpha_{t})_x-6s^3d^2\alpha_x^2\alpha_{xx} \right)|w|^2\\
	&+\int_0^T\!\!\!\!\left[2dw_tw_x+2sd^2\alpha_x |w_x|^2 + 2s^2d\alpha_x(\alpha_t+sd\alpha_x^2)|w|^2
	 \right]_{x=-1}^{x=1}\\
	&-\int_{-1}^1 \left[dw_{x}^2 -\left(s\alpha_t+s^2d\alpha_x^2\right)|w|^2\right]_{t=0}^{t=T}\\
	%&=~\,dt1+\,dt2+BT+I\,dt,
	&=~I_{D1}+I_{D2}+I_{BS}+I_{BT} ,
\end{alignedat}
\]
	where $I_{D1}$ and $I_{D2}$ correspond to distributed terms,  $I_{BS}$ is related to the boundary terms and $I_{BT}$ contains initial and final terms.
	Obviously, $I_{BT}=0$.
	
	Let us estimate the distributed terms.
	Thanks to~\eqref{eq:alpha_deriv} and from \eqref{P1}, we have
\begin{align*}
	I_{D1}=&~2s\lambda^2\jjnt_Q d^2\eta_x^2\xi |w_x|^2
	+2s\lambda\jjnt_Q d^2\eta_{xx}\xi |w_x|^2+\jjnt_Q d_t|w_x|^2\\
	\geq&~Cs\lambda^2\jjnt_Q \xi |w_x|^2 
		-Cs\lambda^2\iio \xi |w_x|^2 
	-C\left(s\lambda\jjnt_Q \xi |w_x|^2+\jjnt_Q |w_x|^2\right).
\end{align*}
	Hence,  using the fact that $(s\xi)^{-1}\leq 1/(4s_0)$ and $\lambda\geq \lambda_0$ and taking $s_0$ and $\lambda_0$ large enough, we obtain:
\begin{equation}\label{eq:dt1}
	Cs\lambda^2\iio \xi |w_x|^2 + I_{D1}\geq Cs\lambda^2\jjnt_Q \xi |w_x|^2.
\end{equation}

	Also, in order to get an estimate for $I_{D2}$, we use \eqref{P1}, \eqref{eq:alpha_deriv}, \eqref{eq:alpha_esti} and the fact that $s\geq s_0(T+T^2)$ and 
	$\lambda\geq\lambda_0$.
	This gives:
\begin{equation}\label{eq:dt2}
	 Cs^3\lambda^4\iio\xi^3 |w|^2+ I_{D2}
	\geq Cs^3\lambda^4\jjnt_Q\xi^3 |w|^2.
\end{equation}	

	Finally, let us estimate the integral containing boundary terms.
	Recalling that  $w(-1,\cdot)=0 $ in $(0,T)$, we deduce that:
\begin{equation}\label{eq:inner}
\begin{split}	
	I_{BS}&=2s^2\int_0^T d\alpha_x(\alpha_t+sd\alpha_x^2)|w|^2\big|_{x=1}
	+2s\int_0^T\left[d^2\alpha_x |w_x|^2\right]_{x=-1}^{x=1}+2\int_0^Tdw_tw_x\big|_{x=1}\\
	&=: I_{BS1}+I_{BS2}+I_{BS3}.
\end{split}
\end{equation}
	%where $BT_1$, $BT_2$ and $BT_3$ correspond to the boundary terms.
	Thanks to \eqref{eq:alpha_deriv}, \eqref{eq:alpha_esti} and the fact that $s\geq s_0(T+T^2)$ and $w_t=-s\alpha_tw+e^{-s\alpha}\psi_t$, we see that
\begin{align*}
	I_{BS1}\geq&-2s^3\lambda^3\int_0^Td^2\eta_x^3\widehat\xi^3 |w|^2\big|_{x=1} 
	-Cs^3\lambda\int_0^T\widehat\xi^3|w|^2\big|_{x=1} ,\\
	I_{BS2}=&-2s\lambda\int_0^T\left[d^2\eta_x\widehat\xi |w_x|^2\right]_{x=-1}^{x=1} ,\\
	%BT_3\geq&~2\int_0^Tw_tBw_x\big|^{x=1} -2s\int_0^TB^2\xi (\lambda^2\eta_x^2 +\lambda\eta_{xx}) ww_x \big|_{x=1}.
	I_{BS3}\geq&~2\int_0^Td\psi_tw_xe^{-s\widehat\alpha}\big|_{x=1}-Cs^3\int_0^T\widehat\xi^3 |w|^2 \big|_{x=1}-Cs\int_0^T\widehat\xi |w_x|^2 \big|_{x=1}.
\end{align*}	
	Using again~\eqref{P1}, the fact that $(s\xi)^{-1}\leq 1/(4s_0)$ and $\lambda\geq \lambda_0$, taking $s_0$ and $\lambda_0$ large enough and recalling the Cauchy-Schwarz inequality, we find from the previous estimate that
	\begin{equation}\label{eq:inner2}
	I_{BS}\geq C\int_0^T\!\left(s^3\lambda^3\widehat\xi^3 |w|^2 \!+\! s\lambda\widehat\xi |w_x|^2\right)\!\big|_{x=1} \!+\! Cs\lambda\int_0^T\!\widehat\xi |w_x|^2\big|_{x=-1}
\!-\! C\int_0^T(s\lambda\widehat\xi)^{-1}e^{-2s\widehat\alpha}|\psi_t|^2\big|_{x=1}.
	\end{equation}

	From \eqref{eq:L2norm},  \eqref{eq:dt1}, \eqref{eq:dt2} and \eqref{eq:inner2} and  the fact that $(s\xi)^{-1}\leq 1/(4s_0)$ and $\lambda\geq \lambda_0$, taking $s_0$ and $\lambda_0$ large enough, we conclude that
\begin{equation}\label{eq:concl1}
\begin{alignedat}{2}
	&\|P_e w\|^2+\|P_kw-sd \alpha_{xx}w\|^2\\
	&~~~+s^3\lambda^4\jjnt_Q\xi^3 |w|^2 \!+\! s\lambda^2\jjnt_Q \xi |w_x|^2 \!+\! \int_0^T\left(s^3\lambda^3\widehat\xi^3 |w|^2 \!+\! s\lambda\widehat\xi |w_x|^2\right)\big|_{x=1}
	\!+\! s\lambda\int_0^T\widehat\xi |w_x|^2\big|_{x=-1}\\
	& \leq  C\bigg(\|e^{-s\alpha} f\|^2_{2} 
	\!+\! s^3\lambda^4\iio\xi^3|w|^2 \!+\! s\lambda^2\iio \xi	|w_x|^2
	\!+\! \int_0^T(s\lambda\widehat\xi)^{-1}e^{-2s\widehat\alpha}|\psi_t|^2\big|_{x=1}\bigg).
\end{alignedat}
\end{equation}
	
	Now, using that $P_{e} w=w_{xx}+(s\alpha_t+s^2\alpha_x^2)w$, we get:
\begin{equation}\label{eq:laplac}
\begin{alignedat}{2}
	s^{-1}\jjnt_Q\xi^{-1} |w_{xx}|^2
	=&~ s^{-1}\jjnt_Q\xi^{-1}  |P_e w-(s\alpha_t+s^2\alpha_x^2)w|^2\\
	\leq&~ C s^{-1}\jjnt_Q\xi^{-1}\left(  |P_e w|^2+ s^2\lambda^2\xi^4|w|^2+s^4\lambda^4\xi^4|w|^2\right)\\
	\leq&~ C\left( s^{-1}\jjnt_Q\xi^{-1} |P_e w|^2+\jjnt_Q s^3\lambda^4\xi^3|w|^2\right).
\end{alignedat}
\end{equation}

	We can do the same for $P_{k}w-sd \alpha_{xx}w=w_t+2s\alpha_xw_x$.
	Then,
\begin{equation}\label{eq:time}
\begin{alignedat}{2}
	s^{-1}\jjnt_Q\xi^{-1} |w_t|^2
	=&~ s^{-1}\jjnt_Q\xi^{-1}  |(P_{k}w-sd \alpha_{xx}w)-2s\alpha_xw_x|^2\\
	\leq&~ C s^{-1}\jjnt_Q\xi^{-1}\left(  |P_{k}w-sd \alpha_{xx}w|^2+ s^2\lambda^2\xi^2|w_x|^2\right)\\
	\leq&~ C\left( s^{-1}\jjnt_Q\xi^{-1} |P_{k}w-sd \alpha_{xx}w|^2+\jjnt_Q s\lambda^2\xi |w_x|^2\right).
\end{alignedat}
\end{equation}

	From \eqref{eq:concl1}, \eqref{eq:laplac} and~\eqref{eq:time}, by introducing a cut-off function to estimate the local gradient integral and performing the usual integration by parts, the following holds
\begin{equation}\label{eq:concl4}
\begin{alignedat}{2}
	&~\jjnt_Q s^{-1}\xi^{-1} (|w_t|^2+ |w_{xx}|^2)+ \jjnt_Q s\lambda^2\xi |w_x|^2 
	+s^3\lambda^4\jjnt_Q\xi^3 |w|^2\\
	~&\ \ \ \ +s\lambda\int_0^T\widehat\xi |w_x|^2\big|_{x=-1}+\int_0^T\left(s^3\lambda^3\widehat\xi^3 |w|^2+s\lambda\widehat\xi |w_x|^2\right)\big|_{x=1} \\
	~ &\ \ \ \ \leq C\left(\|e^{-s\alpha} f\|^2_{L^2(Q)} + s^3\lambda^4\iil\xi^3|w|^2+s^{-1}\lambda^{-1}\int_0^T\xi^{-1}e^{-2s\alpha}|\psi_t|^2\big|_{x=1}\right).
\end{alignedat}
\end{equation}

	Notice that $w_x\big|_{x=-1}=e^{-s\widehat\alpha} \psi_x\big|_{x=-1}$ since $\psi(-1,\cdot)=0$ and $w_x\big|_{x=1}=e^{-s\widehat\alpha} \psi_x\big|_{x=1}+s\lambda\widehat\xi\eta_x w\big|_{x=1}$. Thus, we can come back to $\psi$ and deduce that
\begin{equation}\label{eq:concl2}
	I(s,\lambda,\psi)
	~\leq C\left(\jjnt_Q e^{-2s\alpha} |f|^2+s^3\lambda^4\iil e^{-2s\alpha} \xi^3 |\psi|^2
	+s^{-1}\lambda^{-1}\int_0^T\widehat\xi^{-1}e^{-2s\widehat\alpha} |\psi_t|^2\big|_{x=1}\right),
\end{equation}
where we have set
\begin{equation*}
\begin{split}
I(s,\lambda,\psi):=& \jjnt_Qe^{-2s\alpha}\left[ (s\xi)^{-1} (|\psi_t|^2+|\psi_{xx}|^2)+s\lambda^2\xi |\psi_x|^2+s^3\lambda^4\xi^3 |\psi|^2\right]+ s^3\lambda^3\int_0^Te^{-2s\widehat\alpha}\widehat\xi^3 |\psi|^2\big|_{x=1}\\
&+s\lambda\int_0^Te^{-2s\widehat\alpha}\widehat\xi |\psi_x|^2 \big|_{x=-1}
+s\lambda\int_0^Te^{-2s\widehat\alpha}\widehat\xi |\psi_x|^2 \big|_{x=1}.
\end{split}
\end{equation*}

	To conclude the proof, we have to eliminate the last term in~\eqref{eq:concl2}. 
	Using \eqref{eq:adj_1}$_{3,5}$, we find that
$$
	 \psi_t\big|_{x=1} =(R(\cdot\,,t)+N_t(\cdot\,,t),\psi(\cdot\,,t))_2+(N(\cdot\,,t),\psi_t(\cdot\,,t))_2
	 +g.
$$
	Then, using the fact that $R\in L^\infty(0,T;L^2(-1,1))$ and $N\in W^{1,\infty}(0,T;L^2(-1,1))$ and performing some immediate estimates, we obtain:
	\begin{equation}\label{eq:concl22}
\begin{alignedat}{2}
	\int_0^T(s\lambda\widehat\xi)^{-1}e^{-2s\widehat\alpha}|\psi_t|^2\big|_{x=1}\leq &~	C\jjnt_Q(s\lambda\widehat\xi)^{-1}e^{-2s\widehat\alpha}(|\psi|^2+|\psi_t|^2)
+\int_0^T(s\lambda\widehat\xi)^{-1}e^{-2s\widehat\alpha}|g|^2.
\end{alignedat}
\end{equation}

	Note that $\widehat\xi(t)^{-1} e^{-2s\widehat\alpha(t)}\leq \xi(x,t)^{-1} e^{-2s\alpha(x,t)}$ for all $(x,t)\in Q$.
	Accordingly, we have from \eqref{eq:concl22} that
\[
\begin{alignedat}{2}
	s^{-1}\lambda^{-1}\int_0^T\widehat\xi^{-1}e^{-2s\widehat\alpha}|\psi_t|^2\big|_{x=1}\leq &~	Cs^{-1}\lambda^{-1}\jjnt_Q\xi^{-1}e^{-2s\alpha}(|\psi|^2+|\psi_t|^2)
	+s^{-1}\lambda^{-1}\int_0^T\widehat\xi^{-1}e^{-2s\widehat\alpha}|g|^2\\
	\leq &~	C\lambda_0^{-1}s^{-1}\jjnt_Q\xi^{-1}e^{-2s\alpha}|\psi_t|^2+{ {C\over 256\,s_0^4\lambda_0^5}}s^3\lambda^4 \jjnt_Q\xi^{3}e^{-2s\alpha}|\psi|^2
	\\&+s^{-1}\lambda^{-1}\int_0^T\widehat\xi^{-1}e^{-2s\widehat\alpha}|g|^2 .
\end{alignedat}
\]
	This estimate, used together with \eqref{eq:concl2} and taking $s_0$ and $\lambda_0$ large enough, leads to \eqref{carleman:1}. This ends the proof.
%%%%%%%%%%%%%%%%%%%
\section{Proof of Proposition~\ref{prop:sol_adj_7}}\label{sec:app_B}

	The proof of existence can be achieved via the {\it Faedo-Galerkin method.}
	It will be divided into several steps.

{\sc 1. Galerkin approximations}
	
	Let $\{w_1, w_2, \dots \}$ be the ``special'' basis of $H^1_0(-1,1)$, formed by the eigenfunctions of the Dirichlet Laplacian, ortogonal in this space and orthonormal in $L^2(-1,1)$.
	For each $n\geq 1$, we will look for a pair 
	$(z_n,h_n):[0,T]\mapsto H^1_0(-1,1)\times \mathbb{R}$ with
\begin{equation}\label{eq:y_p_n}
	z_n(t)=\sum_{k=1}^na_n^k(t)w_k
\end{equation}
	satisfying
\begin{equation}\label{eq:z}
\left\{
		\begin{array}{ll}
			\bar q(z'_n,w_k)_2 \!+\! (z_{n,x}, w_{k,x})_2 \!+\! (a z_{n,x}+ h_nR,w_k)_2
			\!+\! z_{n,x}(1,\cdot)(N,w_k)_2 \!=\! (F,w_k)_2 \ \ \forall k = 1,\dots,n\\
			\noalign{\smallskip}\dis
			 h_n'(t) + z_{n,x}(1,t) = G,	\quad (0\leq t\leq T),
		\end{array}
		\right.
\end{equation}	
\begin{equation}\label{eq:y_p_0}
	a_n^k(0)=(z_0,w_k)\ \ (k=1,\ldots,n) \ \hbox{ and } \ h_n(0)=h_0.
\end{equation}

	Obviously, \eqref{eq:z}--\eqref{eq:y_p_0} is a Cauchy problem for a first order linear system of ODEs.
	Consequently, the existence and uniqueness of absolutely continuous functions~$a_n^k$ and~$h_n$ on~$[0,T]$ is ensured.

{\sc 2. A priori estimates}

	Now, the goal is to get some uniform estimates of the couples~$(z_n, h_n)$.
	To do this, let us multiply the first equation of \eqref{eq:z} by $a_n^k(t)$ and sum from $1$ to $n$.
	Let us also multiply the second equation by~$h_n$.
	The following is found in~$(0,T)$:
\[
	\left\{
		\begin{array}{ll}
			\bar q( z'_n,z_n)_2+(z_{n,x},z_{n,x})_2 
			+ (a z_{n,x},z_n)_2+h_n(R,z_n)_2
			+ z_{n,x}(1,\cdot)(N,z_n)_2=(F,z_n)_2, \\
			 \noalign{\smallskip}\dis
			 h_n'(t)h_n(t)+z_{n,x}(1,t)h_n(t)=Gh_n(t).
		\end{array}
		\right.
\]

	Summing these identities, we easily obtain:
\[
	\begin{array}{l} \dis
	\dis{d\over dt}\left(\bar q\| z_n(t)\|_2^2+|h_n|^2\right)+\|z_{n,x}\|_2^2\leq %%{\|\bar q'\|_\infty\over 2}+{\|A\|^2_{\infty}\over2} +\|B\|^2_\infty
	2|z_{n,x}(1,t)|^2+(|\bar q'|+\|a(\cdot\,,t)\|^2_2+\|R(\cdot\,,t)\|^2_2+\|N(\cdot\,,t)\|^2_2)\| z_n(t)\|^2_2
	\\
	 \noalign{\smallskip}\dis \dis \phantom{{1\over 2}\dis{d\over dt}\left(\bar q\| z_n(t)\|_2^2+\beta|h_n|^2\right)+{1\over 2}\|z_{n,x}\|_2^2}
	+3|h_n(t)|^2+\|F(\cdot\,,t)\|^2_2+|G(t)|^2.
	\end{array}
\]
	Let us also multiply the first $n$ identities of \eqref{eq:z} by the corresponding $a_{n,t}^k(t)$ and $\lambda_ka_{n}^k(t)$ and let us sum from $1$ to $n$.
	Then,
\[\left\{	\begin{array}{ll}
			\bar q\|z'_n\|^2_2+(z_{n,x},z'_{n,x})_2 
			+ (a z_{n,x},z'_n)_2+h_n(R,z'_n)_2
			+ z_{n,x}(1,\cdot)(N,z'_n)_2=(F,z'_n)_2,\\
			 \noalign{\smallskip}\dis
			\bar q( z'_{n,x},z_{n,x})_2 \!+\! \|z_{n,xx}\|^2 
			\!+\! (a z_{n,x},-z_{n,xx})_2 \!+\! h_n(R,-z_{n,xx})_2
			\!+\! z_{n,x}(1,\cdot)(N,-z_{n,xx})_2 \!=\! (F,-z_{n,xx})_2.
		\end{array}
		\right.
\]

	After summing, we see that 
\[
		\begin{alignedat}{2}
	&\dis{d\over dt}\left[\bar q(\| z_n(t)\|^2_2+\| z_{n,x}(t)\|^2_2)+\| z_{n,x}(t)\|^2_2+|h_n|^2\right]+(\|z_{n,x}(t)\|^2_2+\bar q\|z'_n(t)\|^2_2+\|z_{n,xx}(t)\|^2_2)\\
	 \noalign{\smallskip}\dis
	&\ \leq %%{\|\bar q'\|_\infty\over 2}+{\|A\|^2_{\infty}\over2} +\|B\|^2_\infty
	C(1+|\bar q'|+\|a(\cdot\,,t)\|^2_2+\|R(\cdot\,,t)\|^2_2+\|N(\cdot\,,t)\|^2_2)(\| z_n(t)\|^2_{H^1_0(-1,1)}+|h_n(t)|^2)\\
	&\quad +C(1+\|N(\cdot\,,t)\|^2_2)|z_{n,x}(1,t)|^2+C(\|F(\cdot\,,t)\|^2_2+|G(t)|^2)\\
	 \noalign{\smallskip}\dis
	&\ \leq %%{\|\bar q'\|_\infty\over 2}+{\|A\|^2_{\infty}\over2} +\|B\|^2_\infty
	\delta \|z_{n,xx}(t)\|^2_2+C(\|F(\cdot\,,t)\|^2_2+|G(t)|^2)\\
	&\quad +C(1+|\bar q'|+\|a(\cdot\,,t)\|^2_2+\|R(\cdot\,,t)\|^2_2+\|N(\cdot\,,t)\|^{16}_2+\|N(\cdot\,,t)\|^2_2)(\| z_n(t)\|^2_{H^1_0(-1,1)}+|h_n(t)|^2),
		\end{alignedat}
\]
	whence
\[
		\begin{alignedat}{2}
	&\dis{d\over dt}\left[\bar q(\| z_n(t)\|^2_2+\| z_{n,x}(t)\|^2_2)+\| z_{n,x}(t)\|^2_2+|h_n|^2\right]+(\|z_{n,x}(t)\|^2_2+\bar q\|z'_n(t)\|^2_2+\|z_{n,xx}(t)\|_2^2)\\
		&\leq %%{\|\bar q'\|_\infty\over 2}+{\|A\|^2_{\infty}\over2} +\|B\|^2_\infty
	C(\|F(\cdot\,,t)\|_2^2+|G(t)|^2) \\
	&\quad +C(1+|\bar q'|+\|a(\cdot\,,t)\|^2_2+\|R(\cdot\,,t)\|^2_2+\|N(\cdot\,,t)\|^{16}_2+\|N(\cdot\,,t)\|^2_2)(\| z_n(t)\|^2_{H^1_0(-1,1)}+|h_n(t)|^2).
		\end{alignedat}
\]

	Then, from  Gronwall's Lemma, we deduce that
\begin{equation}\label{est_z_infty}
	\| z_n\|^2_{L^\infty(0,T;H^1_0(-1,1))}+|h_n|^2_{L^\infty(0,T)}\leq e^{C(1+T)}
	\left(\|z_0\|_{H_0^1(-1,1)}^2+\|f_1\|^2_{L^2(Q)}+\|f_2\|^2_{L^2(0,T)}\right).
\end{equation}
	Therefore, one has:
\begin{equation}\label{est_z_infty_total}
	\| z_n\|^2_{H^{1,2}_{0}(Q)}+|h_n|^2_{H^1(0,T)}\leq e^{C(1+T)}\left(\|z_0\|_{H_0^1(-1,1)}^2+\|F\|^2_{L^2(Q)}+\|G\|^2_{L^2(0,T)}\right).
\end{equation}

{\sc 3. The existence of a strong solution}

	Let us take limits in (a subsequence of) the sequence $(z_n,h_n)$.
	
	In view of the a priori estimates \eqref{est_z_infty} and \eqref{est_z_infty_total}, there exists a subsequence (again indexed by~$n$) and functions $h\in L^2(0,T)$ and $z\in H^{1,2}_{0}(Q)$ such that
\[
	\begin{alignedat}{2}
			z_{n}\to z &\quad \hbox{weakly in}\quad L^2(0,T;H_0^1(-1,1)\cap H^2(-1,1)),\\
			z_{n,x}(1,\cdot)\to z_x(1,\cdot) &\quad \hbox{weakly in}\quad L^2(0,T),\\
			z_{n}'\to z_t& \quad \hbox{weakly in}\quad L^2(Q),\\
			h_{n}\to h& \quad \hbox{weakly in}\quad H^1(0,T).
	\end{alignedat}
\]

	Then, following standard and well known arguments, we can deduce that $(z,h)$ satisfies \eqref{eq:conStefantransnulllinear1}$_1$,  \eqref{eq:conStefantransnulllinear1}$_2$,  \eqref{eq:conStefantransnulllinear1}$_3$, \eqref{eq:conStefantransnulllinear1}$_5$ and~\eqref{eq:conStefantransnulllinear1}$_6$.

{\sc 4. Checking the initial conditions} 

	Thanks to the well known {\it Aubin-Lions' Lemma,} we have that $H^{1,2}_{0}(Q)$ is compactly embedded in $C^0([0,T]; L^2(-1,1))$. 
	Then, since $z_{n}\to z$ weakly in $H^{1,2}_{0}(Q)$, we also have that
	$$
z(\cdot\,,0)=\dis\lim_{n\to\infty}z_{n}(\cdot\,,0)=z_0.
	$$
	Similarly, since $h_{n}$ converges weakly in~$H^1(0,T)$, we deduce that
	$$
h(0)=\dis\lim_{n\to\infty}h_{n}(0)=h_0.
	$$

\

	The uniqueness of the solution is an almost direct consequence of energy estimates.
	Indeed, let $(z_1,h_1)$ and~$(z_2,h_2)$ be two solutions (in $H^{1,2}_{0}(Q)\times H^1(0,T)$) to~\eqref{eq:conStefantransnulllinear1} and let us set $z=z_1-z_2$ and $h=h_1-h_2$.
	Then, $(z,h)$ is a solution to
\[
\left\{
	\begin{array}{lcl}
		\bar q z_t- z_{xx}+az_x+ Rh+Nz_x(1,\cdot)=0	& \mbox{ in }&Q,\\
 		z(-1,\cdot)= 0								& \mbox{ in }&(0,T),\\
		z(1,\cdot)=0								& \mbox{ in }&(0,T),\\
		z(\cdot\,,0)= 0							& \mbox{ in }&(-1,1),\\
		h_t+ z_x(1,\cdot)=0					& \mbox{ in }& (0,T),\\
		h(0)= 0&&
	\end{array}
	\right.
\]
	and, from well known arguments, this shows that $z \equiv 0$ and~$h \equiv 0$.

\end{appendices}

\bibliographystyle{alpha}
\nocite{*}
%\bibliography{Stefantwophas}

\end{document}